\newtheorem{theorem}{Theorem}[section]
\newtheorem{lemma}[theorem]{Lemma}
\newtheorem{proposition}{Proposition}[section]
\theoremstyle{definition}
\newtheorem{definition}[theorem]{Definition}
\theoremstyle{remark}
\newtheorem{remark}[theorem]{Remark}
\numberwithin{equation}{section}
\newcommand{\rr}{\mathbb R}
\newcommand{\rrr}{\mathbb R^3}
\newcommand{\bq}{\begin{equation}}
\newcommand{\eq}{\end{equation}}
\newcommand{\mm}{\mathcal{M}_{\nu}}
\begin{document}

\title[Stationary Flows of the ES-BGK model in a slab ]{Stationary Flows of the ES-BGK model with the correct Prandtl number} 

\author{Stephane Brull}
\address{Institut de Mathématiques de Bordeaux UMR 5251
Université Bordeaux
351, cours de la Libération
33405 TALENCE cedex FRANCE  }
\email{Stephane.Brull@math.u-bordeaux1.fr }
\author{Seok-Bae Yun}
\address{Department of Mathematics, Sungkyunkwan University, Suwon 440-746, Republic of Korea}
\email{sbyun01@skku.edu}

\subjclass[2010]{35Q20,82C40,35A01,35A02,35F30}


\keywords{Ellipsoidal BGK model, Boltzmann equation, Kinetic theory of gases, Boundary value problem in a slab;
mixed boundary conditions}

\begin{abstract}
Ellipsoidal BGK model (ES-BGK) is a generalized version of the BGK model  where the local Maxwellian in the relaxation operator of the BGK model is extended to an ellipsoidal 
Gaussian with a Prandtl parameter $\nu$, so that the correct transport coefficients can be computed in the Navier-Stokes limit. 
In this work, we consider the existence and uniqueness of stationary solutions for the ES-BGK model in a slab imposed with the mixed boundary conditions.
One of the key difficulties arise in the uniform control of the temperature tensor from below.  
In the non-critical case $(-1/2<\nu<1)$, we utilize the property that the temperature tensor is
equivalent to the temperature in this range. 
In the critical case, $(\nu=-1/2)$, where such equivalence relation breaks down, we observe that the size of bulk velocity in $x$ direction can be controlled by the discrepancy of boundary flux, 
which enables one to bound the temperature tensor from below. 
\end{abstract}
\maketitle
\section{introduction}
\subsection{Ellipsoidal BGK model:} The Boltzmann equation is a fundamental model that connects the particle regime  and the fluid regime of rarefied gases. It has, however, not been as practical an equation as it is a fundamental equation in that the application of the Boltzmann equation to various flow problems has
been severely restricted by the intricate structure of the collision operator that requires a serious amount of resource for numerical computations.
The observation made by Bhatnaghar, Gross and Krook \cite{BGK} in their attempts to overcome this difficulty is that
the local equilibration occurs rather quickly, so that the complicated process of collision can be successfully described by the relaxation process after a short time scale. 
The equation that was introduced based on this observation is now called the BGK model, and  have enjoyed a great popularity as a numerically amenable equation that provides qualitatively satisfactory results.
There are, however, several shortcommings of the model. Most notable one is that the Prandtl number - the ratio between the thermal diffusivity and the viscosity,  computed from the BGK model does not
match the correct value computed from the Boltzmann equation, which means that the diffusivity and 
the viscosity in the Navier-Stokes limit cannot be correctly derived. 
In this regards, Holway proposed so-called the ellipsoidal BGK model (ES-BGK model), which generalizes the local Maxwellian of the BGK model to an ellipsoidal Gaussian endowed with an additional degree of freedom in adjusting the transport coefficients.
ES-BGK model, however, was somewhat forgotten in the literature since it was not clear at the time whether the H-theorem holds for this model. This was resolved  by Andries et al in \cite{ALPP} (and later in \cite{B3,PY3}), which greatly popularized this model in the study of various problems in the rarefied gas dynamics. The existence result of the ES-model in the critical case ($\nu=-1/2$), however, was never
made so far except for the case where the solution lies close to equilibrium \cite{Yun22, Yun3}, which is the main motivation of the current work.

More precisely, we are interested in the boundary value problem of the stationary ellipsoidal BGK model:
\begin{equation}\label{ESBGK}
  v_{1} \frac{\partial f}{\partial x} =\frac{1}{\kappa(1-\nu)}\big( \mathcal{M}_{\nu}(f)-f \big),
\end{equation}
on a finite interval $[0,1]$ where the boundary condition is given by the linear combination of 
the inflow boundary condition, the diffusive boundary condition, and the specular reflection $(\delta_1+\delta_2+\delta_3=1)$:
\begin{align}\label{boundary conditions}
\begin{split}
f(0,v)&=\delta_1f_L(v)+\delta_2\left(\int_{|v_1|<0}f(0,v)|v_1|dv\right)M_w(0,v)+\delta_3f(0,Rv),\quad (v_1>0)\cr
f(1,v)&=\delta_1f_R(v)+\delta_2\left(\int_{|v_1|>0}f(1,v)|v_1|dv\right)M_w(1,v)+\delta_3 f(1,Rv).\quad (v_1>0)
\end{split}
\end{align}
Here $M_w$ denotes the wall Maxwellians which, for a given wall temperature $T_w:\{0,1\}\rightarrow\mathbb{R}_+$, is defined by
\[
M_w(i,v)=\frac{1}{\sqrt{2\pi T_w(i)}}e^{-\frac{|v|^2}{2T_w(i)}}.\qquad (i=1,2)
\]
When there's no risk of confusion we denote both $M_w(0,v)$ and $M_w(1,v)$ by $M_w$. $Rv$ denotes the reflection of $v$: $R(v_1,v_2,v_3)=(-v_1,v_2,v_3)$. 
  We note that $\delta_1$ term and $\delta_2$ term corresponds to the condensation and the evaporation
  at the boundary \cite{SAD}.

The velocity distribution function $f(x,v)$ represents the number density of the gas molecules at the position $x\in[0,1]$
with the microscopic velocity $v=(v_1,v_2,v_3)\in\mathbb{R}^3$.  $\kappa$ is the Knudsen number. The ellipsoidal Gaussian $\mathcal{M}_{\nu}(f)$ with the Prandtl parameter $\nu\in [-1/2,1)$ reads
\begin{eqnarray*}
\mathcal{M}_{\nu}(f)=\frac{\rho}{\sqrt{\det(2\pi \mathcal{T}_{\nu})}}\exp\left(-\frac{1}{2}(v-U)^{\top}\mathcal{T}^{-1}_{\nu}(v-U)\right).
\end{eqnarray*}
The local density $\rho$, momentum $U$, temperature $T$ and the stress tensor $\Theta$ are given by the following relations:
\begin{eqnarray}\label{macroscopic field}
\begin{split}
\rho(x)&=\int_{\mathbb{R}^3}f(x,v)dv,\cr
\rho(x)U(x)&=\int_{\mathbb{R}^3}f(x,v)vdv,\cr
3\rho(x) T(x)&=\int_{\mathbb{R}^3}f(x,v)|v-U|^2dv,\cr
\rho(x)\Theta(x)&=\int_{\mathbb{R}^3}f(x,v)(v-U)\otimes(v-U)dv,
\end{split}
\end{eqnarray}
and the temperature tensor $\mathcal{T}_{\nu}$ is defined as a linear combination of the temperature and the stress tensor:
\begin{align*}
\mathcal{T}_{\nu}
&=(1-\nu) T \mathbb{I}+\nu\Theta\cr
&=\left(
\begin{array}{ccc}
(1-\nu) T+\nu\Theta_{11}&\nu\Theta_{12}&\nu\Theta_{13}\cr
\nu\Theta_{21}&(1-\nu)T+\nu\Theta_{22}&\nu\Theta_{23}\cr
\nu\Theta_{31}&\nu\Theta_{32}&(1-\nu) T+\nu\Theta_{33}
\end{array}
\right)
\end{align*}
where $\mathbb{I}_3$ denotes the $3\times 3$  identity matrix. 

Note that this is not a convex combination since $\nu$ can take negative values.
In the case $\nu=0$, the ES-BGK model reduces to the original BGK model. The end-point $\nu=-1/2$ corresponds to ES-BGK model with the correct Prandtl number: The Prandtl number
computed using the ES-BGK model with $\nu=-1/2$ matches with the correct value computed from the Boltzmann equation.
For simplicity, we set $\tau=\kappa(1-\nu)$ throughout the paper and write (\ref{ESBGK}) as
\begin{equation*}
v_{1} \frac{\partial f}{\partial x} =\frac{1}{\tau}\big( \mathcal{M}_{\nu}(f)-f \big).
\end{equation*}
%
%
%
%
%
\subsection{Notations:}

%
%
%
%
We first set up notational conventions and define norms:
\begin{itemize}
\item $C$ denote generic constants. The value can change each line of computations, but it 
is explicitly computable in principle. 
\item $A\preceq B$ means that $A\leq C B$ for some constant $C$. 
\item $\mathbb{I}_3$ denotes the $3\times 3$  identity matrix.
\item We define $f_{LR}$ and $M_w$ by
\[f_{LR}(v)=f_{L}(v)1_{v_1>0}+f_{R}(v)1_{v_1<0}\]
and
\[M_w(v)=M_w(0,v)1_{v_1>0}+M_w(1,v)1_{v_1<0}.\]
\item We define $\sup_x\|\cdot\|_{L^1_2}$ by
\begin{align*}
\sup_x\|f\|_{L^1_2}&=\sup_x\|f\|_{L^1_{2,+}}+\sup_x\|f\|_{L^1_{2,-}},
\end{align*}
where
\begin{align*}
	\sup_x\|f\|_{L^1_{2,+}}&=\sup_x\Big\{\int_{v_1>0}|f(x,v)|(1+|v|^2)dv\Big\},\cr
	\sup_x\|f\|_{L^1_{2,-}}&=\sup_x\Big\{\int_{v_1<0}|f(x,v)|(1+|v|^2)dv\Big\}.
\end{align*}
\item We define the trace norm $ \|\cdot\|_{L^1_{\gamma,|v_1|}}$ by
\begin{align*}
\|f\|_{L^1_{\gamma,|v_1|}}=\|f\|_{L^1_{\gamma,|v_1|,+}}+\|f\|_{L^1_{\gamma,|v_1|,-}},
\end{align*}
where the outward trace norm $\|f\|_{L^1_{\gamma,|v_1|,+}}$ and the inward trace norm $\|f\|_{L^1_{\gamma,|v_1|,-}}$ are given by
\begin{align*}
	\|f\|_{L^1_{\gamma,|v_1|,+}}
	&=\int_{v_1<0}|f(0,v)||v_1|dv+\int_{v_1>0}|f(1,v)||v_1|dv,\cr
	\|f\|_{L^1_{\gamma,|v_1|,-}}
	&=\int_{v_1>0}|f(0,v)||v_1|dv+\int_{v_1<0}|f(1,v)||v_1|dv.
\end{align*}
\item Throughout the paper, we normalize the wall Maxwellian as follows:
\[
\|M_w\|_{L^1_{\gamma,|v_1|,\pm}}=1.
\]
\item Similarly, we define another trace norm $ \|\cdot\|_{L^1_{\gamma,\langle v \rangle}}$ by
\begin{align*}
\|f\|_{L^1_{\gamma,\langle v\rangle}}=\|f\|_{L^1_{\gamma,\langle v\rangle,+}}+\|f\|_{L^1_{\gamma,\langle v\rangle,-}}.
\end{align*}
where the outward trace norm $\|\cdot\|_{L^1_{\gamma,\langle v\rangle},+}$ and the inward trace norm $\|\cdot\|_{L^1_{\gamma,\langle v\rangle},-}$ are given  by
\begin{align*}
	\|f\|_{L^1_{\gamma,\langle v\rangle,+}}&=\int_{v_1<0}|f(0,v)|(1+|v|^2)dv+\int_{v_1>0}|f(1,v)|(1+|v|^2)dv,\cr
	\|f\|_{L^1_{\gamma,\langle v\rangle,-}}&=\int_{v_1>0}|f(0,v)|(1+|v|^2)dv+\int_{v_1<0}|f(1,v)|(1+|v|^2)dv.
\end{align*}
\item Throughout the paper, $C_{LR,1}$, $C_{LR,2}$  denote
\begin{align}\label{CLR}
	\begin{split}
C_{LR,1}&=\|f_{LM}\|_{L^1_{\gamma,\langle v_1\rangle} }\|M_{w}\|_{L^1_{\gamma,\langle v\rangle}},\cr
C_{LR,2}&=\|f_{LR}\|_{L^1_{\gamma,|v_1|}}+\|M_w\|_{L^1_{\gamma,\langle v\rangle}},
\end{split}
\end{align}
and $a_{\ell,1}$ and $a_{\ell,2}$ denote
\begin{align}\label{quantities}
	\begin{split}
		a_{\ell,1}=\int_{\mathbb{R}^3}e^{- \frac{ 1}{|v_1|}}f_{LR}dv,\qquad
		a_{\ell,2}=\frac{1}{2}\int_{\mathbb{R}^3}e^{- \frac{ 1}{|v_1|}}M_wdv.
	\end{split}
\end{align}
\end{itemize}
\noindent{\bf(P) Properties of boundary data:} To avoid repetition in the statement of the theorem,  we summarize here the assumptions to be imposed on $f_{LR}$ later:\newline

\noindent$(P_1)$ The inflow boundary data $f_{LR}\geq 0$, not identically $0$, has a finite trace norm:
\begin{eqnarray*}
\|f_{LR}\|_{L^{1}_{\gamma,\langle v\rangle}}<\infty
\end{eqnarray*}

\noindent$(P_2)$ The inflow data does not induce vertical flows: :
\begin{equation*}
\int_{\mathbb{R}^2}f_Lv_idv=\int_{\mathbb{R}^2}f_Rv_3dv=0 \quad(i=2,3)
\end{equation*}

\subsection{Main result 1: inflow dominant case} We now state our main results for the inflow dominant case. 
That is, when $\delta_1$ is not small.
We first define the mild solution of (\ref{ESBGK}) for the inflow dominant case (Theorem \ref{Main1}) as follows:
\begin{definition} $f\in L^{\infty}\left([0,1]; L^1_2(\mathbb{R}^3)\right)\cap L^1_{\gamma,\langle v\rangle}(\mathbb{R}^3)$ is said to be a mild solution for (\ref{ESBGK})
	if it satisfies
	\begin{align}\label{mild f+}
		\begin{split}
		f(x,v)&=e^{-\frac{x}{\tau|v_1|}}f(0,v)
		+\frac{1}{\tau|v_1|}\int_{0}^{x}
                e^{-\frac{x-y}{\tau|v_1|}}\mathcal{M}_{\nu}(f)dy
		\quad\text{if $v_{1}>0$}
		\end{split}
	\end{align}
	and
	\begin{align}\label{mild f-}
		\begin{split}
		f(x,v)&=e^{-\frac{x}{\tau|v_1|}\int^1_xdy}f(1,v)
		+\frac{1}{\tau|v_1|}\int_{x}^1
                e^{-\frac{x-y}{\tau|v_1|}}
		\mathcal{M}_{\nu}(f)dy
		\quad\text{if $v_{1}<0$},
		\end{split}
	\end{align}
	where $f(0,v)$ and $f(1,v)$ are defined in the trace sense as
	\begin{align*}
		\begin{split}
			f(0,v)&=\delta_1f_L(v)+\delta_2\left(\int_{|v_1|<0}f(0,v)|v_1|dv\right)M_w(0)+\delta_3
                        f(0,Rv),\quad (v_1>0) ,  \cr 
			f(1,v)&=\delta_1f_R(v)+\delta_2\left(\int_{|v_1|>0}f(1,v)|v_1|dv\right)M_w(1)+\delta_3
                        f(1,Rv),\quad (v_1<0) .
		\end{split}
	\end{align*}
\end{definition}
We are now ready to state the main result of this paper:

\begin{theorem}\label{Main1}{\bf [Inflow dominant case]}\newline
$(1)$ $($Non-critical $\nu$$)$ Let $-1/2<\nu<1$. Suppose $f_{LR}$ satisfies $(P_1)$ and $(P_2)$.
Then there exist  constants $K_1>0$, $\epsilon>0$ such that, if  $\tau>K_1$ and $\delta_2+\delta_3<\epsilon$,
then there exists a unique mild solution $f\geq0$ to the boundary value problem  (\ref{ESBGK}), (\ref{boundary conditions}) satisfying
\begin{align*}
\int_{\mathbb{R}^3}f(x,v)dv\geq a_{\ell,1},\quad \int_{\mathbb{R}^3}f(x,v)(1+|v|^2)dv\leq2C_{LR,1},
\end{align*}
and

\begin{align*}
	C^1_{\nu}\delta_1^2\frac{\gamma_{\ell,1}}{3C_{LR,1}^2}\leq\kappa^{\top}\left\{\mathcal{T}_{\nu}\right\}\kappa\leq \frac{2 }{{3a_{\ell,1}}\delta_1}C^2_{\nu}C_{LR,1},
\end{align*}
where $\gamma_{\ell,1}$ is defined by
\begin{align}\label{gamma}
\gamma_{\ell,1}=\left(\int_{v_1>0}e^{-\frac{1}{|v_1|}}f_L(v)|v_1|dv\right)
\left(\int_{v_1<0}e^{-\frac{1}{|v_1|}}f_R(v)|v_1|dv\right)>0.
\end{align}
\noindent$(2)$ $($Critical $\nu$$)$ Let $\nu=-1/2$:
Suppose $f_{LR}$ satisfies $(P_1)$ and $(P_2)$. 
Then there exist  constants $K_1>0$, $\varepsilon_1, \varepsilon_2>0$  such that, if  $\tau>K_1$, $\delta_2+\delta_3<\epsilon_1$ and 
\[
\left|\int_{v_1>0}f_L|v_1|dv-\int_{v_1<0}f_R|v_1|dv\right|\leq\varepsilon_2,
\]
then there exists a unique mild solution $f\geq0$ to the boundary value problem  (\ref{ESBGK}), (\ref{boundary conditions}) satisfying
\begin{align*}
\int_{\mathbb{R}^3}f(x,v)dv\geq \delta_1a_{\ell,1},\quad \int_{\mathbb{R}^3}f(x,v)|v|^2dv\leq 
2C_{LR,1},
\end{align*}
and
\begin{align*}
	\delta_1\frac{a_{-1/2,1}}{2C_{LR,1}}\leq\kappa^{\top}\left\{\mathcal{T}_{-1/2}\right\}\kappa\leq \frac{3}{2a_{\ell,1}}C_{LR,1},
\end{align*}
where $a_{-1/2,1}$ denote
\begin{align}\label{a1}
a_{-1/2,1}=\inf_{|\kappa|=1}
\int_{\mathbb{R}^3}e^{-\frac{1}{|v_1|}
}f_{LR}\left\{|v|^2-(v\cdot\kappa)^2\right\}dv>0.
\end{align}
\end{theorem}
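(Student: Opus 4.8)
The plan is to realize the solution as the fixed point of the nonlinear map $\Phi$ attached to the mild formulation \eqref{mild f+}--\eqref{mild f-}: given a candidate $f$, one reads off the fields $(\rho,U,T,\Theta)$ from \eqref{macroscopic field}, forms $\mathcal{M}_{\nu}(f)$, and substitutes it into the right-hand sides of \eqref{mild f+}--\eqref{mild f-}, the traces $f(0,\cdot),f(1,\cdot)$ being recovered from \eqref{boundary conditions}. Since $\delta_2+\delta_3<\epsilon$, the diffusive/specular part of the boundary operator is a small perturbation of the identity, so the traces are well defined and the inflow term $\delta_1 f_{LR}$ dominates. I would then run a Banach fixed point argument on the closed convex set
\begin{align*}
\mathcal{A}=\left\{f\geq0:\ \int_{\mathbb{R}^3}f\,dv\geq a_{\ell,1},\ \sup_x\|f\|_{L^1_2}\leq 2C_{LR,1},\ m\leq\kappa^{\top}\mathcal{T}_{\nu}\kappa\leq M \ \text{for } |\kappa|=1\right\},
\end{align*}
where $m,M$ are the two-sided bounds stated in the theorem. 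The lower bound $m>0$ is precisely what guarantees $\det\mathcal{T}_{\nu}>0$ and the existence of $\mathcal{T}_{\nu}^{-1}$, hence that $\mathcal{M}_{\nu}(f)$ and $\Phi(f)$ are well defined. In the critical case I would additionally construct $\Phi$ on the subclass of functions even in $v_2,v_3$, on which the transverse bulk velocities vanish, $U_2=U_3=0$; this subclass is invariant under $\Phi$ because the wall Maxwellian is even and $R$ flips only $v_1$, and $(P_2)$ is the compatibility condition that makes the inflow contribution consistent with it.

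The upper bound $\sup_x\|f\|_{L^1_2}\leq 2C_{LR,1}$ and the density lower bound come from feeding $f\in\mathcal{A}$ into \eqref{mild f+}--\eqref{mild f-}. The transport factor satisfies $e^{-x/(\tau|v_1|)}\geq e^{-1/|v_1|}$ for $\tau>1$ and all $x\in[0,1]$, so the inflow term already produces $f(x,v)\geq\delta_1 e^{-1/|v_1|}f_{LR}(v)$, which integrates to $\int_{\mathbb{R}^3}f\,dv\gtrsim\delta_1 a_{\ell,1}$ (sharpened to $a_{\ell,1}$ in the non-critical case using $\delta_1\approx1$), while the gain term $\frac{1}{\tau|v_1|}\int e^{\cdots}\mathcal{M}_{\nu}(f)\,dy$ contributes only $O(1/\tau)$ to the moments and is absorbed for $\tau>K_1$.

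The heart of the matter is the lower bound on $\kappa^{\top}\mathcal{T}_{\nu}\kappa$. From $\rho\,\kappa^{\top}\mathcal{T}_{\nu}\kappa=\tfrac{1-\nu}{3}\int_{\mathbb{R}^3}f|v-U|^2\,dv+\nu\int_{\mathbb{R}^3}f\,(\kappa\cdot(v-U))^2\,dv$ and the elementary inequality $(\kappa\cdot w)^2\leq|w|^2$, the non-critical case yields $\kappa^{\top}\mathcal{T}_{\nu}\kappa\geq(1-2|\nu|)T$ for $\nu<0$ (and $\geq(1-\nu)T$ for $\nu\geq0$), so for $-1/2<\nu<1$ the temperature tensor is equivalent to $T$; it then remains to bound $T$ from below by a two-stream variance estimate, using that $f\geq\delta_1 e^{-1/|v_1|}f_{LR}$ carries mass with $v_1>0$ (from $f_L$) and with $v_1<0$ (from $f_R$), so that the variance of $v_1$ is bounded below by a multiple of $\delta_1^2\gamma_{\ell,1}/\rho^2$ and hence $T\gtrsim\delta_1^2\gamma_{\ell,1}/C_{LR,1}^2$. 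At $\nu=-1/2$ the coefficient $1-2|\nu|$ vanishes, so this route collapses exactly in the $v_1$ direction where the two-stream spread lives. Instead I would write $\rho\,\kappa^{\top}\mathcal{T}_{-1/2}\kappa=\tfrac12\int_{\mathbb{R}^3}f\,|(v-U)_{\perp}|^2\,dv$, bound $f\geq\delta_1 e^{-1/|v_1|}f_{LR}$, and expand $|(v-U)_{\perp}|^2$; since $U_2=U_3=0$ the cross term is proportional to $U_1\int_{\mathbb{R}^3}e^{-1/|v_1|}f_{LR}v_1\,dv$ and the remainder is $O(U_1^2)$. Here the stationary mass balance makes $J=\rho U_1$ constant in $x$ and equal, up to $O(\delta_2+\delta_3)$, to the flux imbalance $\int_{v_1>0}f_L|v_1|dv-\int_{v_1<0}f_R|v_1|dv$, so the hypotheses $\delta_2+\delta_3<\epsilon_1$ and $\varepsilon_2$ small force $|U_1|$ and the cross term to be small, leaving $\rho\,\kappa^{\top}\mathcal{T}_{-1/2}\kappa\geq\tfrac12\delta_1 a_{-1/2,1}-o(1)$; dividing by $\rho\leq 2C_{LR,1}$ gives the stated lower bound, with positivity supplied by $a_{-1/2,1}>0$.

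Finally, for existence and uniqueness I would show $\Phi$ contracts on $\mathcal{A}$: the boundary part of $\Phi(f)-\Phi(g)$ is controlled by $(\delta_2+\delta_3)$ times trace norms, and the interior part by the Lipschitz dependence of $\mathcal{M}_{\nu}$ on the moments of $f$ multiplied by the $1/\tau$ weight, the uniform lower bound $m$ on $\mathcal{T}_{\nu}$ being exactly what keeps this Lipschitz constant finite since it controls the derivatives of the ellipsoidal Gaussian. Choosing $\tau>K_1$ makes the contraction constant strictly less than one, and the Banach fixed point theorem delivers the unique mild solution, which inherits all bounds of $\mathcal{A}$. I expect the critical-case lower bound on $\mathcal{T}_{-1/2}$ to be the main obstacle: one must simultaneously control $U_1$ through mass-flux conservation and the flux discrepancy, exploit the transverse non-degeneracy $a_{-1/2,1}>0$, and verify that these estimates survive passage through the nonlinear map so that $\Phi(\mathcal{A})\subseteq\mathcal{A}$.
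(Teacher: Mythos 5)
Your proposal is correct and follows essentially the same route as the paper: an iteration/fixed-point argument on an invariant set defined by exactly the a priori bounds in the statement, the two-stream variance estimate giving $T\gtrsim\delta_1^2\gamma_{\ell,1}/C_{LR,1}^2$ in the non-critical case, the directional-temperature identity $\rho\,\kappa^{\top}\mathcal{T}_{-1/2}\kappa=\tfrac12\int f\{|v-U|^2-((v-U)\cdot\kappa)^2\}\,dv$ combined with control of $U_1$ by the boundary-flux discrepancy in the critical case, and a Lipschitz estimate on $\mathcal{M}_{\nu}$ (finite precisely because of the lower bound on $\mathcal{T}_{\nu}$) for the contraction. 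Two small caveats: first, your treatment of the transverse velocities in the critical case by restricting $\Phi$ to distributions even in $v_2,v_3$ requires $f_L,f_R$ themselves to be even, which is strictly stronger than the stated hypothesis $(P_2)$ (vanishing first moments only); the paper instead shows directly, in Lemma \ref{23}(2), that $|U_2|,|U_3|=O\big(\delta_3,(\ln\tau+1)\tau^{-1}\big)$ using only $(P_2)$ and the oddness of $M_w v_i$, which is what is needed to recover the theorem in its stated generality. Second, for the invariance $\Phi(\mathcal{A})\subseteq\mathcal{A}$ the flux $\int\Phi(f)v_1\,dv$ is not exactly constant in $x$, since $\mathcal{M}_{\nu}(f)$ carries the moments of $f$ rather than of $\Phi(f)$; the ``stationary mass balance'' must therefore be replaced by the mild-form computation that produces the additional $O(\tau^{-1})$ error, as in Lemma \ref{23}(1), after which your argument goes through.
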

\subsection{Main results 2: diffusive dominant case} In the diffusive dominant case, that is, when the the diffusive boundary condition dominates so that we cannot enforce smallness on $\delta_2$, we impose the following flux control condition:
\begin{align}\label{flux c}
\int_{v_1<0}f(0,v)|v_1|dv+\int_{v_1>0}f(1,v)|v_1|dv=1.
\end{align}
This is because, without such additional assumptions, we generally  don't have uniquess for the boundary problem with diffusive boundary conditions (See the paragraphs following the Theorem \ref{Main2}.)
Using (\ref{ESBGK}), (\ref{boundary conditions}) and (\ref{flux c}), we reformulate the boundary
condition into (\ref{new boundary}), and we consider 
the following mild solution for the diffusive dominant case (Theorem \ref{Main2}).
(See Section 7 for the detail of the reformulation.)
\begin{definition} $f\in L^{\infty}\left([0,1]; L^1_2(\mathbb{R}^3)\right)\cap L^1_{\gamma,\langle v\rangle}(\mathbb{R}^3)$ is said to be a mild solution for (\ref{ESBGK})
	if it satisfies
	\begin{align}\label{mild f+dff}
		\begin{split}
		f(x,v)&=e^{-\frac{x}{\tau|v_1|}}f(0,v)
		+\frac{1}{\tau|v_1|}\int_{0}^{x}
                e^{-\frac{x-y}{\tau|v_1|}}\mathcal{M}_{\nu}(f)dy
		\quad\text{if $v_{1}>0$}
		\end{split}
	\end{align}
	and
	\begin{align}\label{mild f-dff}
		\begin{split}
		f(x,v)&=e^{-\frac{x}{\tau|v_1|}}f(1,v)
		+\frac{1}{\tau|v_1|}\int_{x}^1
                e^{-\frac{x-y}{\tau|v_1|}}
	\mathcal{M}_{\nu}(f)dy
		\quad\text{if $v_{1}<0$},
		\end{split}
	\end{align}
	where
	\begin{align}\label{new boundary}
		\begin{split}
			f(0,v)&=\delta_1f_L(v)+\delta_2\mathcal{S}_L(f)M_w(0)+\delta_3 f(0,Rv),\quad (v_1>0)\cr
			f(1,v)&=\delta_1f_R(v)+\delta_2\mathcal{S}_R(f)M_w(1)+\delta_3 f(1,Rv),\quad (v_1<0)
		\end{split}
	\end{align}
	and $\mathcal{S}_L(f)$, $\mathcal{S}_R(f)$ denote
	\begin{align*}
		\mathcal{S}_L(f)&=\frac{1-\delta_1}{2-\delta_1}+\frac{\delta_1}{2-\delta_1}\int_{v_1<0}f_R|v_1|dv
		-\frac{1}{\tau(2-\delta_1)}\int_{v_1>0}\int^1_0\mathcal{R}(y,v)dydv,\cr
		\mathcal{S}_R(f)&=\frac{1-\delta_1}{2-\delta_1}+\frac{\delta_1}{2-\delta_1}\int_{v_1>0}f_L|v_1|dv
		-\frac{1}{\tau(2-\delta_1)}\int_{v_1<0}\int^1_0\mathcal{R}(y,v)dydv,
	\end{align*}
	with 
	\begin{align*}
	\mathcal{R}(f)(x,v)=\mathcal{M}_{\nu}(f)(x,v)-f(x,v).
	\end{align*}
\end{definition} 
\begin{theorem}\label{Main2}{\bf[Diffusive dominant case]}\newline
	\noindent$(1)$ $($Non-critical $\nu$$)$ Let $-1/2<\nu<1$. Suppose $f_{LR}$ satisfies $(P_1)$.
	Then there exist  constants $K_1>0$, $\epsilon>0$  such that, if  $\tau>K_1$ and $\delta_1<\epsilon$,
	then there exists a unique mild solution $f\geq0$ to the boundary value problem  (\ref{ESBGK}), (\ref{boundary conditions}) and (\ref{flux c}) satisfying
	\begin{align*}
		\int_{\mathbb{R}^3}f(x,v)dv\geq a_{\ell,2},\quad \int_{\mathbb{R}^3}f(x,v)(1+|v|^2)dv\leq2C_{LR,2},
	\end{align*}
	and
	
	\begin{align*}
		C^1_{\nu}\delta_2^2\frac{\gamma_{\ell,2}}{27C_{LR,2}^2}\leq\kappa^{\top}\left\{\mathcal{T}_{\nu}\right\}\kappa\leq \frac{2 }{{3a_{\ell,2}}}C^2_{\nu}C_{LR,2},
	\end{align*}
where $\gamma_{\ell,2}$ denotes
\begin{align}\label{gamma dff}
\gamma_{\ell,2}=\left(\int_{v_1>0}e^{-\frac{1}{|v_1|}}M_w(0)|v_1|dv\right)
\left(\int_{v_1<0}e^{-\frac{1}{|v_1|}}M_w(1)|v_1|dv\right)>0.
\end{align}
	\noindent$(2)$ $($Critical $\nu$$)$ Let $\nu=-1/2$:
	Suppose $f_{LR}$ satisfies $(P_1)$. 
	Then there exists constants $K_1>0$, $\varepsilon_1, \varepsilon_2>0$ such that, if  $\tau>K_1$, $\delta_1<\epsilon_1$ 
	then there exists a unique mild solution $f\geq0$ to the boundary value problem  (\ref{ESBGK}), (\ref{boundary conditions}) and (\ref{flux c}) satisfying
	\begin{align*}
		\int_{\mathbb{R}^3}f(x,v)dv\geq a_{\ell,2},\quad \int_{\mathbb{R}^3}f(x,v)|v|^2dv\leq 
		2C_{LR,2},
	\end{align*}
	and
	\begin{align*}
		\delta_2\frac{a_{-1/2,2}}{4C_{LR,2}}\leq\kappa^{\top}\left\{\mathcal{T}_{-1/2}\right\}\kappa\leq \frac{3}{2a_{\ell,2}}C_{LR,2},\cr
	\end{align*}
	where $ a_{-1/2,2}$ denotes
	\begin{align}\label{a2}
a_{-1/2,2}=\inf_{|\kappa|=1}
	\int_{\mathbb{R}^3}e^{-\frac{1}{|v_1|}
	}M_w\left\{|v|^2-(v\cdot\kappa)^2\right\}dv>0.
	\end{align}
 \end{theorem}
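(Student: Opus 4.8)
The plan is to prove Theorem~\ref{Main2} by the same scheme used for the inflow dominant case (Theorem~\ref{Main1}), the essential difference being that, since now $\delta_1$ is small, the role played by the inflow data $f_{LR}$ in generating the lower bounds is taken over by the wall Maxwellian $M_w$; this is why the constants $a_{\ell,2}$, $\gamma_{\ell,2}$ and $a_{-1/2,2}$ are built from $M_w$ rather than from $f_{LR}$. First I would record the reformulation of Section~7: using the flux control condition~\eqref{flux c} the implicit diffusive boundary is rewritten in the explicit form~\eqref{new boundary}, with $\mathcal{S}_L(f),\mathcal{S}_R(f)$ as given. Note that the normalization~\eqref{flux c} is exactly what restores uniqueness, which otherwise fails for a purely diffusive boundary (the total mass being a free scaling parameter). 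I would then set up the solution operator $\Phi$ that sends a candidate $f$, ranging over the closed convex set $\Omega$ of nonnegative distributions obeying the density lower bound $\int_{\mathbb{R}^3} f\,dv\ge a_{\ell,2}$, the moment upper bound $\int_{\mathbb{R}^3} f(1+|v|^2)\,dv\le 2C_{LR,2}$, and a two-sided temperature-tensor bound $c_1\le \kappa^{\top}\{\mathcal{T}_\nu\}\kappa\le c_2$, to the unique solution of the \emph{linear} transport problem~\eqref{mild f+dff}--\eqref{mild f-dff} whose source is the frozen Gaussian $\mathcal{M}_\nu(f)$ and whose boundary data are~\eqref{new boundary}. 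The fixed point of $\Phi$ is the desired mild solution.

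Second, I would establish the a priori bounds showing $\Phi(\Omega)\subseteq\Omega$. The density lower bound comes from dropping all nonnegative terms in the mild representation except the diffusive one: for $v_1>0$, $f(x,v)\ge e^{-x/(\tau|v_1|)}\,\delta_2\mathcal{S}_L(f)\,M_w(0)\ge e^{-1/|v_1|}\delta_2\mathcal{S}_L(f)\,M_w(0)$ since $x\le 1$ and $\tau>1$, and symmetrically for $v_1<0$; for $\tau>K_1$ and $\delta_1$ small the $O(1/\tau)$ correction in $\mathcal{S}_L$ is dominated and $\delta_2\mathcal{S}_L(f)\gtrsim1$, which upon integration yields $\int f\,dv\ge a_{\ell,2}$. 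The moment upper bound follows from the conservation properties of $\mathcal{M}_\nu$ (equal mass, momentum and energy with $f$) together with the trace contributions of $f_{LR}$ and $M_w$, exactly as in Theorem~\ref{Main1}; the upper bound on $\kappa^{\top}\{\mathcal{T}_\nu\}\kappa$ is then immediate from the moment bound and $\rho\ge a_{\ell,2}$.

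Third --- and this is the heart of the matter --- comes the lower bound on the temperature tensor, which is precisely what keeps $\mathcal{M}_\nu$ a genuine (non-degenerate) Gaussian and hence makes $\Phi$ well-defined and Lipschitz. In the non-critical range I would use the equivalence $\kappa^{\top}\{\mathcal{T}_\nu\}\kappa\ge \min\{1-\nu,1+2\nu\}\,T$, valid because $0\le\kappa^{\top}\Theta\kappa\le 3T$, and then bound $T$ from below by the spread in $v_1$ produced by having gas entering from \emph{both} walls; this two-sided contribution is measured by the product $\gamma_{\ell,2}$ and produces the factor $\delta_2^2\gamma_{\ell,2}/C_{LR,2}^2$. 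The constant $\min\{1-\nu,1+2\nu\}$ degenerates as $\nu\to-1/2$, so the critical case must be treated separately. Here I would use the identity
\[
\rho\,\kappa^{\top}\{\mathcal{T}_{-1/2}\}\kappa=\frac12\int_{\mathbb{R}^3}f\big\{|v-U|^2-((v-U)\cdot\kappa)^2\big\}\,dv,
\]
bound $f$ below by the diffusive term $\gtrsim\delta_2\,e^{-1/|v_1|}M_w$, and then exploit that the weight $e^{-1/|v_1|}M_w$ is \emph{even} in $v$: expanding the integrand around $U$, the terms linear in $v$ integrate to zero and the remaining $U$-dependent term $|U|^2-(U\cdot\kappa)^2$ is nonnegative, so that
\[
\int_{\mathbb{R}^3}e^{-1/|v_1|}M_w\big\{|v-U|^2-((v-U)\cdot\kappa)^2\big\}\,dv\ \ge\ a_{-1/2,2}.
\]
Combined with $\rho\le 2C_{LR,2}$ this gives a lower bound of the claimed form $\kappa^{\top}\{\mathcal{T}_{-1/2}\}\kappa\gtrsim \delta_2\,a_{-1/2,2}/C_{LR,2}$.

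Finally, granting the well-posedness of the linear transport-plus-boundary problem with a frozen source (established for this boundary setting in the earlier sections together with the normalization~\eqref{flux c}), the map $\Phi$ is a contraction on $\Omega$: the source enters~\eqref{mild f+dff}--\eqref{mild f-dff} with the prefactor $1/\tau$, and $f\mapsto\mathcal{M}_\nu(f)$ is Lipschitz on $\Omega$ precisely because the uniform two-sided control of $\mathcal{T}_\nu$ keeps $\mathcal{T}_\nu^{-1}$ and $\det\mathcal{T}_\nu$ under control; taking $\tau>K_1$ large and $\delta_1<\epsilon_1$ small forces the contraction constant below one, and the Banach fixed point theorem yields the unique mild solution, which inherits all the bounds of $\Omega$. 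I expect the main obstacle to be the self-consistent closure of the temperature-tensor lower bound inside the iteration at $\nu=-1/2$: one must verify that the evenness argument survives the presence of the small inflow and specular contributions and that the resulting bound is uniform in the bulk velocity $U$. This is exactly the point where the symmetry of the wall Maxwellian $M_w$ does much of the work that the smallness of the boundary flux discrepancy did in the inflow dominant case.
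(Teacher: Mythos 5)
Your overall architecture coincides with the paper's: reformulate the diffusive boundary via the flux normalization \eqref{flux c} into the explicit form \eqref{new boundary}, iterate in a solution space ($\Omega_3$, $\Omega_4$) carrying exactly the bounds in the statement, get the non-critical temperature lower bound from the equivalence $\kappa^{\top}\{\mathcal{T}_{\nu}\}\kappa\geq\min\{1-\nu,1+2\nu\}T$ plus the two-sided flux product $\gamma_{\ell,2}$, treat the critical case through the directional-temperature identity, and close with a contraction estimate in which the $f$-dependence of $\mathcal{S}_{L},\mathcal{S}_{R}$ enters only through an $O(\tau^{-1})$ remainder. The one place you genuinely depart from the paper is the critical-case lower bound: you center the quadratic form at $U$ and kill the linear terms by claiming the weight $e^{-1/|v_1|}M_w$ is even in $v$. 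That claim fails in general. By the paper's convention $M_w(v)=M_w(0,v)1_{v_1>0}+M_w(1,v)1_{v_1<0}$, and the diffusive lower bound on $f$ carries $\mathcal{S}_L(f)M_w(0)$ on $v_1>0$ but $\mathcal{S}_R(f)M_w(1)$ on $v_1<0$; unless $T_w(0)=T_w(1)$ and $\mathcal{S}_L=\mathcal{S}_R$, this weight is not symmetric under $v_1\mapsto -v_1$, so the term $-2U\cdot\int w\,v\,dv$ leaves a residual proportional to $U_1$. You cannot eliminate it by symmetry; you must control it by showing $U_1$ is small, which is precisely what the paper does: it keeps the \emph{non-centered} decomposition $\rho\,\kappa^{\top}\{\mathcal{T}_{-1/2}\}\kappa=I+II$ with $I=\int f\{|v|^2-(v\cdot\kappa)^2\}dv$ bounded below by $\tfrac13\delta_2 a_{-1/2,2}$ directly (no centering, hence no symmetry needed), and $II=\rho|U|^2-\rho(U\cdot\kappa)^2$ bounded by $4\delta_1^2|\int_{v_1>0}f_L|v_1|dv-\int_{v_1<0}f_R|v_1|dv|^2+O(\delta_3,\tau^{-1})$ via the bulk-velocity lemma, which is small here because $\delta_1$ is small. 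So your worry at the end is slightly misplaced: the inflow and specular contributions are harmless (they are simply dropped in the lower bound for $f$), whereas the asymmetry of the retained wall-Maxwellian weight is the real obstruction, and repairing it forces you back to the same bulk-velocity control the paper uses. Incidentally, your centered identity carries the correct factor $\tfrac12$ (the paper's display \eqref{tbt} omits it), but this only shifts constants.

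Two smaller points. First, the moment upper bound does not follow from the conservation properties of $\mathcal{M}_{\nu}$: in the mild formulation the Gaussian enters through $\frac{1}{\tau|v_1|}\int e^{-(x-y)/(\tau|v_1|)}\mathcal{M}_{\nu}(f)\,dy$, and the paper controls this via the pointwise Gaussian bound of Lemma \ref{Max decomposition dff} together with the $O((\ln\tau+1)/\tau)$ integral estimate of Lemma \ref{main estimate}, not via moment conservation. Second, in the contraction step you should make explicit that the $\delta_2$ boundary contribution is contractive only \emph{because} of the reformulation: $\mathcal{S}_L(f^n)-\mathcal{S}_L(f^{n-1})$ involves only the $\tau^{-1}\int\int\mathcal{R}$ remainder, giving a factor $\delta_2/\tau$ rather than the non-small factor $\delta_2$ that a naive iteration on \eqref{boundary conditions} would produce. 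With these repairs your argument lands on the paper's proof.
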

\begin{remark}
Note that we don't impose any smallness restriction on the discrepancy of the boundary flux for the critical case ($\nu=-1/2$) in the diffusive dominant case. 
\end{remark}
Among others, the main difficulty comes from the derivation of the lower bound estimate of the temperature tensor $\mathcal{T}_{\nu}$. In the non-critical case $-1/2<\nu<1$, the temperature tensor satisfies the following equivalence relation:
\begin{equation}\label{ER}
\min\{1-\nu,1+2\nu\}T \mathbb{I}_3\leq\mathcal{T}_{\nu}\leq \max\{1-\nu,1+2\nu\}T\mathbb{I}_3.
\end{equation}
Therefore, it suffices to study the local temperature $T$, that can be shown to be bounded below by a quantity constructed from the boundary data.
In the critical case, $\nu=-1/2$, however, the first inequality of (\ref{ER}) becomes trivial, giving no information on the strict positivity of the temperature tensor.  
Our main observation in this case is that the bulk velocity in $x$ direction can be controlled by the discrepancy of the boundary flux even without the smallness of $\delta_1$ in the inflow dominance case:
\begin{align}\label{U1}
	\Big|U_1(x)\Big|
	&\leq\delta_1\left|\int_{v_1>0}f_L|v_1|dv-\int_{v_1<0}f_R|v_1|dv\right|
	+ O\!\left(\delta_2,\delta_3,\tau^{-1}\right),
\end{align}
which is physically relavent in that, if we don't have enough flux from both ends of the slab, we cannot expect fast flow inbetween.
We then observe that the quadratic polynomial of $\mathcal{T}_{-1/2}$ can be expressed using the local temperature and the directional temperature in the critical case: 
\begin{align*}
	\kappa^{\top}\left\{\mathcal{T}_{-1/2}\right\}\kappa
	&=\frac{1}{\rho}\int_{\mathbb{R}^3}f|v-U|^2dv-\frac{1}{\rho}\int_{\rrr}f\big\{(v-U)\cdot\kappa\big\}^2dv.
\end{align*}
We mention that the concept of "directional temperature" was coined by Villani in \cite{V2}, and  was crucially used in the proof of entropy production estimates of the Boltzmann equation. 
This, with the use of  (\ref{U1}), enables one to bound the temperature tensor in the critical case from below by a quantity defined only through the inflow boundary data and the inflow boundary flux:
\begin{align*}
&\frac{1}{2}\inf_{|\kappa|=1}\int_{\mathbb{R}^3}e^{-\frac{2C_{LM,1}}{|v_1|}
	}f_{LR}\left\{|v|^2-(v\cdot\kappa)^2\right\}dv
	-2\left|\int_{v_1>0}f_L|v_1|dv-\int_{v_1<0}f_R|v_1|dv\right|^2,
\end{align*}
up to small error. The first term can roughly be interpreted as the difference of total energy minus the directional energy of the inflow boundary data away from zero, and the second term is the discrepancy of the flux at both ends.
This enables one to bound the temperature tensor from below when $\delta_i\,(i=2,3)$ and $\tau^{-1}$ are sufficiently small.  

In the diffusive dominant case, similar argument is working but there is an important difference to be mentioned that, without additional assumption on the amount of flux given in (\ref{flux c}), we cannot expect the uniqueness of the solutions. Consider the following simple boundary value problem with diffusive boundary condition:
\begin{align*}
v_1\partial_xf=0,\qquad f(i,v)=\left(\int_{(-1)^{i+1}v_1>0}f(i,v)|v_1|dv\right)M_w(i),\quad(i=0,1).
\end{align*}
It can be easily checked that $$f(x,v)=C_1 M_w(v,0)1_{v_1>0}+C_2 M_w(v,1)1_{v_1<0}$$ solves the problem for
any $C_1,C_2>0$. 
In this regards, we impose the flux control condition (\ref{flux c}) in this case.

\subsection{Literature review}
 We start with the results on the stationary problems of the BGK model in a slab, which is most relevent to the current work.
 The first existence theory for stationary BGK model can be found in \cite{Ukai}, where Ukai applied the a version of Schauder fixed point theorem to solve the slab problem with inflow boundary condition.  
 In \cite{Nouri}, Nouri derived the existence of weak solutions for a quantum BGK model with a discretized condensation ansatz in a bounded interval.  In \cite{Bang Y}, classical Banach fixed point argument was developed to study the existence and uniqueness for slab problems for ES-BGK model. In \cite{Bang Y}, however, the boundary condition was limited to inflow boundary condition, and the case $\nu=-1/2$ is not treated, which is the main motivation of the current work. The argument of \cite{Bang Y} was then applied to a relativistic BGK model \cite{HY} and to the quantum BGK model \cite{BGCY}.

For the time dependent problems, it was Perthame who first obtained the existence of weak solutions  \cite{Perthame} under the assumption of finite mass, momentum, energy and entropy.
The unique mild solution was then found in \cite{P-P} in a function space with sufficient decay in the velocity domain.
Mischler extended this to  the whole space in \cite{Mischler}. Zhang et al considered the $L^p$ weak solution of the BGK model in \cite{Z-H}.
For the asymptotic stability near global equilibriums, we refer to \cite{Bello,Yun1}.
Various macroscopic limit for the BGK type models, including the hydrodynamic limit at the Euler and Navier-Stokes limit, Diffusion limit, and fractional limit can be found in \cite{DMOS,M-M-M,Mellet,SR1,SR2}.
For the development or analysis of numerical schemes for BGK models, see \cite{F-J,F-R,Issau,M,M-S,RSY,RY} and rich references therein.

As was mentioned in the introduction, after the verification of H-theorem of ES-BGK model made in \cite{ALPP}, the ES-BGK model got popularized a lot \cite{ABLP,F-J,G-T,M-S,Z-Stru}. Brull et al developed
a systematical way to derive of ES-BGK model and provided another proof of H-theorem in \cite{B3}. 
The entropy production estimate for ES-BGK model was obtained in \cite{Yun4}. 
For existence results, we refer to \cite{PY1} for weak solutions, \cite{Yun2} for unique mild solution 
and \cite{Yun3} for the result in near-global-equilibrium regime. For related results for the ES-BGK model for polyatomic molecules, see \cite{PY2,PY3,PY4,Yun22}.\newline


This paper is organized as follows. In Section 2, we set up an approximation scheme and the solution space for the
inflow dominance case.
In Section 3, we show that, under appropriate assumptions,  the approximate solution stays in the solution space in each iteration. The lower bound estimate for the temperature tensor in the critical case ($\nu=-1/2$) is made. In Section 4, we prove the Cauchy estimate to complete the proof of Theorem \ref{Main1}. Section 5 is devoted to the proof of Theorem \ref{Main2}. Since many parts overlap with proof of Theorem \ref{Main1}, we focus on the difference of the argument.
%
%
%
%
%

%
%
%
%

%
%
%
%
\section{Approximation scheme and solution space for inflow dominant case}
In the following,  we aim to construct the solution $f^n$ for
(\ref{ESBGK}). According to (\ref{macroscopic field}), $\rho^n$,
$U^n$, $T^n$ and $\Theta^n $ represent the hyrodynamic quantities
associated to $f^n$.  The
approximate solution approximate scheme reads:
\[
f^{n}(x,v)=f^n(x,v) 1_{v_1>0}+ f^n(x,v)1_{v_1<0},
\]
where $f^n_+$ and $f^n_-$ are determined iteratively by
\begin{align}\label{f+}
	\begin{split}
		f^{n+1}(x,v)&=e^{-\frac{x}{\tau|v_1|}}f^{n+1}(0,v)
		+\frac{1}{\tau|v_1|}\int_{0}^{x} e^{-\frac{x-y}{\tau|v_1|}}\mathcal{M}_{\nu}(f^n)dy
		\quad\text{if $v_{1}>0$}
	\end{split}
\end{align}
and
\begin{align}\label{f-}
	\begin{split}
		f^{n+1}(x,v)&=e^{-\frac{x}{\tau|v_1|}}f^{n+1}(1,v)
		+\frac{1}{\tau|v_1|}\int_{x}^1 e^{-\frac{1}{\tau|v_1|}}
		\mathcal{M}_{\nu}(f^n)dy
		\quad\text{if $v_{1}<0$}
	\end{split}
\end{align}
where $f^{n+1}(0,v)$ and $f^{n+1}(1,v)$ are defined by
\begin{align}\label{f_delta}
	\begin{split}
		f^{n+1}(0,v)&=\delta_1f_L(v)+\delta_2\left(\int_{v_1<0}f^n(0,v)|v_1|dv\right)M_w+\delta_3 f^n(0,Rv),\quad (v_1>0),\cr
		f^{n+1}(1,v)&=\delta_1f_R(v)+\delta_2\left(\int_{v_1>0}f^n(1,v)|v_1|dv\right)M_w+\delta_3 f^n(1,Rv),\quad (v_1>0).
	\end{split}
\end{align}
We will show that $\{f^n\}_n$ constructed from the above scheme satisfies several uniform-in-$n$ estimates. To do this in a more systematical way, we define two solution spaces. First we define the following solution space for the non-critical case $(-1/2<\nu<1)$:
\begin{align*}
\Omega_1=\Big\{f\in L^{\infty}\left([0,1]; L^1_2(\mathbb{R}^3_v)\right)\cap L^1_{\gamma,\langle v\rangle}(\mathbb{R}^3_v)~|&~f  \mbox{ satisfies } (\mathcal{A}_1), (\mathcal{B}_1), (\mathcal{C}_1), (\mathcal{D}_1)
\Big\}
\end{align*}
where $(\mathcal{A}_1)$, $(\mathcal{B}_1)$, $(\mathcal{C}_1)$ and $(\mathcal{D}_1)$ denote
\begin{itemize}
\item ($\mathcal{A}_1$) $f$ is non-negative:
\[
f(x,v)\geq0 \mbox{ for }x,v\in [0,1]\times \mathbb{R}^3.
\]
\item ($\mathcal{B}_1$) The macroscopic field is well-defined:
\begin{align*}
&\int_{\mathbb{R}^3}f(x,v)dv\geq a_{\ell,1},\quad \int_{\mathbb{R}^3}f(x,v)(1+|v|^2)dv\leq 
2C_{LR,1}.
\end{align*}
\item ($\mathcal{C}_1$) The temperature tensor is well-defined:
\begin{align*}
	C^1_{\nu}\delta_1^2\frac{\gamma_{\ell,1}}{3C_{LR,1}^2}\leq\kappa^{\top}\left\{\mathcal{T}_{\nu}\right\}\kappa\leq \frac{2 }{{3a_{\ell,1}}}C^2_{\nu}C_{LR,1}.
	\end{align*}
\item ($\mathcal{D}_1$) The trace is well-defined:
\begin{align*}
\|f\|_{L^1_{\gamma,|v_1|,\pm}}\leq 2\|f_{LR}\|_{L^1_{\gamma,|v_1|}},\quad \|f\|_{L^1_{\gamma,\langle v\rangle,\pm}}\leq 2C_{LR,1}.
\end{align*}
\end{itemize}
For the critical case $\nu=-1/2$, we define 
\begin{align*}
\Omega_2=\Big\{f\in L^{\infty}\left([0,1]; L^1_2(\mathbb{R}^3_v)\right)\cap L^1_{\gamma,\langle v\rangle}(\mathbb{R}^3_v)~|&~f  \mbox{ satisfies } (\mathcal{A}_2), (\mathcal{B}_2), (\mathcal{C}_2), (\mathcal{D}_2)
\Big\}
\end{align*}
 where $(\mathcal{A}_2)$, $(\mathcal{B}_2)$,  $(\mathcal{C}_2)$ and $(\mathcal{D}_2)$ denote
\begin{itemize}
\item ($\mathcal{A}_2$) $f$ is non-negative:
\[
f(x,v)\geq0 \mbox{ for }x,v\in [0,1]\times \mathbb{R}^3.
\]
\item ($\mathcal{B}_2$) The macroscopic field is well-defined:
\begin{align*}
&\int_{\mathbb{R}^3}f(x,v)dv\geq a_{\ell,1},\quad \int_{\mathbb{R}^3}f(x,v)(1+|v|^2)dv\leq 2C_{LR}.
\end{align*}
\item ($\mathcal{C}_2$) The temperature tensor is well-defined:
\begin{align*}
	\delta_1\frac{a_{-1/2}}{2C_{LR,1}}\leq\kappa^{\top}\left\{\mathcal{T}_{-1/2}\right\}\kappa\leq \frac{3}{2a_{\ell,1}}C_{LR,1}.
\end{align*}
\item ($\mathcal{D}_2$) The trace satisfies:
\begin{align*}
\|f\|_{L^1_{\gamma,|v_1|,\pm}}\leq 2\|f_{LR}\|_{L^1_{\gamma,|v_1|}},\quad \|f\|_{L^1_{\gamma,\langle v\rangle,\pm}}\leq 2C_{LR,1}.
\end{align*}
\end{itemize}

%
%
%
%

Before we move on to the proof of uniform estimates for $f^n$, we record a few estimates that will be fruitfully used throughout the paper.
\begin{lemma}\label{Max decomposition} $(1)$ Let $f\in\Omega_1$. Then there exists  positive constants $C$  depending only on the quantities (\ref{CLR}), (\ref{quantities}) and $\gamma_{\ell,1}$ such that
\[
\mathcal{M}_{\nu}(f)\leq  Ce^{-C|v|^2}.
\]
$(2)$ Let $f\in\Omega_2$. Then there exists  positive constants $C$  depending only on the quantities (\ref{CLR}), (\ref{quantities}) and $a_{-1/2}$ such that
\[
\mathcal{M}_{-1/2}(f)\leq  Ce^{-C|v|^2}.
\]
\end{lemma}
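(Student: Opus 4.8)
The plan is to read off from the constraints defining $\Omega_1$ (resp. $\Omega_2$) that the symmetric matrix $\mathcal{T}_{\nu}$ has all its eigenvalues trapped between two strictly positive constants, and then to combine this spectral control with the zeroth- and second-moment bounds to dominate $\mathcal{M}_{\nu}(f)$ by a fixed Gaussian. \textbf{Step 1 (spectral bounds).} Since $\mathcal{T}_{\nu}=(1-\nu)T\mathbb{I}_3+\nu\Theta$ is symmetric, the Rayleigh quotient $\kappa^{\top}\mathcal{T}_{\nu}\kappa$ ranges, as $\kappa$ varies over the unit sphere, exactly over the interval $[\lambda_{\min},\lambda_{\max}]$ spanned by the smallest and largest eigenvalues of $\mathcal{T}_{\nu}$. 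Hence the two-sided bound $(\mathcal{C}_1)$ is equivalent to
\[
0<C^1_{\nu}\delta_1^2\frac{\gamma_{\ell,1}}{3C_{LR,1}^2}\leq\lambda_{\min}\leq\lambda_{\max}\leq\frac{2}{3a_{\ell,1}}C^2_{\nu}C_{LR,1},
\]
where strict positivity on the left uses $\gamma_{\ell,1}>0$. In particular $\mathcal{T}_{\nu}$ is positive definite.

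\textbf{Step 2 (controlling the Gaussian).} From Step 1, $\det(2\pi\mathcal{T}_{\nu})=(2\pi)^3\det(\mathcal{T}_{\nu})\geq(2\pi)^3\lambda_{\min}^3$, so the normalizing prefactor obeys $\tfrac{1}{\sqrt{\det(2\pi\mathcal{T}_{\nu})}}\leq\tfrac{1}{\sqrt{(2\pi)^3\lambda_{\min}^3}}$. Likewise $\mathcal{T}_{\nu}^{-1}\geq\lambda_{\max}^{-1}\mathbb{I}_3$ in the sense of quadratic forms, so that $(v-U)^{\top}\mathcal{T}_{\nu}^{-1}(v-U)\geq\lambda_{\max}^{-1}|v-U|^2$. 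Together with $\rho\leq 2C_{LR,1}$ from $(\mathcal{B}_1)$, this yields
\[
\mathcal{M}_{\nu}(f)\leq\frac{2C_{LR,1}}{\sqrt{(2\pi)^3\lambda_{\min}^3}}\exp\left(-\frac{1}{2\lambda_{\max}}|v-U|^2\right).
\]

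\textbf{Step 3 (from $|v-U|$ to $|v|$).} It remains to absorb the drift $U$. Using $|v|\leq\frac12(1+|v|^2)$ with the moment bound in $(\mathcal{B}_1)$ and the lower density bound $\rho\geq a_{\ell,1}$, we get $|U|\leq\rho^{-1}\int_{\mathbb{R}^3}f|v|\,dv\leq C_{LR,1}/a_{\ell,1}=:M$. The elementary inequality $|v-U|^2\geq\frac12|v|^2-|U|^2$ then gives $\exp\bigl(-\frac{1}{2\lambda_{\max}}|v-U|^2\bigr)\leq e^{M^2/(2\lambda_{\max})}\exp\bigl(-\frac{1}{4\lambda_{\max}}|v|^2\bigr)$. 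Combining the three steps produces $\mathcal{M}_{\nu}(f)\leq Ce^{-C|v|^2}$ with $C$ depending only on the quantities in \eqref{CLR}, \eqref{quantities} and $\gamma_{\ell,1}$, proving $(1)$. Part $(2)$ is identical verbatim, with $(\mathcal{C}_1)$, $(\mathcal{B}_1)$ replaced by $(\mathcal{C}_2)$, $(\mathcal{B}_2)$; the spectral interval becomes $\bigl[\delta_1 a_{-1/2}/(2C_{LR,1}),\ \tfrac{3}{2a_{\ell,1}}C_{LR,1}\bigr]$, whose strict positivity rests on $a_{-1/2}>0$.

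\textbf{Main obstacle.} There is no genuinely hard step: the entire content is the translation of the Rayleigh-quotient bound $(\mathcal{C}_i)$ into uniform two-sided eigenvalue bounds, after which the determinant, the inverse, and the mean-velocity shift are all handled by routine linear algebra and the moment bounds. The only point requiring care is that the lower spectral bound $\lambda_{\min}$ must be strictly bounded away from zero uniformly — this is precisely where $\gamma_{\ell,1}>0$ (resp. $a_{-1/2}>0$) is used — since otherwise the prefactor $\lambda_{\min}^{-3/2}$ would degenerate and no fixed Gaussian majorant would exist.
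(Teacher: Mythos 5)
Your proposal is correct and follows essentially the same route as the paper: the paper likewise bounds $|U|$ via $(\mathcal{B}_2)$ and the density lower bound, converts $(\mathcal{C}_2)$ into an eigenvalue/determinant lower bound and an upper bound on the quadratic form of $\mathcal{T}_{\nu}^{-1}$, and concludes. Your Step 3 merely spells out the absorption of the drift $U$, which the paper leaves as "follows immediately."
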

\begin{proof}
We only consider the proof of (2) to avoid repetition. We first note that the macroscopic velocity is well-defined in $\Omega_2$:
\begin{equation}\label{i1}
|U|=\frac{|\rho U|}{\rho}=\frac{\Big|\int_{\mathbb{R}^3}fvdv\Big|}{\int_{\mathbb{R}^3}fdv}\leq \frac{C_{LR}}{a_{\ell,1}}.
\end{equation}
On the other hand, $(\mathcal{C}_2)$ implies that
\begin{equation}\label{i2}
-(v-U)^{\top}\{\mathcal{T}\}^{-1}_{-1/2}(v-U)\leq - 
\frac{3}{2a_{\ell,1}}C_{LR,1}|v-U|^2,
\end{equation}
and
\begin{equation}\label{i3}
\det{\mathcal{T}_{-1/2}}=\lambda_1\lambda_2\lambda_3\geq 
\left\{\delta_1\frac{a_{-1/2,1}}{2C_{LR,1}}\right\}^3,
\end{equation}
where  $\lambda_i$ ($i=1,2,3$) to be the eigenvalues of $\mathcal{T}_{\nu}$. Note that $\mathcal{T}_{\nu}$
is diagonalizable since it's symmetric. The desired then estimate follows immediately from (\ref{i1}), (\ref{i2})
and (\ref{i3}). 
\end{proof}
%
%
%
%
The following lemma can be found in \cite{Bang Y}. We present the detailed proof for the readers' convenience.
\begin{lemma}\label{main estimate} Let $C$ be a fixed positive constants. Then we have
\begin{align*}
\int^x_0\int_{v_1>0}\frac{1}{\tau|v_1|}e^{-\frac{x-y}{\tau|v_1|}}e^{-Cv_1^2}dv_1dy\leq C\left(\frac{\ln \tau+1}{\tau}\right),\hspace{1cm} x\in [0,1]
\end{align*}
where $C>0$ depends only on quantities in (\ref{CLR}) and (\ref{quantities}).
\end{lemma}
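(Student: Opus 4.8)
The plan is to carry out the $y$-integration explicitly first, and thereby reduce the entire estimate to a single one-dimensional integral in $v_1$ that can be controlled by splitting the velocity domain. For each fixed $v_1>0$ the substitution $u=x-y$ gives
\begin{align*}
\int_0^x \frac{1}{\tau v_1}\,e^{-\frac{x-y}{\tau v_1}}\,dy = 1 - e^{-\frac{x}{\tau v_1}},
\end{align*}
so the double integral collapses to
\begin{align*}
\int_{v_1>0}\Big(1-e^{-\frac{x}{\tau v_1}}\Big)\,e^{-Cv_1^2}\,dv_1.
\end{align*}

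The key elementary input will be the inequality $1-e^{-s}\leq \min\{1,s\}$ for $s\geq0$. Since $x\in[0,1]$, setting $s=\frac{x}{\tau v_1}\leq \frac{1}{\tau v_1}$ yields $1-e^{-x/(\tau v_1)}\leq \min\{1,\,1/(\tau v_1)\}$. I would then split the $v_1$-integral at the threshold $v_1=1/\tau$, where the two competing bounds balance. On the small-velocity region $0<v_1<1/\tau$ I bound the exponential factor by $1$ together with $e^{-Cv_1^2}\leq1$, producing a contribution $\leq \int_0^{1/\tau}dv_1 = 1/\tau$. On the large-velocity region $v_1>1/\tau$ I instead use $1-e^{-x/(\tau v_1)}\leq 1/(\tau v_1)$, which leaves me with
\begin{align*}
\frac{1}{\tau}\int_{1/\tau}^\infty \frac{e^{-Cv_1^2}}{v_1}\,dv_1.
\end{align*}

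The only real obstacle is the logarithmic singularity of $1/v_1$ near the origin, and this is precisely the source of the $\ln\tau$ appearing in the claimed bound. To handle it I would split once more at $v_1=1$, using $\tau>1$ (guaranteed in the applications since $\tau>K_1$). On $[1/\tau,1]$ I simply drop the Gaussian and integrate $1/v_1$ to obtain $\ln\tau$; on $[1,\infty)$ the Gaussian renders $\int_1^\infty e^{-Cv_1^2}/v_1\,dv_1$ a finite constant $C'$ depending only on $C$. Collecting the three pieces gives
\begin{align*}
\frac{1}{\tau}+\frac{\ln\tau+C'}{\tau}\leq C\Big(\frac{\ln\tau+1}{\tau}\Big),
\end{align*}
which is the assertion. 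The estimate is thus elementary once the $y$-integration is done; the whole content lies in correctly locating the splitting thresholds $v_1=1/\tau$ and $v_1=1$, the former balancing the two bounds on $1-e^{-x/(\tau v_1)}$ and the latter isolating the logarithmic tail of the weight $1/v_1$ against the Gaussian decay.
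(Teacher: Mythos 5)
Your proof is correct and follows essentially the same route as the paper: perform the $y$-integration first to reduce to $\int_{v_1>0}(1-e^{-x/(\tau v_1)})e^{-Cv_1^2}\,dv_1$, then split the velocity domain near $v_1=1/\tau$ and at a second threshold to isolate the logarithmic contribution from the middle region. The only cosmetic differences are that you use the clean bound $1-e^{-s}\le\min\{1,s\}$ where the paper expands $1-e^{-1/(\tau|v_1|)}$ in a full Taylor series (your bound is just its first term), and you place the upper cut at $v_1=1$ rather than $v_1=\tau$; both choices yield the same $C(\ln\tau+1)/\tau$ estimate.
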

First, we divide the domain of integration as follows:
\begin{eqnarray*}
\left\{\int^x_0\int_{|v_1|<\frac{1}{\tau}}+\int^x_0\int_{\frac{1}{\tau}\leq|v_1|<\tau}+\int^x_0\int_{|v_1|\geq\tau}\right\}\frac{1}{\tau|v_1|}e^{-\frac{x-y}{\tau|v_1|}}e^{-Cv_1^2}dv_1dy
\equiv I_1+I_2+I_3.
\end{eqnarray*}
(a) The estimate of $I_1$:
For $I_1$, we integrate on $y$ first to get
\begin{align*}
I_1&=\int_{|v_1|<\frac{1}{\tau}}\left\{\int_{0}^x\frac{1}{\tau|v_1|}e^{-\frac{x-y}{\tau|v_1|}}dy\right\}e^{-Cv_1^2}dv_1\cr
&=\frac{1}{a_{\ell,1}}\int_{|v_1|<\frac{1}{\tau}}\left\{1-e^{-\frac{x}{\tau|v_1|}}\right\}e^{-Cv_1^2}dv_1\cr
&\leq \int_{|v_1|<\frac{1}{\tau}}dv_1\cr
&\leq \frac{1}{\tau} .
\end{align*}

\noindent(b) The estimate of $I_2$: For this case, we find
\begin{align*}
	I_2
	&\leq\frac{1}{a_{\ell,1}}\int_{\frac{1}{\tau}\leq|v_1|\leq\tau}1-e^{-\frac{a_{\ell,1}x}{\tau|v_1|}}\,dv_1,
\end{align*}
and apply the Taylor expansion to $1-e^{-\frac{1}{\tau|v_1|}}$ to get
\begin{align*}
I_{2}&\leq\int_{\frac{1}{\tau}<|v_1|<\tau}
\left\{\left(\frac{1}{\tau|v_1|}\right)-\frac{1}{2!}\left(\frac{1}{\tau|v_1|}\right)^2+\frac{1}{3!}\left(\frac{1}{\tau|v_1|}\right)^3+\cdots\right\}dv_1\cr
&\leq\Big|\int_{\frac{1}{\tau}}^{\tau}\frac{1}{\tau r}\,dr\Big|+\Big|\int_{\frac{1}{\tau}}^{\tau}\frac{1}{2!}\left(\frac{1}{\tau r}\right)^2dr\Big|+
\Big|\int_{\frac{1}{\tau}}^{\tau}\frac{1}{3!}\left(\frac{1}{\tau r}\right)^3dr \Big|+\cdots\cr
&=\frac{1}{\tau}\ln \tau^2+\frac{1}{2!}\frac{1}{\tau}\frac{\tau^2-1}{\tau^2}
+\frac{1}{2\cdot 3!}\frac{1}{\tau}\frac{\tau^4-1}{\tau^4}+
\frac{1}{3\cdot 4!}\frac{1}{\tau}\frac{\tau^6-1}{\tau^6}
\cdots\cr
&\leq \frac{1}{\tau}\ln \tau^2+\frac{e}{\tau}.
\end{align*}
\noindent$(c)$ We compute the remaining $I_3$ as
\begin{eqnarray*}
I_3
\leq\int_{0}^1\int_{|v_1|>\tau}\frac{1}{\tau|v_1|}e^{-Cv_1^2}dv_1dy
\leq\frac{1}{\tau^2}\int_{|v_1|>\tau}e^{-Cv_1^2}dv_1
\leq C_{\ell,u}\frac{1}{\tau^2}.
\end{eqnarray*}
Finally, we combine the estimates $(a)$, $(b)$, $(c)$ to obtain the desired result.
%
%
%
%
\section{ Uniform-in-$n$ estimates of $f^n$ for inflow dominant case}
The main result of this section is the following proposition
\begin{proposition}\label{fixed point 1}
\noindent$(1)$ Let $-1/2<\nu<1$. Assume $f_{LR}$ satisfies the conditions of Theorem 1.1 (1). Then $f^{n}\in \Omega_1$ for all $n$.\newline
\noindent$(2)$ Let $\nu=-1/2$. Assume $f_{LR}$ satisfies the conditions of Theorem 1.1 (2). Then, $f^{n}\in \Omega_2$ for all $n$.
\end{proposition}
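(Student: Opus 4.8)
The plan is to argue by induction on $n$: assuming $f^n\in\Omega_1$ (resp. $\Omega_2$), I would verify that each of the four defining properties is inherited by $f^{n+1}$ through the scheme (\ref{f+})--(\ref{f_delta}), the base case being settled by initializing $f^0$ inside the relevant solution space. Throughout I would use Lemma \ref{Max decomposition}, which gives the pointwise Gaussian bound $\mathcal{M}_{\nu}(f^n)\le Ce^{-C|v|^2}$ as soon as $f^n$ lies in the space, and Lemma \ref{main estimate}, which shows that the gain term built from $\mathcal{M}_{\nu}(f^n)$ contributes only an $O((\ln\tau+1)/\tau)$ amount after integration; the smallness of this contribution for $\tau>K_1$ is what makes all estimates close. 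Property $(\mathcal{A})$ is then immediate, since (\ref{f+})--(\ref{f-}) express $f^{n+1}$ as an exponential factor times a boundary value plus an integral of $\mathcal{M}_{\nu}(f^n)$, and $f_{LR}\ge0$, $\delta_i\ge0$, $M_w>0$, $\mathcal{M}_{\nu}(f^n)\ge0$ together with the non-negativity of the traces in (\ref{f_delta}) force $f^{n+1}\ge0$.

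For $(\mathcal{B})$ and $(\mathcal{D})$ I would integrate (\ref{f+})--(\ref{f-}) against $1$ and $(1+|v|^2)$. The energy upper bound splits into a boundary part, dominated by $\delta_1 f_{LR}$ because the diffusive and specular pieces carry the small factor $\delta_2+\delta_3$, and a gain part that is $O((\ln\tau+1)/\tau)$ by Lemmas \ref{Max decomposition} and \ref{main estimate}, so the total stays below $2C_{LR,1}$ for $\tau>K_1$ and $\delta_2+\delta_3<\epsilon$. The density lower bound comes from discarding the non-negative gain term and keeping only the transported inflow: since $x\in[0,1]$ and $\tau>1$ one has $e^{-x/(\tau|v_1|)}\ge e^{-1/|v_1|}$, so integrating the retained term $\ge\delta_1 e^{-1/|v_1|}f_{LR}$ reproduces $a_{\ell,1}$ of (\ref{quantities}). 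The trace bounds $(\mathcal{D})$ follow the same pattern: the inward trace is read off from (\ref{f_delta}) and bounded using the normalization $\|M_w\|_{L^1_{\gamma,|v_1|,\pm}}=1$ and the smallness of $\delta_2,\delta_3$, while the outward trace is the inflow transported across the slab plus the small emitted Maxwellian, again controlled by Lemma \ref{main estimate}.

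The heart of the proposition is property $(\mathcal{C})$, the two-sided control of $\kappa^{\top}\mathcal{T}_{\nu}\kappa$. The upper bound is routine: writing $\kappa^{\top}\mathcal{T}_{\nu}\kappa=(1-\nu)T+\nu\,\kappa^{\top}\Theta\kappa$ and using $\kappa^{\top}\Theta\kappa\le 3T$ with $T\le\frac{1}{3\rho}\int f|v|^2\,dv\le \frac{2C_{LR,1}}{3a_{\ell,1}}$ gives the ceiling. The lower bound is where the cases diverge. In the non-critical regime $-1/2<\nu<1$ I would invoke the equivalence relation (\ref{ER}) to reduce to a lower bound on the scalar temperature $T$, then bound the variance $\int f(v_1-U_1)^2\,dv$ from below by a two-blob argument: the transported inflow supplies mass moving with $v_1>0$ from the left endpoint and with $v_1<0$ from the right, whose one-sided fluxes multiply to $\gamma_{\ell,1}$ of (\ref{gamma}), yielding the factor $\delta_1^2\gamma_{\ell,1}/C_{LR,1}^2$.

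In the critical case $\nu=-1/2$ the first inequality in (\ref{ER}) is vacuous, so I would instead use the directional-temperature identity $\kappa^{\top}\mathcal{T}_{-1/2}\kappa=\frac{1}{\rho}\int f\big(|v-U|^2-((v-U)\cdot\kappa)^2\big)\,dv$ and bound it below uniformly over $|\kappa|=1$. Two ingredients make this work. First, the bulk velocity must be kept small: the flux identity $\partial_x\int v_1 f^{n+1}\,dv=\frac1\tau\int(\mathcal{M}_{\nu}(f^n)-f^{n+1})\,dv$ shows $\rho U_1$ is constant up to an $O(\tau^{-1})$ defect, which with boundary-flux bookkeeping yields (\ref{U1}), while the symmetry hypothesis $(P_2)$, together with the evenness of $M_w$ and of specular reflection in $v_2,v_3$, propagates $U_2^{n+1}=U_3^{n+1}=0$. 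Second, with $U$ controlled one replaces $|v-U|^2-((v-U)\cdot\kappa)^2$ by $|v|^2-(v\cdot\kappa)^2$ up to a controllable error and restricts to the transported inflow $f^{n+1}\ge\delta_1 e^{-1/|v_1|}f_{LR}$, producing the quantity $a_{-1/2,1}$ of (\ref{a1}); the leftover flux-discrepancy term is absorbed via the smallness of $|\int_{v_1>0}f_L|v_1|dv-\int_{v_1<0}f_R|v_1|dv|$ and of $\delta_2,\delta_3,\tau^{-1}$. Closing this lower bound, and in particular quantifying the error in passing from $U$-centred to origin-centred directional energies while keeping uniformity over all unit $\kappa$, is the delicate point I expect to be the main obstacle of the whole argument.
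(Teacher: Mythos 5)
Your proposal follows essentially the same route as the paper: induction through the scheme, Lemmas \ref{Max decomposition} and \ref{main estimate} to make the gain term an $O((\ln\tau+1)/\tau)$ perturbation, the product of one-sided fluxes (obtained in the paper's Lemma \ref{gl} via Cauchy--Schwarz and the factorization $a^2-b^2=(a-b)(a+b)$) for the non-critical lower bound on $T$, and the directional-temperature identity together with the flux-discrepancy control of $U_1$ (Lemma \ref{23}) for the critical case. One correction: the hypothesis $(P_2)$ does \emph{not} propagate $U_2^{n+1}=U_3^{n+1}=0$ exactly, since the specular term $\delta_3 f^n(0,Rv)$ and the Gaussian gain $\mathcal{M}_{\nu}(f^n)$ contribute nonzero $v_2,v_3$-moments; the paper's Lemma \ref{23}(2) only proves these moments are $O(\delta_3)+O((\ln\tau+1)/\tau)$, which is what the argument actually requires. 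Also, the passage from $U$-centred to origin-centred directional energies that you single out as the main obstacle is an exact algebraic identity (see (\ref{tbt})), with correction term exactly $\rho\{|U|^2-(U\cdot\kappa)^2\}\le\rho|U|^2$, uniformly in $\kappa$, so no further estimate is needed there.
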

We divide the proof into Lemma \ref{al}, Lemma \ref{au}, and Lemma \ref{gl}.
\begin{lemma}\label{al} Assume $f^n\in \Omega_1$ or $\Omega_2$. Then we have
\begin{align*}
f^{n+1}\geq0\quad\mbox{ and }\quad \int_{\mathbb{R}^3}f^{n+1}dv\geq \delta_1a_{\ell,1}.
\end{align*}
\end{lemma}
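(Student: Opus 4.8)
The plan is to track signs through the mild formulation (\ref{f+})--(\ref{f_delta}) and then, for the density bound, discard everything except the transported inflow contribution. The whole statement is a positivity argument, so the work is in verifying that each discarded piece really is non-negative and in bounding the transport exponential from below uniformly in $x$.

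First I would establish $f^{n+1}\geq 0$. Since $f^n$ lies in $\Omega_1$ (or $\Omega_2$) it satisfies $(\mathcal{A}_1)$ (resp. $(\mathcal{A}_2)$), so $f^n\geq 0$; hence the flux integrals $\int_{v_1<0}f^n(0,v)|v_1|\,dv$ and $\int_{v_1>0}f^n(1,v)|v_1|\,dv$ and the reflected terms $f^n(0,Rv),f^n(1,Rv)$ are all non-negative. Together with $f_L,f_R\geq 0$ (from $(P_1)$) and $M_w\geq 0$, this shows that the boundary data $f^{n+1}(0,v)$ and $f^{n+1}(1,v)$ prescribed in (\ref{f_delta}) are non-negative. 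Next, conditions $(\mathcal{C}_1)$/$(\mathcal{C}_2)$ bound the quadratic form $\kappa^{\top}\mathcal{T}_{\nu}\kappa$ below by a positive constant, so $\mathcal{T}_{\nu}$ is positive definite and $\mathcal{M}_{\nu}(f^n)$ is a genuine non-degenerate Gaussian, hence strictly positive. Since the kernels $e^{-\frac{x-y}{\tau|v_1|}}$ are positive, every term on the right-hand side of (\ref{f+})--(\ref{f-}) is non-negative, giving $f^{n+1}\geq 0$.

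For the density estimate I would drop the non-negative $\mathcal{M}_{\nu}$-integral together with the $\delta_2,\delta_3$ boundary pieces, keeping only the transported inflow term. This yields $f^{n+1}(x,v)\geq \delta_1 f_L(v)\,e^{-\frac{x}{\tau|v_1|}}$ for $v_1>0$ and, analogously, $f^{n+1}(x,v)\geq \delta_1 f_R(v)\,e^{-\frac{1-x}{\tau|v_1|}}$ for $v_1<0$. Because $x\in[0,1]$ and $\tau>K_1\geq 1$, the transport exponential is bounded below by $e^{-\frac{1}{|v_1|}}$ uniformly in $x$ in both cases. Integrating over $\{v_1>0\}$ and $\{v_1<0\}$ and recombining through $f_{LR}=f_L 1_{v_1>0}+f_R 1_{v_1<0}$ gives
\[
\int_{\mathbb{R}^3}f^{n+1}(x,v)\,dv\geq \delta_1\int_{\mathbb{R}^3}e^{-\frac{1}{|v_1|}}f_{LR}(v)\,dv=\delta_1 a_{\ell,1},
\]
which is the claimed lower bound.

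I do not expect a genuine obstacle here, since the computation is entirely sign-driven; the two points demanding care are the justification that $\mathcal{M}_{\nu}(f^n)\geq 0$ (which must be read off from the positive-definiteness of $\mathcal{T}_{\nu}$ guaranteed by $(\mathcal{C}_1)$/$(\mathcal{C}_2)$, rather than assumed), and the uniform-in-$x$ exponential lower bound, which rests on the standing hypothesis $\tau>K_1$. I would therefore fix $K_1\geq 1$ from the outset so that $e^{-\frac{x}{\tau|v_1|}}\geq e^{-\frac{1}{|v_1|}}$ holds for every $x\in[0,1]$, making the final step valid uniformly in $n$.
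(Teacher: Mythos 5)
Your proposal is correct and follows essentially the same route as the paper: discard the non-negative Gaussian source and the $\delta_2,\delta_3$ boundary contributions, keep only the transported inflow term $\delta_1 e^{-x/(\tau|v_1|)}f_L$ (resp. $\delta_1 e^{-(1-x)/(\tau|v_1|)}f_R$), bound the exponential below by $e^{-1/|v_1|}$ using $\tau>1$, and integrate to recover $\delta_1 a_{\ell,1}$. The only difference is that you spell out the non-negativity of $\mathcal{M}_{\nu}(f^n)$ via the positive-definiteness of $\mathcal{T}_{\nu}$ guaranteed by $(\mathcal{C}_1)$/$(\mathcal{C}_2)$, which the paper leaves implicit.
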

\begin{proof}
 From (\ref{mild f+}) and (\ref{mild f-}), we find
\begin{align*}
f^{n+1}&\geq \delta_1 e^{-\frac{x}{\tau|v_1|}}f_{L}(v)1_{v_1>0}
+\delta_1 e^{-\frac{x}{\tau|v_1|}}f_{R}(v)1_{v_1<0}\geq\delta_1e^{-\frac{C_{LR,1}}{|v_1|}}f_{LR}.
\end{align*}
Here, we assumed $\tau>1$ without loss of generality.
Integrating with respect to $dv$, we obtain the desired lower bound:
\begin{align*}
\int_{\mathbb{R}^3}f^{n+1}dv
\geq\delta_1\int_{\mathbb{R}^3}e^{-\frac{1 }{|v_1|}}f_{LR}dv
= \delta_1a_{\ell,1}.
\end{align*}

\end{proof}

\begin{lemma}\label{auu}
	\noindent$(1)$ Let $f^n\in \Omega_1$ or $\Omega_2$. Then we have
	\begin{align*}
	\|f^{n+1}\|_{L^1_{\gamma,|v_1|,+}},~ \|f^{n+1}\|_{L^1_{\gamma,|v_1|,-}}\leq 2\|f_{LR}\|_{L^1_{\gamma,|v_1|}}.
	\end{align*}
	$(2)$ Let $f^n\in \Omega_1$ or $\Omega_2$. Then we have
	\begin{align*}
	\|f^{n+1}\|_{L^1_{\gamma,\langle v\rangle,+ }},~ \|f^{n+1}\|_{L^1_{\gamma,\langle v\rangle,- }}\leq 2\|M_w\|_{L^1_{\gamma,\langle v\rangle }}\|f_{LR}\|_{L^1_{\gamma,\langle v\rangle }}.
	\end{align*}
\end{lemma}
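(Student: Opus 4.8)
The plan is to treat the two boundary ("trace") norms separately according to the sign of the normal flux: the \emph{inward} traces $\|f^{n+1}\|_{L^1_{\gamma,|v_1|,-}}$ and $\|f^{n+1}\|_{L^1_{\gamma,\langle v\rangle,-}}$ are read off directly from the boundary assignment (\ref{f_delta}), whereas the \emph{outward} traces $\|f^{n+1}\|_{L^1_{\gamma,|v_1|,+}}$ and $\|f^{n+1}\|_{L^1_{\gamma,\langle v\rangle,+}}$ must be extracted from the mild representations (\ref{f+})--(\ref{f-}). Since the defining trace inequalities in $(\mathcal{D}_1)$ and $(\mathcal{D}_2)$ coincide, one argument covers both $f^n\in\Omega_1$ and $f^n\in\Omega_2$. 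I would first establish the inward bounds and then feed them into the outward bounds, which removes the apparent circularity (the outgoing value at one wall is governed, through transport, by the ingoing value at the opposite wall).

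For the inward traces I substitute (\ref{f_delta}) and split into the three boundary mechanisms. The $\delta_1$ term is literally $\delta_1\|f_{LR}\|_{L^1_{\gamma,\cdot,-}}$. In the $\delta_2$ term I use the flux normalization $\|M_w\|_{L^1_{\gamma,|v_1|,\pm}}=1$ for the $|v_1|$ norm, or bound the wall factor by $\|M_w\|_{L^1_{\gamma,\langle v\rangle}}$ for the $\langle v\rangle$ norm, while the accompanying outgoing fluxes $\int_{v_1<0}f^n(0,v)|v_1|dv$ and $\int_{v_1>0}f^n(1,v)|v_1|dv$ sum to $\|f^n\|_{L^1_{\gamma,|v_1|,+}}\leq 2\|f_{LR}\|_{L^1_{\gamma,|v_1|}}$ by $(\mathcal{D})$; this is precisely where the product structure $\|M_w\|\,\|f_{LR}\|=C_{LR,1}$ of the target appears. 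For the $\delta_3$ term the change of variables $v\mapsto Rv$, which fixes $|v_1|$ and $|v|^2$, turns $f^n(0,Rv)$ into the outgoing trace of $f^n$, again controlled by $(\mathcal{D})$. Collecting the three pieces, using $\delta_1+\delta_2+\delta_3=1$ and the elementary bound $|v_1|\leq 1+|v|^2$, yields inward estimates of the shape $(1+\delta_2+\delta_3)\|f_{LR}\|_{L^1_{\gamma,|v_1|}}$ and $(1+\delta_2+\delta_3)C_{LR,1}$, each already $\leq$ the claimed right-hand side.

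For the outward traces I insert (\ref{f+})--(\ref{f-}) evaluated at the opposite endpoint. The transport term carries a factor $e^{-1/(\tau|v_1|)}\leq 1$, so it is dominated by the corresponding inward trace just estimated. The gain term $\frac{1}{\tau|v_1|}\int e^{-(x-y)/(\tau|v_1|)}\mathcal{M}_{\nu}(f^n)\,dy$ is the crux: I bound $\mathcal{M}_{\nu}(f^n)\leq Ce^{-C|v|^2}$ by Lemma \ref{Max decomposition} and integrate. For the $|v_1|$ norm the weight $|v_1|$ cancels the $1/|v_1|$, and the elementary estimate $\int_0^1 e^{-y/(\tau|v_1|)}dy\leq 1$ gives an $O(\tau^{-1})$ contribution directly; for the $\langle v\rangle$ norm the weight $(1+|v|^2)$ is absorbed into the Gaussian and Lemma \ref{main estimate} produces an $O\!\big(\tau^{-1}(\ln\tau+1)\big)$ contribution.

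The main obstacle is closing the numerical constant $2$. The outward bounds emerge as (inward bound) $+$ (gain error), so one must absorb both the surplus $(\delta_2+\delta_3)$ coming from the diffusive and specular reflection and the $O\!\big(\tau^{-1}(\ln\tau+1)\big)$ gain-term error into the remaining budget. This is exactly why the hypotheses of Theorem \ref{Main1} demand $\delta_2+\delta_3$ small and $\tau>K_1$ large: choosing $K_1$ large and $\epsilon$ small forces $(\delta_2+\delta_3)+O\!\big(\tau^{-1}(\ln\tau+1)\big)\leq 1$, after scaling by $\|f_{LR}\|_{L^1_{\gamma,|v_1|}}$ (resp.\ by $C_{LR,1}$), which delivers the factor-$2$ bounds in both parts. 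All constants are explicit and depend only on the quantities in (\ref{CLR}) and (\ref{quantities}).
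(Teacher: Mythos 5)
Your proposal is correct and follows essentially the same route as the paper's proof: express each trace of $f^{n+1}$ via the boundary assignment and the mild formulation, control the resulting $\|f^n\|_{L^1_{\gamma,|v_1|,+}}$ factor by $(\mathcal{D}_i)$, bound the gain term through Lemma \ref{Max decomposition} and Lemma \ref{main estimate}, and absorb the $(\delta_2+\delta_3)$ surplus and the $O(\tau^{-1}(\ln\tau+1))$ error using the slack $\delta_1\|f_{LR}\|_{L^1_{\gamma,|v_1|}}$. The only cosmetic difference is that you estimate the inward trace first and feed it into the outward one, whereas the paper proceeds in the opposite order; the resulting inequalities are identical.
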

\begin{proof} (1) $\bullet$ Estimate for outflux $\|f^{n+1}\|_{L^1_{\gamma,|v_1|,+}}$: 
	Using (\ref{f-}), we can write $f^{n+1}(0,v)$ for $v_1<0$ as
	\begin{align}\label{int2}
	\begin{split}
	f^{n+1}(0,v)&=\delta_1 f_R+\delta_2\left(\int_{v_1>0}f(1,v)|v_1|dv\right)M_w(1)+\delta_3 f^n(1,Rv)\cr
	&+\frac{1}{\tau|v_1|}\int_{0}^1 e^{-\frac{y}{\tau|v_1|}}
	\mathcal{M}_{\nu}(f^n)dy
	\end{split}
	\end{align}
	which, in view of  Lemma \ref{main estimate}, yields
	\begin{align}\label{f22}
	\begin{split}
	&\int_{v_1<0}f^{n+1}(0,v)|v_1|dv\cr
	&\leq \delta_1\int_{v_1<0}f_R|v_1|dv+\delta_2\int_{v_1>0}f^{n}(1,v)|v_1|dv+\delta_3\int_{v_1>0}f^{n}(1,v)|v_1|dv\cr
	&+C_{\ell,u}\left(\frac{\ln \tau+1}{\tau}\right).
	\end{split}
	\end{align}
	Similarly,
		\begin{align}\label{f23}
	\begin{split}
	&\int_{v_1>0}f^{n+1}(1,v)|v_1|dv\cr
	&\leq \delta_1\int_{v_1>0}f_L|v_1|dv+\delta_2\int_{v_1<0}f^{n}(0,v)|v_1|dv+\delta_3\int_{v_1<0}f^{n}(0,v)|v_1|dv\cr
	&+C_{\ell,u}\left(\frac{\ln \tau+1}{\tau}\right).
	\end{split}
	\end{align}
	From (\ref{f22}) and $(\ref{f23})$, we obtain
	\begin{align}\label{ff11}
	\begin{split}
	\|f^{n+1}\|_{L^1_{\gamma,|v_1|,+}}&=\int_{v_1<0}f^{n+1}(0,v)|v_1|dv+\int_{v_1>0}f^{n+1}(1,v)|v_1|dv\cr
	&\leq \delta_1\|f_{LR}\|_{L^1_{\gamma,|v_1|}}+(\delta_2+\delta_3)\|f^n\|_{L^1_{\gamma,|v_1|,+}}\cr
	&+C_{\ell,u}\left(\frac{\ln \tau+1}{\tau}\right).
	\end{split}
	\end{align}
	
	Therefore, in view of $\mathcal{D}_i $ of $\Omega_i$ $(i=1,2)$, we see that
	\begin{align}\label{outflux}
	\begin{split}
	\|f^{n+1}\|_{L^1_{\gamma,|v_1|,+}}
	&\leq
	\delta_1\|f_{LR}\|_{L^1_{\gamma,|v_1|}}
	+2(\delta_2+\delta_3)\|f_{LR}\|_{L^1_{\gamma,|v_1|}}
	+C_{\ell,u}\left(\frac{\ln \tau+1}{\tau}\right)\cr
	&=2(\delta_1+\delta_2+\delta_3)\|f_{LR}\|_{L^1_{\gamma,|v_1|}}-\delta_1\|f_{LR}\|_{L^1_{\gamma,|v_1|}}
	+C_{\ell,u}\left(\frac{\ln \tau+1}{\tau}\right)\cr
	&\leq 2\|f_{LR}\|_{L^1_{\gamma,|v_1|}}
	\end{split}
	\end{align}
	for sufficiently large $\tau$.\newline

	\noindent $\bullet$ Estimate for influx: $\|f^{n+1}\|_{L^1_{\gamma,|v_1|,-}}$: When $v_1>0$, we have from the boundary condition (\ref{f+}) that
	\begin{align}\label{int1}
	f^{n+1}(0,v)=\delta_1 f_L+\delta_2\left(\int_{v_1<0}f^n(0,v)|v_1|dv\right)M_w(0)+\delta_3 f^n(0,Rv).
	\end{align}
	Integrate both sides with respect to $|v_1|dv$ on $v_1>0$ to get
	\begin{align}\label{f111}
	\begin{split}
	&\int_{v_1>0}f^{n+1}(0,v)|v_1|dv\cr
	&\quad= \delta_1\int_{v_1>0}f_L|v_1|dv+\delta_2\left(\int_{v_1>0}M_w(0)|v_1|dv\right)\left(\int_{v_1<0}f^{n}(0,v)|v_1|dv\right)\cr
	&\quad+\delta_3\int_{v_1>0}f^{n}(0,Rv)|v_1|dv\cr
	&\quad\leq \delta_1\int_{v_1>0}f_L|v_1|dv+(\delta_2+\delta_3)\int_{v_1<0}f^{n}(0,v)|v_1|dv,
	\end{split}
	\end{align}
	where we used $\int_{v_1>0}M_w(0)|v_1|dv=1$.
	Similarly, we estimate 	
	\begin{align}\label{f112}
	\begin{split}
	&\int_{v_1<0}f^{n+1}(1,v)|v_1|dv\cr
	&\quad= \delta_1\int_{v_1<0}f_R|v_1|dv+\delta_2\left(\int_{v_1<0}M_w(1)|v_1|dv\right)\left(\int_{v_1>0}f^{n}(1,v)|v_1|dv\right)\cr
	&\quad+\delta_3\int_{v_1>0}f^{n}(1,Rv)|v_1|dv\cr
	&\quad\leq \delta_1\int_{v_1<0}f_R|v_1|dv+(\delta_2+\delta_3)\int_{v_1>0}f^{n}(1,v)|v_1|dv,
	\end{split}
	\end{align}
	Combining (\ref{f111}) and (\ref{f112}) gives
	\begin{align}\label{ff11-}
	\begin{split}
	\|f^{n+1}\|_{L^1_{\gamma,|v_1|,-}}&=\int_{v_1>0}f^{n+1}(0,v)|v_1|dv+\int_{v_1<0}f^{n+1}(1,v)|v_1|dv\cr
	&\leq \delta_1\|f_{LR}\|_{L^1_{\gamma,|v_1|}}+(\delta_2+\delta_3)\|f^n\|_{L^1_{\gamma,|v_1|,+}}.
	\end{split}
	\end{align}
	Thanks to (\ref{outflux}), we have
	\begin{align*}
	\begin{split}
	\|f^{n+1}\|_{L^1_{\gamma,|v_1|,-}}&\leq\delta_1\|f_{LR}\|_{L^1_{\gamma,|v_1|}}+2(\delta_2+\delta_3)
	\|f_{LR}\|_{L^1_{\gamma,|v_1|}}\cr
	&\leq 2\|f_{LR}\|_{L^1_{\gamma,|v_1|}}.
	\end{split}
	\end{align*}
	This completes the proof of (1). The proof of (2) is identical. We omit the proof.
\end{proof}

\begin{lemma}\label{au}  Let $f^n\in \Omega_1$ or $\Omega_2$. Then we have
\begin{align*}
\int_{\mathbb{R}^3}f^{n+1}(1+|v|^2)dv\leq 4\|f_{LR}\|_{L^1_{\gamma,\langle v\rangle}}\|M_w\|_{L^1_{\gamma,\langle v\rangle}}.
\end{align*}
\end{lemma}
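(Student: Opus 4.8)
The plan is to reduce this interior energy bound to the inward trace bound already established in Lemma \ref{auu}, and to absorb the small interior contribution coming from the relaxation operator using the large-$\tau$ smallness of Lemma \ref{main estimate}. First I would split the velocity integral according to the sign of $v_1$ and insert the mild representations (\ref{f+}) and (\ref{f-}):
\[
\int_{\mathbb{R}^3}f^{n+1}(x,v)(1+|v|^2)dv=\int_{v_1>0}f^{n+1}(x,v)(1+|v|^2)dv+\int_{v_1<0}f^{n+1}(x,v)(1+|v|^2)dv.
\]
In the region $v_1>0$ the damping factor satisfies $e^{-\frac{x}{\tau|v_1|}}\leq1$, so the boundary part of the first integral is controlled by $\int_{v_1>0}f^{n+1}(0,v)(1+|v|^2)dv$; symmetrically, in $v_1<0$ it is controlled by $\int_{v_1<0}f^{n+1}(1,v)(1+|v|^2)dv$. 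The key observation is that these two quantities are precisely the two summands defining the inward trace norm $\|f^{n+1}\|_{L^1_{\gamma,\langle v\rangle,-}}$ (using $f^{n+1}\geq0$), and Lemma \ref{auu}(2) already bounds this by $2\|M_w\|_{L^1_{\gamma,\langle v\rangle}}\|f_{LR}\|_{L^1_{\gamma,\langle v\rangle}}$.

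It then remains only to estimate the two relaxation (source) terms. For the $v_1>0$ part this is $\int_{v_1>0}(1+|v|^2)\frac{1}{\tau|v_1|}\int_0^x e^{-\frac{x-y}{\tau|v_1|}}\mathcal{M}_{\nu}(f^n)\,dy\,dv$, and similarly for $v_1<0$. Here I would invoke the Gaussian pointwise bound $\mathcal{M}_{\nu}(f^n)\leq Ce^{-C|v|^2}$ from Lemma \ref{Max decomposition} (valid for $f^n\in\Omega_1$ or $\Omega_2$), absorb the polynomial weight via $(1+|v|^2)e^{-C|v|^2}\leq C'e^{-C'|v|^2}$, integrate out $v_2,v_3$ to produce a harmless constant, and reduce the remaining $v_1$-integral to exactly the form treated in Lemma \ref{main estimate}. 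This bounds each source term by $C\frac{\ln\tau+1}{\tau}$, with $C$ depending only on the quantities (\ref{CLR}) and (\ref{quantities}).

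Combining the two steps gives, uniformly for $x\in[0,1]$,
\[
\int_{\mathbb{R}^3}f^{n+1}(x,v)(1+|v|^2)dv\leq 2\|M_w\|_{L^1_{\gamma,\langle v\rangle}}\|f_{LR}\|_{L^1_{\gamma,\langle v\rangle}}+C\frac{\ln\tau+1}{\tau}.
\]
Since $\|M_w\|_{L^1_{\gamma,\langle v\rangle}}\|f_{LR}\|_{L^1_{\gamma,\langle v\rangle}}$ is a fixed positive constant (finite and nonzero by $(P_1)$), while $\frac{\ln\tau+1}{\tau}\to0$, choosing $\tau$ large enough (permitted, as Theorem \ref{Main1} already imposes $\tau>K_1$) makes the source contribution at most $2\|M_w\|_{L^1_{\gamma,\langle v\rangle}}\|f_{LR}\|_{L^1_{\gamma,\langle v\rangle}}$, which yields the claimed bound $4\|f_{LR}\|_{L^1_{\gamma,\langle v\rangle}}\|M_w\|_{L^1_{\gamma,\langle v\rangle}}$.

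The argument is essentially bookkeeping layered on top of Lemma \ref{auu}, so I do not expect a serious obstacle. The one genuinely technical point is adapting Lemma \ref{main estimate}, which is stated for the pure weight $e^{-Cv_1^2}$, to the present integrand carrying the extra factor $(1+|v|^2)$ and the transverse Gaussian in $(v_2,v_3)$; as indicated, this is handled by Gaussian domination of the polynomial together with Fubini and requires no new idea. The only thing to be careful about is matching norms correctly when identifying the boundary pieces with $\|f^{n+1}\|_{L^1_{\gamma,\langle v\rangle,-}}$ and confirming that the twofold slack between the trace bound and the target constant $4$ is exactly what the vanishing source term needs.
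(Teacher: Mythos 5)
Your argument is correct for the inequality as displayed, and it follows the paper's own decomposition: boundary contribution plus relaxation (source) contribution, with the source handled exactly as in the paper via the Gaussian bound of Lemma \ref{Max decomposition} and the integral estimate of Lemma \ref{main estimate}. The one substantive difference is the boundary term. You quote the inward trace estimate $\|f^{n+1}\|_{L^1_{\gamma,\langle v\rangle,-}}\leq 2\|M_w\|_{L^1_{\gamma,\langle v\rangle}}\|f_{LR}\|_{L^1_{\gamma,\langle v\rangle}}=2C_{LR,1}$ as a black box and then add the $O\bigl((\ln\tau+1)/\tau\bigr)$ source term on top, which can only land you at $2C_{LR,1}+o(1)$, i.e.\ at $4C_{LR,1}$ after taking $\tau$ large. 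The paper instead re-expands $f^{n+1}(0,v)$ and $f^{n+1}(1,v)$ through the boundary condition (\ref{f_delta}), uses $(\mathcal{D}_i)$ on $f^n$, and exploits the identity $\delta_1+\delta_2+\delta_3=1$ to exhibit a strictly positive leftover slack of order $\delta_1 C_{LR,1}$ into which the $(\ln\tau+1)/\tau$ error is absorbed, landing exactly at $2C_{LR,1}$. This is not a cosmetic difference: the solution-space condition $(\mathcal{B}_1)$ that Proposition \ref{fixed point 1} must verify reads $\int_{\mathbb{R}^3}f^{n+1}(1+|v|^2)dv\leq 2C_{LR,1}$, so your version proves the lemma as literally stated (whose constant $4$ appears to be a slip in the paper) but would not by itself close the induction $f^{n+1}\in\Omega_1$. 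The repair is exactly the paper's device: keep the $\delta$-decomposition of the boundary term explicit rather than citing the trace lemma, and use the $\delta_1$-slack (available since $\delta_1>1-\epsilon$ in the inflow dominant case) to eat the source term; without some strictly positive slack the additive $(\ln\tau+1)/\tau$ error can never be pushed below the threshold $2C_{LR,1}$.
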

\begin{proof}
We integrate (\ref{f+}) w.r.t $(1+|v|^2)dv$ on $v_1>0$ to get
\begin{align*}
\int_{v_1>0}f^{n+1}(x,v)(1+|v|^2)dv&=\int_{v_1>0}e^{-\frac{x}{\tau|v_1|}}f^{n+1}(0,v)(1+|v|^2)dv\cr
&+\int_{v_1>0}\int_{0}^{x}\frac{1}{\tau|v_1|}e^{-\frac{x-y}{\tau|v_1|}}\rho^n(y)\mathcal{M}_{\nu}(f_{n})(1+|v|^2)dydv.
\end{align*}
We compute the boundary terms in $\int_{\mathbb{R}^3}f^{n+1}(1+|v|^2)dv$ as
\begin{align*}
&\int_{v_1>0}e^{-\frac{x}{\tau|v_1|}}f^{n+1}(0,v)(1+|v|^2)dv\cr
&\qquad\leq\int_{v_1>0}f^{n+1}(0,v)(1+|v|^2)dv\cr
&\qquad\leq \delta_1\int_{v_1>0}f_L(v)(1+|v|^2)dv+\delta_2\left(\int_{v_1<0}f^n(0,v)|v_1|dv\right)\|M_w\|_{L^1_{\gamma,\langle v\rangle,-}}\cr
&\qquad+\delta_3\int_{v_1<0}f^n(0,v)(1+|v|^2)dv.
\end{align*}
Thanks to Lemma \ref{Max decomposition}, we can compute the source term as
\begin{align*}
&\int_{v_1>0}\int_{0}^{x}\frac{1}{\tau|v_1|}e^{-\frac{x-y}{\tau|v_1|}}\mathcal{M}_{\nu}(f)(1+|v|^2)dydv\cr
&\qquad\leq C_{LM,1}\int_{0}^{x}\int_{v_1>0} \frac{1}{\tau|v_1|}e^{-\frac{x-y}{\tau|v_1|}}e^{-Cv^2_1}dv_1dy\cr
&\qquad\equiv C_{LM,1}  \left(  \frac{\ln \tau +1  }{\tau}  \right).
\end{align*}
Therefore,
\begin{align*}
&\int_{v_1>0}f^{n+1}(x,v)(1+|v|^2)dv\cr
&\qquad= \delta_1\int_{v_1>0}f_L(v)(1+|v|^2)dv+\delta_2\left(\int_{v_1<0}f^n(0,v)|v_1|dv\right)\|M_w\|_{L^1_{\gamma,\langle v\rangle,-}}\cr
&\qquad+\delta_3\int_{v_1<0}f^n(0,v)(1+|v|^2)dv+C_{\ell,u}\left(\frac{\ln \tau+1}{\tau}\right).
\end{align*}
We can have similar result for the case $v_1<0$:
\begin{align*}
&\int_{v_1<0}f^{n+1}(x,v)(1+|v|^2)dv\cr
&\qquad= \delta_1\int_{v_1<0}f_R(v)(1+|v|^2)dv+\delta_2\left(\int_{v_1>0}f^n(1,v)|v_1|dv\right)\|M_w\|_{L^1_{\gamma,\langle v\rangle,+}}\cr
&\qquad+\delta_3\int_{v_1>0}f^n(1,v)(1+|v|^2)dv+C_{\ell,u}\left(\frac{\ln \tau+1}{\tau}\right).
\end{align*}
Summing up and using $\mathcal{D}_i$ of $\Omega_i$ $(i=1,2)$, we get
\begin{align*}
&\int_{\mathbb{R}^3}f^{n+1}(x,v)(1+|v|^2)dv\cr
&\qquad\leq \delta_1\|f_{LR}\|_{L^1_{\gamma,\langle v\rangle}}+\delta_2\|M_w\|_{L^1_{\gamma,\langle v\rangle}}\|f^n\|_{L^1_{\gamma,|v_1|,+}}+\delta_3\|f^n\|_{L^1_{\gamma,\langle v\rangle,+}}+C_{\ell,u}\left(\frac{\ln \tau+1}{\tau}\right)\cr
&\qquad\leq \delta_1\|f_{LR}\|_{L^1_{\gamma,\langle v\rangle}}+\delta_2\|M_w\|_{L^1_{\gamma,\langle v\rangle}}\|f^n\|_{L^1_{\gamma,\langle v_1\rangle}}+2\delta_3\|f_{LR}\|_{L^1_{\gamma,\langle v\rangle}}+C_{\ell,u}\left(\frac{\ln \tau+1}{\tau}\right).
\end{align*}
Since 
\[
1=\|M_w\|_{L^1_{\gamma,|v_1|}}\leq \|M_w\|_{L^1_{\gamma,\langle v\rangle}},
\]
we have
\begin{align*}
&\int_{\mathbb{R}^3}f^{n+1}(x,v)(1+|v|^2)dv\cr
&\qquad\leq \delta_1\|f_{LR}\|_{L^1_{\gamma,\langle v\rangle}}\|M_w\|_{L^1_{\gamma,\langle v\rangle}}
+\delta_2\|f^n\|_{L^1_{\gamma,\langle v_1\rangle}}\|M_w\|_{L^1_{\gamma,\langle v\rangle}}+2\delta_3\|f_{LR}\|_{L^1_{\gamma,\langle v\rangle}}\|M_w\|_{L^1_{\gamma,\langle v\rangle}}\cr
&\qquad+C_{\ell,u}\left(\frac{\ln \tau+1}{\tau}\right)\cr
&\qquad\leq 2(\delta_1+\delta_2+\delta_3)\|f^n\|_{L^1_{\gamma,\langle v_1\rangle}}\|M_w\|_{L^1_{\gamma,\langle v\rangle}}-(\delta_1+\delta_3)\|f_{LR}\|_{L^1_{\gamma,\langle v\rangle}}\|M_w\|_{L^1_{\gamma,\langle v\rangle}}\cr
&\qquad+C_{\ell,u}\left(\frac{\ln \tau+1}{\tau}\right)\cr
&\qquad\leq 2\|f_{LR}\|_{L^1_{\gamma,\langle v\rangle}}\|M_w\|_{L^1_{\gamma,\langle v\rangle}}.
\end{align*}
\end{proof}
In the following lemma, we show that (1) the bulk velocity in $x$ direction is controlled by the discrepancy of the boundary flux, and (2) the bulk velocities in other directions can be taken arbitrarily small.
\begin{lemma}\label{23}
Let $f^n\in\Omega_1$ or $\Omega_2$. \newline
\noindent$(1)$ For $i=1$, we have
\begin{align*}
\Big|\int_{\mathbb{R}^3}f^{n+1}v_1dv\Big|
&\leq\delta_1\left|\int_{v_1>0}f_L|v_1|dv-\int_{v_1<0}f_R|v_1|dv\right|
+2\left(\delta_2+\delta_3+\frac{2}{\tau}\right)C_{LR,1}
\end{align*}
where $C_{LR,1}$ denotes
\[
C_{LR,1}=\|f_{LR}\|_{L^1_{\gamma,\langle v\rangle}}\|M_w\|_{L^1_{\gamma,\langle v\rangle}}.
\]
\noindent$(2)$ For $i=2,3$, we have
\begin{align*}
\Big|\int_{\mathbb{R}^3}f^{n+1}v_idv\Big|\leq
2\delta_3\|f_{LR}\|_{L^1_{\gamma,|v_1|}}
\|M_w\|_{L^1_{\gamma,|v_1|}}+C_{\ell,u}\left(\frac{\ln \tau+1}{\tau}\right).
\end{align*}
\end{lemma}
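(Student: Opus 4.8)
The plan is to compute $\int_{\mathbb{R}^3}f^{n+1}v_i\,dv$ for each fixed $x$ by integrating the mild-solution representations (\ref{f+})--(\ref{f-}) against $v_i$ over the two half-spaces $\{v_1>0\}$ (where the trace at $x=0$ enters) and $\{v_1<0\}$ (where the trace at $x=1$ enters), and in each piece to separate the \emph{transport term} carrying the incoming trace from the \emph{gain term} carrying $\mathcal{M}_{\nu}(f^n)$. The decisive difference between the two parts lies in the gain term: for $i=1$ the factor $v_1$ cancels the kinematic singularity $1/(\tau|v_1|)$, so after bounding $\mathcal{M}_{\nu}(f^n)\leq Ce^{-C|v|^2}$ via Lemma \ref{Max decomposition} the gain term is $O(\tau^{-1})$ \emph{without} a logarithm, whereas for $i=2,3$ no such cancellation occurs and Lemma \ref{main estimate} produces the factor $(\ln\tau+1)/\tau$ recorded in the statement.

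For part $(1)$ I first replace the attenuation factors $e^{-x/(\tau|v_1|)}$ and $e^{-(1-x)/(\tau|v_1|)}$ in the two transport terms by $1$, the error being controlled by $1-e^{-s}\leq s$ together with the inward trace bound in $(\mathcal{D}_i)$, hence of size $O(\tau^{-1}C_{LR,1})$. This reduces the transport contribution to the two incoming fluxes $\int_{v_1>0}f^{n+1}(0,v)|v_1|\,dv$ and $\int_{v_1<0}f^{n+1}(1,v)|v_1|\,dv$, which I have already evaluated in (\ref{f111})--(\ref{f112}): each equals $\delta_1$ times the corresponding inflow flux plus $(\delta_2+\delta_3)$ times an outgoing flux of $f^n$. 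Because $v_1=+|v_1|$ on $\{v_1>0\}$ while $v_1=-|v_1|$ on $\{v_1<0\}$, these two incoming fluxes enter with opposite signs, so the $\delta_1$ pieces assemble into the flux discrepancy $\delta_1\bigl(\int_{v_1>0}f_L|v_1|\,dv-\int_{v_1<0}f_R|v_1|\,dv\bigr)$; taking absolute values and bounding the remaining $(\delta_2+\delta_3)$ combination by $\|f^n\|_{L^1_{\gamma,|v_1|,+}}\leq 2\|f_{LR}\|_{L^1_{\gamma,|v_1|}}\leq 2C_{LR,1}$ through $(\mathcal{D}_i)$ yields the stated inequality.

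For part $(2)$ I exploit the symmetries of the boundary data. By $(P_2)$ the inflow term $\delta_1\int e^{-x/(\tau|v_1|)}f_{L}v_i\,dv$ (and its $f_R$ analogue at $x=1$) vanishes, and since $M_w$ is even in $v_i$ while the attenuation factor depends on $v_1$ alone, the diffusive term $\delta_2(\cdots)\int e^{-x/(\tau|v_1|)}M_w v_i\,dv$ vanishes as well. Only the specular term survives; after the change of variables $v\mapsto Rv$, which fixes $|v_1|$ and fixes $v_i$ for $i=2,3$, it becomes $\delta_3$ times an outgoing transverse flux of $f^n$, which is bounded through $(\mathcal{D}_i)$ by a multiple of $\delta_3$ times the boundary-data norm. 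Adding the gain term, estimated by Lemma \ref{main estimate} as explained above, gives the claimed bound.

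The main obstacle is the bookkeeping in part $(1)$: one must verify that the transport terms really do collapse to the two incoming fluxes with the correct opposite signs---this is exactly what converts a \emph{sum} of fluxes into their \emph{difference} and so produces $\delta_1\bigl|\int f_L|v_1|-\int f_R|v_1|\bigr|$ rather than the weaker $\delta_1\bigl(\int f_L|v_1|+\int f_R|v_1|\bigr)$---and that every discarded contribution (the two exponential-replacement errors and the logarithm-free gain terms) is genuinely $O(\tau^{-1}C_{LR,1})$, matching the $\tfrac{2}{\tau}C_{LR,1}$ allowance in the statement. This tight control of $U_1$ by the flux discrepancy is precisely the estimate (\ref{U1}) on which the critical-case lower bound for $\mathcal{T}_{-1/2}$ rests, so the signs cannot be treated loosely. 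In part $(2)$ the only delicate point is reading $(P_2)$ as a condition that holds for each fixed $v_1$, so that it survives multiplication by the $v_1$-dependent weight $e^{-x/(\tau|v_1|)}$.
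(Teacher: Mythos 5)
Your proof is correct and follows essentially the same route as the paper: part (1) reduces the moment to the difference of the two incoming boundary fluxes (whence the flux discrepancy, exactly as in (\ref{f111})--(\ref{f112})) plus an $O(\tau^{-1})$ remainder, and part (2) kills the inflow and diffusive boundary contributions by $(P_2)$ and the evenness of $M_w$ in $v_i$, leaving the specular term and the $(\ln\tau+1)/\tau$ gain term from Lemma \ref{main estimate}. The only organizational difference is in the remainder of part (1): the paper integrates the equation directly, writing $f^{n+1}(x,v)=f^{n+1}(0,v)+\frac{1}{\tau v_1}\int_0^x(\mathcal{M}_{\nu}(f^n)-f^{n+1})\,dy$, so that after multiplying by $v_1$ the remainder is the single term $\frac{1}{\tau}\int_0^1\int_{\mathbb{R}^3}\{\mathcal{M}_{\nu}(f^n)+f^{n+1}\}\,dv\,dy\leq\frac{4}{\tau}C_{LR,1}$, whereas you keep the Duhamel form and split the remainder into an exponential-replacement error plus a gain term. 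Your splitting is valid, but to recover the stated constant you should sum the $v_1>0$ and $v_1<0$ contributions before invoking the density and trace bounds (so each of your two error sources contributes $\frac{2}{\tau}C_{LR,1}$, via $\int_{\mathbb{R}^3}\mathcal{M}_{\nu}(f^n)\,dv=\rho^{n}\leq 2C_{LR,1}$ rather than the Gaussian envelope of Lemma \ref{Max decomposition}); bounding the four half-space pieces separately by $\frac{2}{\tau}C_{LR,1}$ each gives $\frac{8}{\tau}C_{LR,1}$, which is harmless for the application but is not the inequality as written.
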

\begin{proof}
\noindent$(1)$
For $v_1>0$, (\ref{f+}) has the equivalent mild form:
\begin{align*}
f^{n+1}(x,v)&=f^{n+1}(0,v)+\frac{1}{\tau|v_1|}\int^x_0\left(\mathcal{M}_{\nu}(f^n)-f^{n+1}\right)dy\cr
&=f^{n+1}(0,v)+\frac{1}{\tau v_1}\int^x_0\left(\mathcal{M}_{\nu}(f^n)-f^{n+1}\right)dy.
\end{align*}
We multiply $v_1$ and integrate on $v_1>0$ to get
\begin{align}\label{(a)}
\int_{v_1>0}f^{n+1}(x,v) v_1dv&=\int_{v_1>0}f^{n+1}(0,v)|v_1|dv+\frac{1}{\tau}\int^x_0\int_{v_1>0}\left(\mathcal{M}_{\nu}(f^n)-f^{n+1}\right)dydv. 
\end{align}
In the case $v_1<0$, we have from (\ref{f-}) that
\begin{align*}
f^{n+1}(x,v)&=f^{n+1}(1,v)+\frac{1}{\tau|v_1|}\int^x_1\left(\mathcal{M}_{\nu}(f^n)-f^{n+1}\right)dy\cr
&=f^{n+1}(1,v)-\frac{1}{\tau v_1}\int^x_1\left(\mathcal{M}_{\nu}(f^n)-f^{n+1}\right)dy.
\end{align*}
Integrating with respect to $v_1dv$ on $v_1<0$:
\begin{align}\label{(b)}
\int_{v_1<0}f^{n+1}(x,v) v_1dv
&=\int_{v_1<0}f^{n+1}(1,v)v_1dv-\frac{1}{\tau}\int^x_1\int_{v_1<0}\left(\mathcal{M}_{\nu}(f^n)-f^{n+1}\right)dydv\cr
&=-\int_{v_1<0}f^{n+1}(1,v)|v_1|dv+\frac{1}{\tau}\int^1_x\int_{v_1<0}\left(\mathcal{M}_{\nu}(f^n)-f^{n+1}\right)dydv.
\end{align}
From (\ref{(a)}) and (\ref{(b)}), we have
\begin{align*}
\int_{\mathbb{R}^3}f^{n+1}(x,v) v_1dv&=I+II,\cr
\end{align*}
where
\begin{align*}
I
&=\int_{v_1>0}f^{n+1}(0,v)|v_1|dv-\int_{v_1<0}f^{n+1}(1,v)|v_1|dv
\end{align*}
and
\begin{align*}
II&=\frac{1}{\tau}\int^x_0\int_{v_1>0}\left(\mathcal{M}_{\nu}(f^n)-f^{n+1}\right)dvdy
+\frac{1}{\tau}\int^1_x\int_{v_1<0}\left(\mathcal{M}_{\nu}(f^n)-f^{n+1}\right)dydv.
\end{align*}
$(a)$ The estimate of $I$: 
We observe from the boundary condition that
\begin{align}\label{I1}
	\begin{split}
&\int_{v_1>0}f^{n+1}(0,v)|v_1|dv\cr
&\qquad=\delta_1\int_{v_1>0}f_L|v_1|dv
+\delta_2\left(\int_{v_1<0}f^n(0,v)|v_1|dv\right)\left(\int_{v_1>0}M_w|v_1|dv\right)\cr
&\qquad+\delta_3\int_{v_1>0}f^n(0,Rv)|v_1|dv\cr
&\qquad=\delta_1\int_{v_1>0}f_L|v_1|dv
+\delta_2\int_{v_1<0}f^n(0,v)|v_1|dv
+\delta_3\int_{v_1<0}f^n(0,v)|v_1|dv\cr
\end{split}
\end{align}
and 
\begin{align}\label{I2}
	\begin{split}
&\int_{v_1<0}f^{n+1}(1,v)|v_1|dv\cr
&\qquad=\delta_1\int_{v_1<0}f_R|v_1|dv
+\delta_2\int_{v_1>0}f^n(1,v)|v_1|dv+\delta_3\int_{v_1>0}f^n(1,v)|v_1|dv.
\end{split}
\end{align}
Taking the difference of (\ref{I1}) and (\ref{I2}), we estimate $I$ as
\begin{align*}
|I|&=\left|\int_{v_1>0}f^{n+1}(0,v)|v_1|dv-\int_{v_1<0}f^{n+1}(1,v)|v_1|dv\right|\cr
&\leq\delta_1\left|\int_{v_1>0}f_L|v_1|dv-\int_{v_1<0}f_R|v_1|dv\right|
+\delta_2\|f^n\|_{L^1_{\gamma,\langle v\rangle}}\|f_{LR}\|_{L^1_{\gamma,\langle v\rangle}}+\delta_3\|f^n\|_{L^1_{\gamma,\langle v\rangle}}\cr
&\leq\delta_1\left|\int_{v_1>0}f_L|v_1|dv-\int_{v_1<0}f_R|v_1|dv\right|
+2(\delta_2+\delta_3)\|f_{LR}\|_{L^1_{\gamma,\langle v\rangle}}\|M_w\|_{L^1_{\gamma,\langle v\rangle}}.
\end{align*}
In the last line, we used Lemma \ref{auu}.\newline

\noindent$(b)$ The estimate for $II$: For $II$, we compute
\begin{align*}
|II|&\leq \frac{1}{\tau}\int^x_0\int_{v_1>0}\left|\mathcal{M}_{\nu}(f^n)-f^{n+1}\right|dvdy
+\frac{1}{\tau}\int^1_x\int_{v_1<0}\left|\mathcal{M}_{\nu}(f^n)-f^{n+1}\right|dydv\cr
&\leq\frac{1}{\tau}\int^1_0\int_{v_1>0}\left\{\mathcal{M}_{\nu}(f^n)+f^{n+1}\right\}dvdy
+\frac{1}{\tau}\int^1_0\int_{v_1<0}\left\{\mathcal{M}_{\nu}(f^n)+f^{n+1}\right\}dydv\cr
&=\frac{1}{\tau}\int^1_0\int_{\mathbb{R}^3}\{\mathcal{M}_{\nu}(f^n)+f^{n+1}\}dvdy\cr
&=\frac{1}{\tau}\int^1_0\left\{\rho^{n}+\rho^{n+1}\right\}dy\cr
&\leq \frac{2}{\tau}\|f_{LR}\|_{L^1_{\gamma,\langle v\rangle}}\|M_w\|_{L^1_{\gamma,\langle v\rangle}}.
\end{align*}
In the last line, we used $(\mathcal{B}_i)$ $(i=1,2):$
\begin{align*}
\rho^n\leq 2\|f_{LR}\|_{L^1_{\gamma,\langle v\rangle}}\|M_w\|_{L^1_{\gamma,\langle v\rangle}}
\end{align*}
and Lemma \ref{au}:
\begin{align*}
\rho^{n+1}&\leq 2\|f_{LR}\|_{L^1_{\gamma,\langle v\rangle}}\|M_w\|_{L^1_{\gamma,\langle v\rangle}}.
\end{align*}
Now, we combine $(a)$ and $(b)$ to obtain the desired result.\newline\newline
\noindent (2) We only prove the case $i$=$2$.
For  $v_1>0$, we integrate (\ref{f+}) with respect to $v_2dv_2dv_3$ to get
\begin{align*}
\int_{\mathbb{R}^2}f^{n+1}(x,v)v_2dv_2dv_3&=e^{-\frac{x}{\tau |v_1|}}
\int_{\mathbb{R}^2}f^{n+1}(0,v)v_2dv_2dv_3\cr
&+\frac{1}{\tau|v_1|}\int^x_0e^{-\frac{x-y}{\tau |v_1|}}\int_{\mathbb{R}^2}\mathcal{M}_{\nu}(f^n)v_2dv_2dv_3dy
\end{align*}

From $(P_2)$ of $f_L$ that no vertical flow is induced from $f_{LR}$, and the fact that $M_wv_2$ is odd in $v_2$, we have
\begin{align*}
\int_{\mathbb{R}^2}f^{n+1}(0,v)v_2dv_2dv_3=\delta_3\int_{v_1<0}f^n(0,v)dv
\end{align*}
which, together with Property $(\mathcal{B}_1)$ or $(\mathcal{B}_2)$ and Lemma \ref{Max decomposition} gives
\begin{align*}
	\int_{\mathbb{R}^2}f^{n+1}(x,v)v_2dv_2dv_3&\leq \delta_3\int_{\mathbb{R}^2}f^{n}(0,v)v_2dv_2dv_3\cr
	&+C_{\ell,u}\frac{1}{\tau|v_1|}\int_{\mathbb{R}^6}\int^x_0e^{-\frac{x-y}{\tau |v_1|}}e^{-C|v|^2}dydv_2dv_3.
\end{align*}
Now, integrating on $v_1>0$ and recalling Lemma \ref{main estimate}, we get
\begin{align*}
\left|\int_{v_1>0}f^{n+1}(x,v)v_2dv\right|
\leq \delta_3\left|\int_{v_1<0}f^n(0,v)v_2dv\right|+C_{\ell,u}\left(\frac{\ln \tau+1}{\tau}\right).
\end{align*}
Applying the same type of argument to (\ref{f-}), we can derive
\begin{align*}
\Big|\int_{v_1<0}f^{n+1}(x,v)v_2dv\Big|
\leq \delta_3\left|\int_{v_1>0}f^n(1,v)v_2dv\right|+C_{\ell,u}\left(\frac{\ln \tau+1}{\tau}\right).
\end{align*}
We sum these up them to obtain
\begin{align*}
\Big|\int_{v_1<0}f^{n+1}(x,v)v_2dv\Big|
&\leq 2\delta_3\|f_{LR}\|_{L^1_{\gamma,\langle v_1\rangle }}\|M_w\|_{L^1_{\gamma,\langle v_1\rangle}}+C_{\ell,u}\left(\frac{\ln \tau+1}{\tau}\right).
\end{align*}
Here, we used Lemma \ref{auu} (2).
\end{proof}
In the following lemma, we show that the quadratic polynomial of the temperature tensor can be controlled from below and above. We mention that the critical case ($\nu=-1/2$) has never been
treated in the literature so far, except for the near-global-equilibrium regime \cite{Yun22,Yun3}.
\begin{lemma}\label{gl} $(1)$ Let $-1/2<\nu<1$. Assume $f^n\in\Omega_1$. Then, for sufficiently large $\tau$, we have
\begin{align*}
C^1_{\nu}\delta_1^2\frac{\gamma_{\ell,1}}{3C_{LR,1}^2}\leq\kappa^{\top}\left\{\mathcal{T}^{n+1}_{\nu}\right\}\kappa\leq \frac{2 }{{3a_{\ell,1}}\delta_1}C^2_{\nu}C_{LR,1}
\end{align*}
for any $\kappa\in\mathbb{R}$ and $|\kappa|=1$. \newline
\noindent$(2)$ Let $\nu=-1/2$ and $f\in \Omega_2$. Then, for sufficiently large $\tau$, we have
\begin{align*}
\delta_1\frac{a_{-1/2,1}}{2C_{LR,1}}\leq\kappa^{\top}\left\{\mathcal{T}^{n+1}_{-1/2}\right\}\kappa\leq \frac{3}{2a_{\ell,1}\delta_1}C_{LR,1}
\end{align*}
for any $\kappa\in\mathbb{R}$ and $|\kappa|=1$. 
\end{lemma}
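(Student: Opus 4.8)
The plan is to convert the matrix inequality into scalar moment estimates and to treat the two cases by genuinely different mechanisms. Writing $|\kappa|=1$ and using $\mathcal{T}_\nu=(1-\nu)T\mathbb{I}_3+\nu\Theta$ together with the definitions in (\ref{macroscopic field}), one has
\[
\kappa^{\top}\{\mathcal{T}^{n+1}_{\nu}\}\kappa=\frac{1-\nu}{3\rho^{n+1}}\int_{\mathbb{R}^3}f^{n+1}|v-U^{n+1}|^2\,dv+\frac{\nu}{\rho^{n+1}}\int_{\mathbb{R}^3}f^{n+1}\{(v-U^{n+1})\cdot\kappa\}^2\,dv .
\]
The ingredients I would carry over from the preceding lemmas are the pointwise lower bound $f^{n+1}\geq\delta_1 e^{-1/|v_1|}f_{LR}$ coming from the transport part in Lemma \ref{al} (valid once $\tau\geq1$), and the moment bounds $\rho^{n+1}\geq\delta_1 a_{\ell,1}$ and $\int_{\mathbb{R}^3}f^{n+1}|v|^2dv\leq 2C_{LR,1}$ from Lemmas \ref{al} and \ref{au}. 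Both upper bounds are then immediate: discarding the nonnegative term $\rho|U|^2$ gives $\int f|v-U|^2dv\leq\int f|v|^2dv$, so in the non-critical case the equivalence relation (\ref{ER}) yields $\kappa^{\top}\{\mathcal{T}^{n+1}_{\nu}\}\kappa\leq\max\{1-\nu,1+2\nu\}\,T^{n+1}$, and dividing the energy bound by $3\rho^{n+1}\geq3\delta_1 a_{\ell,1}$ produces the stated upper bound with $C^2_\nu=\max\{1-\nu,1+2\nu\}$; the critical upper bound is obtained the same way from the directional identity below.

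For the \emph{lower} bound in the non-critical range $-1/2<\nu<1$, the equivalence relation (\ref{ER}) reduces everything to bounding $T^{n+1}$ from below, i.e.\ to bounding $\int_{\mathbb{R}^3}f^{n+1}(v_1-U_1^{n+1})^2dv$ from below. Here I would use the symmetric double-integral form of the variance,
\[
\int_{\mathbb{R}^3}f^{n+1}(v_1-U_1^{n+1})^2dv=\frac{1}{2\rho^{n+1}}\iint (v_1-v_1')^2 f^{n+1}(v)f^{n+1}(v')\,dv\,dv',
\]
restrict the domain to $v_1>0,\ v_1'<0$ (where $f_{LR}$ reduces to $f_L$, resp.\ $f_R$), apply $(v_1-v_1')^2\geq 4|v_1||v_1'|$ and the pointwise lower bound, and factor the outcome as $4\delta_1^2\gamma_{\ell,1}/\rho^{n+1}$ with $\gamma_{\ell,1}$ as in (\ref{gamma}). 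Combined with $\rho^{n+1}\leq 2C_{LR,1}$ this gives $T^{n+1}\geq\delta_1^2\gamma_{\ell,1}/(3C_{LR,1}^2)$, and multiplying by $C^1_\nu=\min\{1-\nu,1+2\nu\}$ produces the claimed bound. The product appearing in $\gamma_{\ell,1}$ is exactly what the double-integral identity manufactures, its two factors recording the inflow from the two ends of the slab.

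The critical case $\nu=-1/2$ is the main obstacle, because the first inequality in (\ref{ER}) degenerates to $0$ and no isotropic reduction is available. Instead I would use the directional-temperature identity (which follows from the definition of $\mathcal{T}_{-1/2}$, cf.\ the introduction)
\[
\kappa^{\top}\{\mathcal{T}^{n+1}_{-1/2}\}\kappa=\frac{1}{2\rho^{n+1}}\int_{\mathbb{R}^3}f^{n+1}\big\{|v-U^{n+1}|^2-((v-U^{n+1})\cdot\kappa)^2\big\}dv ,
\]
whose integrand is the squared length of the projection of $v-U^{n+1}$ onto $\kappa^{\perp}$ and is hence nonnegative; the upper bound is read off at once by discarding the projected term. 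For the lower bound, applying the pointwise bound reduces the problem to estimating $\int_{\mathbb{R}^3}e^{-1/|v_1|}f_{LR}\{|v-U^{n+1}|^2-((v-U^{n+1})\cdot\kappa)^2\}dv$ from below, \emph{uniformly in} $\kappa$. Expanding the square in the shift $U^{n+1}$, the zeroth-order term is bounded below by $a_{-1/2,1}$ by its very definition (\ref{a1}), the quadratic term is nonnegative, and the only dangerous contribution is the cross term, which is linear in $U^{n+1}$.

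The crux is therefore to show this cross term is small uniformly in $\kappa$, and this is precisely where the drift estimates enter. By $(P_2)$ the $v_2,v_3$ components of $\int e^{-1/|v_1|}f_{LR}v\,dv$ vanish, so the cross term is governed by $U_2^{n+1},U_3^{n+1}$, which are $O(\delta_3,\tau^{-1})$ by Lemma \ref{23}(2), and by $U_1^{n+1}$ paired with the weighted flux $\int e^{-1/|v_1|}f_{LR}v_1\,dv$, both of which are controlled by the discrepancy of the boundary flux through (\ref{U1}). Hence, once $\delta_2,\delta_3,\tau^{-1}$ and the flux discrepancy are small enough, the cross term is dominated by half of $a_{-1/2,1}$ uniformly in $\kappa$, leaving the weighted perpendicular energy $\gtrsim\delta_1 a_{-1/2,1}$; dividing by $2\rho^{n+1}\leq 4C_{LR,1}$ gives a lower bound of the claimed form. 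The genuine difficulty, flagged in the introduction, is exactly this uniform-in-$\kappa$ control of the drift $U^{n+1}$: unlike the non-critical case one cannot collapse the anisotropic form to the scalar $T$, so one must retain the direction $\kappa$ and guarantee that the anisotropic correction never cancels the perpendicular energy.
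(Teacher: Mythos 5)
Your proposal is correct, and it is worth recording where it departs from the paper. The upper bounds and the whole critical case follow the paper's strategy: the directional-temperature identity for $\mathcal{T}_{-1/2}$, the pointwise bound $f^{n+1}\geq\delta_1e^{-1/|v_1|}f_{LR}$ producing $a_{-1/2,1}$, and smallness of the drift via Lemma \ref{23} and the flux discrepancy. The one organizational difference there is that the paper expands the shift $U^{n+1}$ \emph{before} applying the pointwise bound, so that $\int f^{n+1}(v-U^{n+1})dv=0$ kills the cross term exactly and leaves only the quadratic remainder $|U^{n+1}|^2-(U^{n+1}\cdot\kappa)^2$, whereas you apply the pointwise bound first and must then control a cross term linear in $U^{n+1}$ against the first moment of $e^{-1/|v_1|}f_{LR}$; both routes close under the same smallness hypotheses, the paper's being marginally cleaner. (Your version of the identity carries the correct factor $\tfrac12$, which the paper drops; this only shifts harmless constants.) Where you genuinely diverge is the non-critical lower bound: the paper bounds $T^{n+1}$ via Cauchy--Schwarz, writes $(\int f|v_1|dv)^2-(\int fv_1dv)^2$ as a difference of squares, and must absorb a remainder $R$ built from the transverse momenta using Lemma \ref{23}(2) and $(P_2)$ together with large $\tau$; your symmetric double-integral representation of $\int f^{n+1}(v_1-U_1^{n+1})^2dv$, restricted to $v_1>0>v_1'$ with $(v_1-v_1')^2\geq4|v_1||v_1'|$, manufactures the same product $4\delta_1^2\gamma_{\ell,1}$ with no error term at all, so part (1) needs neither the transverse-drift estimates nor the absorption step, and your constant $\delta_1^2\gamma_{\ell,1}/(3C_{LR,1}^2)$ comes out exactly. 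The only soft spot is the phrasing suggesting the weighted flux $\int e^{-1/|v_1|}f_{LR}v_1dv$ is itself controlled by the flux discrepancy --- it is merely bounded, and it is $U_1^{n+1}$ that is small --- but since the cross term is their product this does not affect the argument.
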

\begin{remark}
	We recall that where $\gamma_{\ell,1}$ and $a_{-1/2,1}$ are defined in (\ref{gamma}) and (\ref{a1}) respectively.
\end{remark}
\begin{proof}
(1) We recall the following equivalence estimate \cite{Bang Y,Yun3} which holds for $-1/2<\nu<1$:
\begin{eqnarray}\label{equiv T}
C^1_{\nu}T^{n+1}\mathbb{I}_3\leq\mathcal{T}^{n+1}_{\nu}\leq C^2_{\nu}T^{n+1}\mathbb{I}_3,
\end{eqnarray}
where
$C^1_{\nu}=\min\{1-\nu,1+2\nu\}$ and $C^2_{\nu}=\max\{1-\nu,1+2\nu\}$.
Therefore, it is enough to derive the lower and upper bound of $T^{n+1}$. 
The upper bound follows easily from Lemma \ref{al} and Lemma \ref{au}:
\begin{align}\label{T1}
\begin{split}
T^{n+1}
\leq \frac{1}{3\rho^{n+1}}\int_{\mathbb{R}^3}f^{n+1}|v|^2dv
\leq \frac{2}{3\delta_1a_{\ell,1}}\|f_{LR}\|_{L^1_{\gamma,\langle v\rangle}}\|M_w\|_{L^1_{\gamma,\langle v\rangle}}.
\end{split}
\end{align}
Now we turn to the lower bound of $T^{n+1}$.
Since $f^{n+1}\geq0$ and $|v|\geq |v_1|$, we have from the Cauchy-Schwarz inequality that
\begin{align*}
3\{\rho^{n+1}\}^2T^{n+1}&=\left(\int_{\mathbb{R}^3}f^{n+1}dv\right)\left(\int_{\mathbb{R}^3}f^{n+1}|v|^2dv\right)
-\left|\int_{\mathbb{R}^3}f^{n+1}vdv\right|^2\cr
&\geq\left(\int_{\mathbb{R}^3}f^{n+1}|v_1|dv\right)^2-\left|\int_{\mathbb{R}^3}f^{n+1}vdv\right|^2.
\end{align*}
Then we decompose according to whether they contain vertical flow or not:
\begin{align}\label{I}
	\begin{split}
3\{\rho^{n+1}\}^2T^{n+1}
& \geq\left(\int_{\mathbb{R}^3}f^{n+1}|v_1|dv\right)^2-\Big\{\sum_{1\leq i\leq 3}\Big|\int_{\mathbb{R}^3}f^{n+1}v_idv\Big|\Big\}^2\cr
&=\left(\int_{\mathbb{R}^3}f^{n+1}|v_1|dv\right)^2
-\Big(\int_{\mathbb{R}^3}f^{n+1}v_1dv\Big)^2-R\cr
&\equiv I-R,
\end{split}
\end{align}
where
\begin{align*}
R=\sum_{(i,j)\neq (1,1)}\Big|\int_{\mathbb{R}^3}f^{n+1}v_idv\Big|\Big|\int_{\mathbb{R}^3}f^{n+1}v_jdv\Big|.
\end{align*}
Since $f^n\in\Omega_1$,  we see from Lemma \ref{23} that $R$ can be taken to be arbitrarily small
by taking $\tau$ sufficiently large:
\[
R=O\left(\delta_3,(\ln\tau+1)\tau^{-1}\right).
\]
For  $I$, we use $a^2-b^2=(a-b)(a+b)$ to compute
\begin{align*}
I&\geq\left\{\int_{\mathbb{R}^3}f^{n+1}(|v_1|+v_1)dv\right\}\left\{\int_{\mathbb{R}^3}f^{n+1}(|v_1|-v_1)dv\right\}\cr
&=4\left\{\int_{v_1>0}f^{n+1}|v_1|dv\right\}\left\{\int_{v_1<0}f^{n+1}|v_1|dv\right\}\cr
&\geq4\delta_1^2\left(\int_{v_1>0}e^{-\frac{1}{|v_1|}}f_L|v_1|dv\right)
\left(\int_{v_1<0}e^{-\frac{1}{|v_1|}}f_R|v_1|dv\right)\cr
&=4\delta^2_1\gamma_{\ell,1}.
\end{align*}
In the last line, we used  (\ref{f+}) as
\begin{align*}
f^{n+1}&\geq \delta_1e^{-\frac{x}{\tau|v_1|}}f_L1_{v_1>0}+\delta_1e^{-\frac{1-x}{\tau|v_1|}}f_R1_{v_1<0}\cr
&\geq \delta_1e^{-\frac{1}{|v_1|}}f_L1_{v_1>0}+\delta_1e^{-\frac{1}{|v_1|}}f_R1_{v_1<0}
\end{align*}
and $\tau>1$.
Therefore, for sufficiently large $\tau$, we  get  
\begin{align}\label{T2}
T^{n+1}\geq \frac{1}{3\{\rho^{n+1}\}^2}\left\{4\delta_1^2\gamma_{\ell,1}-C
\left(\frac{\ln \tau+1}{\tau}\right)\right\}\geq \delta^2_1\frac{\gamma_{\ell,1}}{3C_{LR,1}^2}.
\end{align}
Thanks to Lemma \ref{au}.
Finally, we put (\ref{T1}) and (\ref{T2}) into (\ref{equiv T}) to get the desired result.\newline\newline
%
%
\noindent(2) In this critical case, the l.h.s of the equivalence type estimate (\ref{equiv T}) become trivial, and does not give any meaningful information about the positivity of the temperature tensor. Therefore, we need to take a more careful look in the structure of the temperature tensor directly. For this, we observe that the quadratic polynomial of the temperature tensor can be written in terms of the local  energy and the directional local energy in the critical case $\nu=-1/2$:
\begin{align}\label{tbt}
\begin{split}
\left[\kappa^{\top}\left\{\mathcal{T}^{n+1}_{-1/2}\right\}\kappa\right]
&=\frac{1}{\rho^{n+1}}\int_{\mathbb{R}^3}f^{n+1}\left\{|v|^2-(v\cdot\kappa)^2\right\}dv\cr
&-\frac{1}{\rho^{n+1}}\left\{\rho^{n+1}
|U^{n+1}|^2-\rho^{n+1} (U^{n+1}\cdot \kappa)^2\right\}\cr
&\equiv I+II,
\end{split}
\end{align}
for $|\kappa|=1$\newline

\noindent$(i)$ Upper bound of $I+II$: Since, 
$$
|v|^2-(v\cdot\kappa)^2\leq |v|^2\quad\mbox{ and }\quad \rho^{n+1}|U^{n+1}|^2-\rho^{n+1} (U^{n+1}\cdot \kappa)^2 \geq 0,
$$
We can ignore $II$ and employ Lemma \ref{al} and Lemma \ref{au} for $I$ to get 
\begin{align*}
\kappa^{\top}\left\{\mathcal{T}^{n+1}_{-1/2}\right\}\kappa\leq \frac{1}{\rho^{n+1}}\int_{\mathbb{R}^3}f^{n+1}|v|^2dv\leq\frac{1}{a_{\ell,1}\delta_1}
\|f_{LR}\|_{L^1_{\gamma,\langle v\rangle}}\|M_w\|_{L^1_{\gamma,\langle v\rangle}}.\cr
\end{align*}
\noindent$(ii)$ Lower bound of $I+II$: For this, we derive the lower bound for the quadratic polynomial of $\mathcal{T}_{-1/2}$ by combining the lower bound of $I$ and smallness of $II$:\newline

\noindent$(ii$-$a)$ Lower bound of $I$: Since $|v|^2-(v\cdot \kappa)^2\geq0$, we observe from (\ref{f+}), (\ref{f-}) and
Lemma \ref{au} that
\begin{align*}
\rho^{n+1}I
&= \int_{\mathbb{R}^3}f^{n+1}\left\{|v|^2-(v\cdot\kappa)^2dv\right\}\cr
&\geq \delta_1\int_{\mathbb{R}^3}\left\{e^{-\frac{x}{\tau|v_1|}}f_L1_{v_1>0}
+e^{-\frac{1-x}{\tau|v_1|}}f_R1_{v_1<0}\right\}\left\{|v|^2-(v\cdot\kappa)^2\right\}dv\cr
&\geq\delta_1\inf_{|\kappa|=1}\int_{\mathbb{R}^3}e^{-\frac{1}{\tau|v_1|}}f_{LR}\left\{|v|^2-(v\cdot\kappa)^2\right\}dv.\cr
\end{align*}
Since we are assuming $\tau$ is sufficiently large, we assume without loss of generality that $\tau>1$, so that

\begin{align*}
\rho^{n+1}I&\geq\delta_1\inf_{|\kappa|=1}\int_{\mathbb{R}^3}e^{-\frac{1}{|v_1|}}f_{LR}\left\{|v|^2-(v\cdot\kappa)^2\right\}dv= \delta_1a_{-1/2}.\cr
\end{align*}

\noindent$(ii$-$b)$ Smallness of $II$: Thanks to Lemma \ref{23}, we see that $II$ can be 
controlled, up to small error, by the discrepancy of the boundary flux:
\begin{align*}
II&\leq \frac{|\rho^{n+1} U^{n+1}|^2}{\rho^{n+1}}
\leq\frac{1}{a_{\ell,1}}\left|\int_{\mathbb{R}^3}f^{n+1}vdv\right|^2
\leq \frac{1}{a_{\ell,1}}\sum_{i=1}^3\left|\int_{\mathbb{R}^3}f^{n+1}v_idv\right|^2\cr
&\leq2\left|\int_{v_1>0}f_L|v_1|dv-\int_{v_1<0}f_R|v_1|dv\right|^2\cr
&+16\left(\delta_2+\delta_3+\frac{1}{\tau}\right)^2C^2_{LM}+C_{\ell,u}\left(\frac{\ln \tau+1}{\tau}\right)^2.
\end{align*}

From the estimates in $(ii$-$a)$, $(ii$-$b)$,  we have
\begin{align*}
	\begin{split}
		\kappa^{\top}\left\{\mathcal{T}^{n+1}_{-1/2}\right\}\kappa
		&\geq \frac{1}{\rho^{n+1}}\{I-|II|\}\cr 
		&\geq\frac{1}{a_{\ell,1}}
		\left\{a_{-1/2,1}-2\left|\int_{v_1>0}f_L|v_1|dv-\int_{v_1<0}f_R|v_1|dv\right|^2
		-O\left(\delta_2,\delta_3,\tau^{-1}\right)\right\}.
	\end{split}
\end{align*}
Therefore, if  $\delta_2$, $\delta_3$, $\tau^{-1}$ and the flux discrepency:
\[
\left|\int_{v_1>0}f_L|v_1|dv-\int_{v_1<0}f_R|v_1|dv\right|
\]  
are sufficiently small, we get the desired lower bound.
\end{proof}
\section{Cauchy estimate for $f^n$}

The goal of this section is to show that $\{f^n\}$ forms a Cauchy sequence in an appropriate norm. First, we consider the continuity property of the ellipsoidal Gaussian:
\begin{proposition}\label{Lipshitz} Let $f$, $g$ be elements of $\Omega_1$ $(-1/2<\nu<1)$ or  $\Omega_2$ $(\nu=-1/2)$. Then the non-isotropic Gaussian $\mathcal{M}_{\nu}$ satisfies 
\begin{eqnarray*}
|\mathcal{M}_{\nu}(f)-\mathcal{M}_{\nu}(g)|\leq C\sup_x\|f-g\|_{L^{1}_2}
e^{-C|v|^2}.
\end{eqnarray*}
\end{proposition}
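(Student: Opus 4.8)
The plan is to regard $\mathcal{M}_\nu(f)$ not as a nonlinear functional of $f$ directly, but as an \emph{explicit smooth function of its macroscopic parameters}, and then to estimate the difference by the fundamental theorem of calculus in parameter space. Write $\mathcal{M}_\nu(f)=G\big(\rho_f,U_f,\mathcal{T}_{\nu,f};v\big)$ with
\[
G(\rho,U,\mathcal{T};v)=\frac{\rho}{\sqrt{\det(2\pi\mathcal{T})}}\exp\!\Big(-\tfrac12(v-U)^{\top}\mathcal{T}^{-1}(v-U)\Big),
\]
and similarly for $g$. The structural fact we exploit, coming from $f,g\in\Omega_i$, is that both parameter triples lie in the set
\[
K=\Big\{(\rho,U,\mathcal{T}): a_{\ell,1}\le\rho\le 2C_{LR,1},\ |U|\le C_{LR,1}/a_{\ell,1},\ c_1\mathbb{I}_3\preceq\mathcal{T}\preceq c_2\mathbb{I}_3\Big\},
\]
where $c_1,c_2>0$ are the lower and upper bounds on $\kappa^{\top}\mathcal{T}_\nu\kappa$ furnished by $(\mathcal{C}_1)$ or $(\mathcal{C}_2)$, the density bound comes from $(\mathcal{B}_i)$, and the bound on $U$ from the estimate $|U|\le C_{LR,1}/a_{\ell,1}$ established in Lemma~\ref{Max decomposition}. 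The crucial observation is that $K$ is \emph{convex}: the constraints on $\rho$ and $|U|$ cut out an interval and a ball, while $c_1\mathbb{I}_3\preceq\mathcal{T}\preceq c_2\mathbb{I}_3$ is an order interval in the symmetric matrices, hence convex. Consequently the straight segment joining $(\rho_f,U_f,\mathcal{T}_{\nu,f})$ to $(\rho_g,U_g,\mathcal{T}_{\nu,g})$ stays in $K$, so the temperature tensor remains uniformly positive definite along the entire path — this is where degeneracy is avoided.

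Writing $P_t$ for the point of this segment at $t\in[0,1]$, for each fixed $v$ the fundamental theorem of calculus gives
\[
\mathcal{M}_\nu(f)-\mathcal{M}_\nu(g)=\int_0^1\nabla_{(\rho,U,\mathcal{T})}G(P_t;v)\cdot\big(P_1-P_0\big)\,dt,
\]
so it suffices to bound, uniformly over $P_t\in K$, the partial derivatives $\partial_\rho G$, $\nabla_U G$, $\nabla_{\mathcal{T}}G$, and separately to control the displacement $|P_1-P_0|$ by $\sup_x\|f-g\|_{L^1_2}$. For the derivatives, each differentiation brings down polynomial factors in $v$ (at most quadratic for $\nabla_U$; and for $\nabla_{\mathcal{T}}$ a combination of $\mathcal{T}^{-1}$ with the quadratic form $(v-U)^{\top}\mathcal{T}^{-1}(\cdot)\mathcal{T}^{-1}(v-U)$, via Jacobi's formula $\partial_{\mathcal{T}}\det\mathcal{T}=\det\mathcal{T}\,\mathcal{T}^{-1}$ and $\partial_{\mathcal{T}}\mathcal{T}^{-1}=-\mathcal{T}^{-1}(\cdot)\mathcal{T}^{-1}$) multiplying the Gaussian $\exp(-\tfrac12(v-U)^{\top}\mathcal{T}^{-1}(v-U))$. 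On $K$ we have $\|\mathcal{T}^{-1}\|\le 1/c_1$, $\det\mathcal{T}\ge c_1^3$ and $|U|$ bounded, so that Gaussian is dominated by $Ce^{-\frac{1}{2c_2}|v-U|^2}\le Ce^{-C|v|^2}$, and the polynomial prefactors are absorbed by slightly shrinking the exponent. This yields $|\nabla_{(\rho,U,\mathcal{T})}G(P_t;v)|\le Ce^{-C|v|^2}$ uniformly in $t$.

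It remains to bound $|P_1-P_0|$. The density difference is immediate, $|\rho_f-\rho_g|\le\int_{\rrr}|f-g|\,dv\le\sup_x\|f-g\|_{L^1_2}$, and likewise $|\rho_fU_f-\rho_gU_g|\le\int_{\rrr}|f-g|\,|v|\,dv\le\sup_x\|f-g\|_{L^1_2}$; the lower bound $\rho\ge a_{\ell,1}$ then turns these into a bound for $|U_f-U_g|$. The second-moment differences $\int_{\rrr}(f-g)\,v\otimes v\,dv$ are controlled the same way by the weight $(1+|v|^2)$ built into $L^1_2$, and combining them with the already-bounded $U$ and the uniform control on $\rho$ gives $|T_f-T_g|+|\Theta_f-\Theta_g|\le C\sup_x\|f-g\|_{L^1_2}$, whence $|\mathcal{T}_{\nu,f}-\mathcal{T}_{\nu,g}|=|(1-\nu)(T_f-T_g)\mathbb{I}_3+\nu(\Theta_f-\Theta_g)|\le C\sup_x\|f-g\|_{L^1_2}$. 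Substituting these together with the derivative bounds into the integral representation yields the claim. \textbf{The main obstacle} I anticipate is the $\nabla_{\mathcal{T}}G$ term: one must differentiate both the normalizing determinant and the matrix-valued exponent and verify that the outputs stay bounded, which is exactly what the uniform positive-definiteness of $\mathcal{T}_\nu$ on the convex set $K$ (guaranteed by $(\mathcal{C}_i)$) secures, and where one must ensure the interpolated $\mathcal{T}_{\nu,t}$ does not degenerate. A secondary nuisance is the $U$-dependence hidden inside $T$ and $\Theta$, which couples the moment estimates and must be unwound using the uniform bounds on $\rho$ and $U$.
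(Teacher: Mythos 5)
Your proposal is correct and is essentially the paper's own argument: the paper likewise writes $\mathcal{M}_{\nu}(f)-\mathcal{M}_{\nu}(g)$ via a Taylor expansion along the straight segment $(\rho_{\theta},U_{\theta},\mathcal{T}_{\theta})=(1-\theta)(\rho_f,U_f,\mathcal{T}_f)+\theta(\rho_g,U_g,\mathcal{T}_g)$, invokes exactly your convexity observation to transfer the $\Omega_i$ bounds to the interpolated fields, bounds $\partial_{\rho}$, $\partial_U$, $\partial_{\mathcal{T}}$ of the Gaussian by $Ce^{-C|v|^2}$ using the lower bounds on $\rho$, $\det\mathcal{T}_{\nu}$ and the quadratic form, and closes with the same moment estimates $|\rho_f-\rho_g|+|U_f-U_g|+|\mathcal{T}_f-\mathcal{T}_g|\leq C\sup_x\|f-g\|_{L^1_2}$. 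The only cosmetic difference is that the paper treats the non-critical case by citation and writes out only $\nu=-1/2$, whereas you present both uniformly.
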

\begin{proof}
	The case of $-1/2<\nu<1$ is covered in \cite{Bang Y}. Here we
        only consider the case $\nu=-1/2$. Throughout this proof,
        $\nu$ is fixed to be $-1/2$. 
	 We apply Taylor expansion to $\mathcal{M}_{\nu}(f)-\mathcal{M}_{\nu}(g)$ as
	\begin{align}\label{turnbackto}
	\begin{split}
	&\mathcal{M}_{\nu}(f)-\mathcal{M}_{\nu}(g)\cr
	&\quad=(\rho_{f}-\rho_{g})
	\int^1_0\frac{\partial\mathcal{M}_{\nu}(\theta)}{\partial \rho}d\theta
	+(U_{f}-U_{g})\int^1_0\frac{\partial\mathcal{M}_{\nu}(\theta)}{\partial U}d\theta
	+(\mathcal{T}_{f}-\mathcal{T}_{g})\int^1_0\frac{\partial\mathcal{M}_{\nu}(\theta)}{\partial \mathcal{T}_{\nu}}d\theta\cr
	&\quad\equiv (\rho_{f}-\rho_{g})I_1+(U_{f}-U_{g})I_2+(\mathcal{T}_{f}-\mathcal{T}_{g})I_3,
	\end{split}
	\end{align}
	where we used abbreviated notation:
	\begin{eqnarray*}
		\frac{\partial\mathcal{M}_{\nu}(\theta)}{\partial X}=\frac{\partial\mathcal{M}_{\nu}}{\partial X}
		(\rho_{\theta}, U_{\theta}, \mathcal{T}_{\theta})
	\end{eqnarray*}
	and $(\rho_{\theta}, U_{\theta}, \mathcal{T}_{\theta})=(1-\theta)\big(\rho_{f}, U_{f}, \mathcal{T}_{f}\big)
	+\theta\big(\rho_{g}, U_{g}, \mathcal{T}_{g}\big)$.
	Since the transitional macroscopic fields $(\rho_{\theta}, U_{\theta}, \mathcal{T}_{\theta})$ are all linear combinations of macroscopic fields of $f$ and $g$, all the estimates for the macroscopic fiels given in $(\mathcal{A}_2)$, $(\mathcal{B}_2)$, $(\mathcal{C}_2)$ and $(\mathcal{D}_2)$ ($i=1,2$) hold the same for the transitional macroscopic fields too. Therefore, we will refer to the corresponding properties of $\Omega_i$ $(i=1,2)$ for $\rho$, $U$, $\mathcal{T}_{\nu}$, whenever such estimates are needed for $(\rho_{\theta}, U_{\theta}, \mathcal{T}_{\theta})$.

	%
	%
	%
	%
	\noindent(a) Estimate for $I_1$: Since
	\begin{eqnarray*}
		\frac{\partial\mathcal{M}_{\nu}(\theta)}{\partial\rho}=\frac{1}{\rho_{\theta}}\mathcal{M}_{\nu}(\theta),
	\end{eqnarray*}
	we see from $(\mathcal{B}_2)$ of $\Omega_2$ and Lemma \ref{Max decomposition} that
	\begin{eqnarray}
	I_1\leq  \frac{1}{\delta_1}C_{\ell,u}e^{-C_{\ell,u}|v|^2}.
	\end{eqnarray}
	\noindent(b) Estimate for $I_2$: An explicit computation gives
	\begin{eqnarray*}
		\frac{\partial\mathcal{M}_{\nu}(\theta)}{\partial U}
		=-\frac{1}{2}\Big\{(v-U_{\theta})^{\top}\mathcal{T}^{-1}_{\theta}+\mathcal{T}^{-1}_{\theta}(v-U_{\theta})\Big\}
		\mathcal{M}_{\nu}(\theta).
	\end{eqnarray*}
	Put $X=v-U_{\theta}$ and recall the property ($\mathcal{C}_2$) of $\Omega_2$ to compute
	\begin{align}\label{similar}
		\begin{split}
		|X^{\top}\mathcal{T}_{\theta}^{-1}|&=\sup_{|Y|=1}X^{\top}\{\mathcal{T}_{\theta}\}^{-1}Y\cr
		&=\frac{1}{2}\sup_{|Y|=1}\Big\{(X+Y)^{\top}\{\mathcal{T}_{\theta}\}^{-1}(X+Y)-X^{\top}\{\mathcal{T}_{\theta}\}^{-1}X-Y^{\top}\{\mathcal{T}_{\theta}\}^{-1}Y\Big\}\cr
		&\leq \frac{2a_{\ell,1}}{\delta_1a_{-1/2,1}}\sup_{|Y|=1}\left(|X+Y|^2+|X|^2+|Y|^2\right)\cr
		&\leq \frac{8a_{\ell,1}}{\delta_1a_{-1/2,1}}(1+|X|^2),
		\end{split}
	\end{align}
	so that
	\begin{align}\label{so that}
		|\{\mathcal{T}_{\theta}\}^{-1}(v-U_{\theta })|\leq C\frac{8a_{\ell,1}}{\delta_1a_{-1/2,1}}(1+|v|^2).
	\end{align}
	Therefore, in view of Lemma \ref{Max decomposition}, we have
	\begin{eqnarray*}
		\Big|\frac{\partial\mathcal{M}_{\nu}(\theta)}{\partial U}\Big|
		\leq  C_{\ell,u}e^{-C_{\ell,u}|v|^2}.
	\end{eqnarray*}
	\noindent(c) Estimate for $I_3$: We compute
	\begin{align}\label{plug}
		\frac{\partial \mathcal{M}_{\nu}(\theta)}{\partial \mathcal{T}_{ij}}
		=\frac{1}{2}\left[-\frac{1}{\det\mathcal{T}_{\theta}}\frac{\partial\det\mathcal{T}_{\theta}}{\partial\mathcal{T}_{\theta ij}}
		+(v-U_{\theta})^{\top}\mathcal{T}_{\theta}^{-1}\left(\frac{\partial\mathcal{T}_{\theta}}{\partial{\mathcal{T}_{ij}}}\right)\mathcal{T}_{\theta}^{-1}(v-U_{\theta})\right]
		\mathcal{M}_{\nu}(\theta)
	\end{align}
	 and observe that, for each pair of $(i,j)$, $\frac{\partial\mathcal{T}_{\theta}}{\partial{\mathcal{T}_{\theta ij}}}$ is either 1 or 0, so that
	\begin{eqnarray}\label{use3}\begin{split}
	\qquad\left|(v-U_{\theta})^{\top}\mathcal{T}^{-1}_{\theta}\left(\frac{\partial\mathcal{T}_{\theta}}{\partial{\mathcal{T}_{\theta ij}}}\right)\mathcal{T}^{-1}_{\theta}
	(v-U_{\theta})\right|
	&\leq \left|(v-U_{\theta})^{\top}\mathcal{T}^{-1}_{\theta}\right|
	\left|\mathcal{T}^{-1}_{\theta}(v-U_{\theta})\right|\cr
	\qquad&\leq C_{\ell,u}(1+|v|^2).
	\end{split}
	\end{eqnarray}
Here, we used (\ref{so that}).	
	On the other hand, to estimate the derivatives of the determinant, we write $\frac{\partial\det\mathcal{T}_{\theta}}{\partial\mathcal{T}_{\theta ij}}$ as follows:
	\[
	\sum_{i,j,m,n} C_{ijmn}\mathcal{T}_{\theta ij}\mathcal{T}_{\theta mn}
	\]
	for some constants $C_{ijmn}$. We then note that Lemma \ref{gl} (2) implies
	\[
	\mathcal{T}_{\theta ij}\leq \frac{8}{2a_{\ell,1}\delta_1}C_{LR,1}.
	\]
	Indeed,  as in (\ref{similar}), we see that
		\begin{align*}
		\begin{split}
			|\mathcal{T}_{\theta ij}|&=|e^{\top}_i\mathcal{T}_{\theta}e_j|\cr
			&=\frac{1}{2}\sup_{|Y|=1}\Big|\Big\{(e_i+e_j)^{\top}\mathcal{T}_{\theta}(e_i+e_j)-e_i^{\top}\mathcal{T}_{\theta}e_i-e_j^{\top}\mathcal{T}_{\theta}e_j\Big\}\Big|\cr
			&\leq \frac{2a_{\ell,1}}{\delta_1a_{-1/2,1}}\left(|e_i+e_j|^2+|e_i|^2+|e_j|^2\right)\cr
			&= \frac{8a_{\ell,1}}{\delta_1a_{-1/2,1}}.
		\end{split}
	\end{align*}
Therefore, we can estimate
	\[
		\left|\frac{\partial\det\mathcal{T}_{\theta}}{\partial\mathcal{T}_{\theta ij}}\right|\leq \left(\frac{3}{2a_{\ell,1}\delta_1}C_{LR,1}\right)^2.
	\]
	Finally, we recall ($\mathcal{B}_2$) of $\Omega_2$ to find
\begin{align}\label{det2}
	\det \mathcal{T}^{}_{\theta}\geq	\left(\delta_1\frac{a_{-1/2}}{2C_{LR,1}}\right)^3,\qquad (\nu=-1/2)
\end{align}

We plug (\ref{use3}), (\ref{hence}) and (\ref{det2}) into (\ref{plug}), and employ
Lemma \ref{Max decomposition} to get
\begin{align}\label{hence}
	\Big|\frac{\partial \mathcal{M}_{\nu}(\theta)}{\partial \mathcal{T}_{\theta ij}}\Big|\leq C\delta_1^{-8}(1+|v|^2)\mathcal{M}_{\nu}(\theta)\leq Ce^{-C|v|^2}.
\end{align}
(Note that $\delta_1$ is not small in the inflow dominant case.)
We now turn back to (\ref{turnbackto}) with these estimates to derive
	\begin{align}\label{obtain}
		\begin{split}
			&|\mathcal{M}_{\nu}(f)-\mathcal{M}_{\nu}(g)|
			\leq C\Big\{|\rho_{f}-\rho_{g}|+|U_{f}-U_{g}|
			+|\mathcal{T}_{f}-\mathcal{T}_{g}|\Big\} e^{-C|v|^2}.
		\end{split}
	\end{align}
For the difference of macroscopic fields, we treat as
	\begin{eqnarray*}
		|\rho_{f}-\rho_{g}|=\int_{\mathbb{R}^3} |f-g|dv\leq C\sup_x\|f-g\|_{L^1_2},
	\end{eqnarray*}
	\begin{eqnarray*}
		|U_{f}-U_{g}|
		\leq\frac{1}{\rho_f}|\rho_fU_f-\rho_gU_g|+\frac{1}{\rho_f}|\rho_f-\rho_g||U_g|
		\leq C_{\ell,u}{\delta_1^{-1}}\sup_x\|f-g\|_{L^{1}_2},
	\end{eqnarray*}
	and
	\begin{eqnarray*}
		|\mathcal{T}_{f}-\mathcal{T}_{g}|
		\leq\frac{1}{\rho_f}|\rho_f\mathcal{T}_{f}-\rho_g\mathcal{T}_{g}|
		+\frac{1}{\rho_f}|\rho_f-\rho_g||\mathcal{T}_{g}|=C_{\ell,u}{\delta^{-1}_1}\sup_x\|f-g\|_{L^{1}_2}.
	\end{eqnarray*}
This completes the proof.
\end{proof}

\begin{proposition}\label{fixed point 2}
Suppose $f^{n}, \, f^{n+1}\in\Omega_i$. $(i=1,2)$ Then, under the assumption of Theorem \ref{Main1}, we have 
\begin{align*}
&\sup_{x}\|f^{n+1}-f^{n}\|_{L^1_{2}}+\|f^{n+1}-f^{n}\|_{L^1_{\gamma,|v_1|}}+\|f^{n+1}-f^{n}\|_{L^1_{\gamma,\langle v\rangle}}\cr
&\qquad \preceq \left(\frac{\ln \tau+1}{\tau }\right)\sup_{x}\|f^{n}-f^{n-1}\|_{L^1_2}
+(\delta_2+\delta_3)\|f^{n}-f^{n-1}\|_{L^1_{\gamma,|v_1|}}+\delta_3\|f^{n}-f^{n-1}\|_{L^1_{\gamma,\langle v\rangle}}
\end{align*}

\end{proposition}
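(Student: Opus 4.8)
The plan is to subtract the two iteration formulas defining $f^{n+1}$ and $f^n$ and to track the two distinct mechanisms that generate the difference. Since the inflow data $\delta_1 f_{LR}$ entering the boundary terms (\ref{f_delta}) is independent of $n$, it cancels in the difference, so that $f^{n+1}-f^n$ is driven only by (i) the relaxation source through $\mathcal{M}_{\nu}(f^n)-\mathcal{M}_{\nu}(f^{n-1})$ and (ii) the diffusive and specular boundary contributions carrying the prefactors $\delta_2$ and $\delta_3$. Concretely, for $v_1>0$ the difference of (\ref{f+}) at levels $n+1$ and $n$ reads
\begin{align*}
f^{n+1}-f^n &= e^{-\frac{x}{\tau|v_1|}}\big(f^{n+1}(0,v)-f^n(0,v)\big)\cr
&\quad+\frac{1}{\tau|v_1|}\int_0^x e^{-\frac{x-y}{\tau|v_1|}}\big(\mathcal{M}_{\nu}(f^n)-\mathcal{M}_{\nu}(f^{n-1})\big)\,dy,
\end{align*}
with the analogous identity for $v_1<0$ coming from (\ref{f-}), and the boundary difference obtained from (\ref{f_delta}) is
\begin{align*}
f^{n+1}(0,v)-f^n(0,v)&=\delta_2\Big(\int_{v_1<0}\big(f^n-f^{n-1}\big)(0,v)|v_1|dv\Big)M_w\cr
&\quad+\delta_3\big(f^n-f^{n-1}\big)(0,Rv),\qquad (v_1>0).
\end{align*}

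Next I would control the source once and for all: Proposition \ref{Lipshitz} gives $|\mathcal{M}_{\nu}(f^n)-\mathcal{M}_{\nu}(f^{n-1})|\leq C\sup_x\|f^n-f^{n-1}\|_{L^1_2}e^{-C|v|^2}$, and Lemma \ref{main estimate} turns the velocity--time integral of this Gaussian against $\frac{1}{\tau|v_1|}e^{-(x-y)/(\tau|v_1|)}$ into the small factor $(\ln\tau+1)/\tau$. For the interior bound I multiply the difference identity by $(1+|v|^2)$ and integrate over $v$: the boundary term is estimated using $e^{-x/(\tau|v_1|)}\leq 1$ exactly as in Lemma \ref{au}, producing $\delta_2$ times the outflux trace $\|f^n-f^{n-1}\|_{L^1_{\gamma,|v_1|,+}}$ (through $\|M_w\|_{L^1_{\gamma,\langle v\rangle}}$) and $\delta_3$ times $\|f^n-f^{n-1}\|_{L^1_{\gamma,\langle v\rangle,+}}$, while the source term yields $C\frac{\ln\tau+1}{\tau}\sup_x\|f^n-f^{n-1}\|_{L^1_2}$. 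Taking the supremum in $x$ closes the $\sup_x\|\cdot\|_{L^1_2}$ piece.

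For the two trace norms I would evaluate the difference identity at $x=0$ and $x=1$ and integrate against $|v_1|dv$, respectively $(1+|v|^2)dv$, mimicking the bookkeeping of Lemma \ref{auu}. The outflux difference picks up the source contribution $C\frac{\ln\tau+1}{\tau}\sup_x\|f^n-f^{n-1}\|_{L^1_2}$ together with $(\delta_2+\delta_3)\|f^n-f^{n-1}\|_{L^1_{\gamma,|v_1|,+}}$ (resp. $\delta_3\|f^n-f^{n-1}\|_{L^1_{\gamma,\langle v\rangle,+}}$), and the influx difference is then bounded by the outflux difference exactly as (\ref{ff11-}) follows from (\ref{outflux}); the $\delta_1$ term is absent throughout because it cancels in the difference. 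Summing the three quantities gives the asserted estimate.

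The main obstacle I anticipate is the bookkeeping that keeps each prefactor attached to the correct norm: the relaxation source must only ever produce the interior norm $\sup_x\|f^n-f^{n-1}\|_{L^1_2}$ with the small weight $(\ln\tau+1)/\tau$, whereas the diffusive term must be matched to the $|v_1|$-weighted trace norm (via the normalization $\|M_w\|_{L^1_{\gamma,|v_1|,\pm}}=1$) and the specular term to the $\langle v\rangle$-weighted trace norm. Because the outflux and influx traces are coupled through the boundary conditions, closing the trace estimates requires using the membership $f^n,f^{n+1}\in\Omega_i$ (properties $(\mathcal{B}_i)$ and $(\mathcal{D}_i)$) to control the various densities, and one must verify that no spurious constant is produced that would spoil the smallness of the prefactors $(\delta_2+\delta_3)$, $\delta_3$, and $(\ln\tau+1)/\tau$ needed for the subsequent contraction argument.
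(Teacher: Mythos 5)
Your proposal is correct and follows essentially the same route as the paper: the $\delta_1 f_{LR}$ contribution cancels in the difference, the boundary terms are handled by the normalization of $M_w$ and the reflection so that $\delta_2$ attaches to the $|v_1|$-weighted trace and $\delta_3$ to the $\langle v\rangle$-weighted trace, and the relaxation source is controlled by combining Proposition \ref{Lipshitz} with Lemma \ref{main estimate} to produce the factor $(\ln\tau+1)/\tau$ in front of $\sup_x\|f^n-f^{n-1}\|_{L^1_2}$. The paper organizes this by splitting the trace at $x=0$ (for $v_1<0$) into the pieces $I(f)=e^{-x/(\tau|v_1|)}f(1,v)$ and $II(f)=\frac{1}{\tau|v_1|}\int_0^1 e^{-(x-y)/(\tau|v_1|)}\mathcal{M}_{\nu}(f)\,dy$ and then closes the outflux/influx coupling exactly as you describe, so no further comment is needed.
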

\begin{proof}

$(1)$ Estimates in the trace norm $\|\cdot\|_{L^1_{\gamma,|v_1|}}$:
First, we note from our boundary condition that, for $v_1>0$
\begin{align*}
\begin{split}
f^{n+1}(0,v)-f^n(0,v)&=\delta_2\left(\int_{v_1<0}\left\{f^n(0,v)-f^{n-1}(0,v)\right\}|v_1|dv\right)M_{w}(0)\cr
&+\delta_3\left\{f^n(0,Rv)-f^{n-1}(0,Rv)\right\}.
\end{split}
\end{align*}
Taking integration w.r.t $|v_1|dv$,
\begin{align}\label{sum1}
	\begin{split}
		&\int_{v_1>0}|f^{n+1}(0,v)-f^n(0,v)||v_1|dv\cr
		&\qquad\leq\delta_2\left(\int_{v_1<0}|f^n(0,v)-f^{n-1}(0,v)||v_1|dv\right)\int_{v_1>0}M_w(0)|v_1|dv\cr
		&\qquad+\delta_3\int_{v_1>0}|f^n(0,Rv)-f^{n-1}(0,Rv)||v_1|dv\cr
		&\qquad\leq(\delta_2+\delta_3)\int_{v_1<0}|f^n(0,v)-f^{n-1}(0,v)||v_1|dv.	
	\end{split}
\end{align}
On the other hand, for $v_1<0$, we have from (\ref{f-})
\[
f^{n+1}(0,v)=I(f^n)+II(f^n),
\]
where 
\begin{eqnarray*}
I(f)=e^{-\frac{x}{\tau|v_1|}}f(1,v),\quad
II(f)=\frac{1}{\tau|v_1|}\int^1_0e^{-\frac{x-y}{\tau|v_1|}}\mathcal{M}_{\nu}(f)dy,
\end{eqnarray*}
where for the sake of clarity $\rho_f$ is the local density associated
to $f$ and $\rho_g$ is the local density associated to $g$.

\noindent $(1$ - $i)$ The estimate for $I(f^n)-I(f^{n-1})$: Since
\begin{align*}
I(f^{n})-I(f^{n-1})
&=\delta_2 e^{-\frac{x}{\tau|v_1|}}M_w(1)\left(\int_{v_1>0}\left\{f^n(1,v)-f^{n-1}(1,v)\right\}|v_1|dv\right)\cr
&+\delta_3 e^{-\frac{x}{\tau|v_1|}}\left\{f^n(1,Rv)-f^{n-1}(1,Rv)\right\},
\end{align*}

we have
\begin{align}\label{sum2}
	\begin{split}
&\int_{v_1<0}|I(f^{n})-I(f^{n-1})|v_1|dv\cr
&\quad\geq \delta_2\left\{\int_{v_1<0}e^{-\frac{a_{\ell,1}}{\tau|v_1|}}M_w(v)|v_1|dv\right\}\left\{\int_{v_1>0}|f^n(1,v)-f^{n-1}(1,v)||v_1|dv\right\}\cr
&\quad+\delta_3\left\{\int_{v_1>0}|f^n(1,v)-f^{n-1}(1,v)||v_1|dv\right\}\cr
&\quad\leq(\delta_2+\delta_3) 
\left\{\int_{v_1>0}|f^n(1,v)-f^{n-1}(1,v)||v_1|dv\right\}
\end{split}
\end{align}

$\bullet$ The estimate for $II(f)-II(g)$: 


We recall Lemma \ref{main estimate} and Proposition \ref{Lipshitz} to estimate 
\begin{align}\label{sum5}
	\begin{split}
&\hspace{-0.5cm}\int_{v_1>0}|II(f^n)-II(f^{n-1})|v_1|dv\cr
& \leq \int_{v_1>0}\frac{1}{\tau|v_1|}\int^x_0e^{-\frac{1}{\tau|v_1|}}| \mathcal{M}_{\nu}(f^n)-\mathcal{M}_{\nu}(f^{n-1})||v_1|dydv\cr
&\leq C\left\{\int^1_0\int_{v_1>0}\frac{1}{\tau|v_1|}e^{-\frac{x-y}{\tau|v_1|}}
e^{-C_{\ell,u}|v|^2}dv_1dy\right\}
\sup_{x}\|f^n-f^{n-1}\|_{L^1_2}\cr
&\leq C\left(\frac{\ln \tau+1}{\tau}\right)\sup_{x}\|f^n-f^{n-1}\|_{L^1_2}.
\end{split}
\end{align}
We sum up (\ref{sum1}), (\ref{sum2}) and (\ref{sum5}) to obtain
\begin{align*}
\|f^{n+1}-f^{n}\|_{L^1_{\gamma,|v_1|,-}}
&\preceq\left(\frac{\ln \tau+1}{\tau }\right)\sup_{x}\|f^{n}-f^{n-1}\|_{L^1_{2}}+(\delta_2+\delta_3)\|f^{n}-f^{n-1}\|_{L^1_{\gamma,|v_1|,+}}.
\end{align*}
In an almost identical manner, we can derive
\begin{align*}
	\|f^{n+1}-f^{n}\|_{L^1_{\gamma,|v_1|,+}}
	&\preceq \left(\frac{\ln \tau+1}{\tau }\right)\sup_{x}\|f^{n}-f^{n-1}\|_{L^1_{2}}
	+(\delta_2+\delta_3)\|f^{n}-f^{n-1}\|_{L^1_{\gamma,|v_1|,-}}.
\end{align*}
Therefore, 
\begin{align}\label{final1}
	\begin{split}
	\|f^{n+1}-f^{n}\|_{L^1_{\gamma,|v_1|}}
	& \preceq\left(\frac{\ln \tau+1}{\tau }\right)\sup_{x}\|f^{n}-f^{n-1}\|_{L^1_{2}}
	+(\delta_2+\delta_3)\|f^{n}-f^{n-1}\|_{L^1_{\gamma,|v_1|}}.\cr
\end{split}
\end{align}
(2) Estimates in the trace norm $\|\cdot\|_{L^1_{\gamma,\langle v_1\rangle}}$: We only estimate the boundary term $I$, since the estimates for $II$ are almost identical.
By an almost identical calculation, we arrive at 
\begin{align*}
	\begin{split}
		&\int_{v_1<0}|I(f^{n})-I(f^{n-1})|\langle v\rangle dv\cr
		&\quad \leq \delta_2\left\{\int_{v_1<0}e^{-\frac{a_{\ell,1}}{\tau|v_1|}}M_w(1)\langle v\rangle dv\right\}\left\{\int_{v_1>0}|f^n(1,v)-f^{n-1}(1,v)||v_1|dv\right\}\cr
		&\quad+\delta_3\left\{\int_{v_1>0}|f^n(1,v)-f^{n-1}(1,v)||v_1|dv\right\}\cr
		&\quad\preceq(\delta_2+\delta_3) 
		\int_{v_1>0}|f^n(1,v)-f^{n-1}(1,v)|\langle v\rangle dv.
	\end{split}
\end{align*}
Then, through similar computaitons for $II$ terms, (we omit the proof to avoid repetitions.) we can obtain the estimates in $\|\cdot\|_{L^1_{\gamma,\langle v\rangle}}$:
\begin{align}\label{final2}
	\begin{split}
	\|f^{n+1}-f^{n}\|_{L^1_{\gamma,\langle v\rangle}}
	&\preceq\left(\frac{\ln t+1}{\tau}\right)\sup_{x}\|f^{n}-f^{n-1}\|_{L^1_{2}}\cr
	&+\delta_2\|f^{n}-f^{n-1}\|_{L^1_{\gamma,|v_1|}}
	+\delta_3\|f^{n}-f^{n-1}\|_{L^1_{\gamma,\langle v\rangle}} .
\end{split}
\end{align}
The estimates in $\sup_x\|\cdot\|_{L^1_2}$ can be derived similarly:
\begin{align}\label{final3}
	\begin{split}
		\|f^{n+1}-f^{n}\|_{L^1_2}
			&\preceq
                        \left(\frac{\ln t+1}{\tau}\right)\sup_{x}\|f^{n}-
f^{n-1}\|_{L^1_{2}}\cr
		&+\delta_2\|f^{n}-f^{n-1}\|_{L^1_{\gamma,|v_1|}}
		+\delta_3\|f^{n}-f^{n-1}\|_{L^1_{\gamma,\langle v\rangle}}.
	\end{split}
\end{align}
The estimates (\ref{final1}), (\ref{final2}) and (\ref{final3}) give the desired result.
\end{proof}

%
%
%
%
%


\section{Proof of Theorem \ref{Main2}: The diffusive boundary condition}
We now turn to the proof of Theorem \ref{Main2}. Since many parts overlap with the proof of Theorem \ref{Main1}, we focus on the difference of the proof. We start with the reformulation of the problem.
\subsection{Reformulation of the problem} 
Consider the following mild formulation of (\ref{ESBGK}):
\begin{align*}
	\begin{split}
		f(x,v)&=\delta_1 f_L+\delta_2\left(\int_{v_1<0}f(0,v)|v_1|dv\right)M_w(0)+\delta_3f(0,Rv)\cr
		&+\frac{1}{\tau|v_1|}\int^x_0
		\mathcal{R}(y,v)dy,\quad (v_1>0)\cr 
		f(x,v)&=\delta_1 f_R+\delta_2\left(\int_{v_1>0}f(1,v)|v_1|dv\right)M_w(1)+\delta_3f(1,Rv)\cr
		&+\frac{1}{\tau|v_1|}\int^1_x\mathcal{R}(y,v)dy,\quad (v_1<0)
	\end{split}
\end{align*}
so that
\begin{align*}
	\begin{split}
		f(1,v)&=\delta_1 f_L+\delta_2\left(\int_{v_1<0}f(0,v)|v_1|dv\right)M_w(0)+\delta_3f(0,Rv)\cr
		&+\frac{1}{\tau|v_1|}\int^1_0
		\mathcal{R}(y,v)dy,\quad (v_1>0)\cr
		f(0,v)&=\delta_1 f_R+\delta_2\left(\int_{v_1>0}f(1,v)|v_1|dv\right)M_w(1)+\delta_3f(1,Rv)\cr
	&+\frac{1}{\tau|v_1|}\int^1_0\mathcal{R}(y,v)dy,\quad (v_1<0).
	\end{split}
\end{align*}
Integrating with respect to $|v_1|dv$:
\begin{align}\label{this}
	\begin{split}
		\int_{v_1>0}f(1,v)|v_1|dv&
		=\delta_1 \int_{v_1>0}f_L|v_1|dv + (\delta_2+\delta_3)\int_{v_1<0}f(0,v)|v_1|dv\cr&
		+\frac{1}{\tau}\int_{v_1>0}\int^1_0\mathcal{R}(y,v)dydv,\cr
		\int_{v_1<0}f(0,v)|v_1|dv&=\delta_1 \int_{v_1<0}f_R|v_1|dv+(\delta_2+\delta_3)\int_{v_1>0}f(1,v)|v_1|dv\cr&
		+\frac{1}{\tau}\int_{v_1<0}\int^1_0\mathcal{R}(y,v)dydv,
	\end{split}
\end{align}
where
\begin{align*}
\mathcal{R}(f)(x,v)=\mathcal{M}_{\nu}(f)(x,v)-f(x,v),
\end{align*}
throughout this section.
Inserting $(\ref{flux c})$ into $(\ref{this})_1$, we get
\begin{align*}
\int_{v_1<0}f(0,v)|v_1|dv&=\frac{1-\delta_1}{2-\delta_1}+\frac{\delta_1}{2-\delta_1}\int_{v_1<0}f_R|v_1|dv
-\frac{1}{\tau(2-\delta_1)}\int_{v_1>0}\int^1_0\mathcal{R}(y,v)dydv.
\end{align*}
Similarly, from $(\ref{flux c})$ and $(\ref{this})_2$, we get
\begin{align*}
\int_{v_1>0}f(1,v)|v_1|dv&=\frac{1- \delta_1}{2-\delta_1}+\frac{\delta_1}{2-\delta_1}\int_{v_1>0}f_L|v_1|dv
-\frac{1}{\tau(2-\delta_1)}\int_{v_1<0}\int^1_0\mathcal{R}(y,v)dydv.
\end{align*}
From this, we derive the new formulation of the problem given in Definition 1.3.

%
%
%
%
\subsection{Approximation scheme and solution spaces}
We construct the solution for (\ref{ESBGK}) from the following approximate scheme:
\begin{align}\label{f+ dff}
	\begin{split}
		f^{n+1}(x,v)&=e^{-\frac{x}{\tau|v_1|}}f^n(0,v)
		+\frac{1}{\tau|v_1|}\int_{0}^{x} e^{-\frac{x-y}{\tau|v_1|}}\mathcal{M}_{\nu}(f^n)dy,
		\quad\text{if $v_{1}>0$}
	\end{split}
\end{align}
and
\begin{align}\label{f- dff}
	\begin{split}
		f^{n+1}(x,v)&=e^{-\frac{x}{\tau|v_1|}}f^n(1,v)
		+\frac{1}{\tau|v_1|}\int_{x}^1 e^{-\frac{x-y}{\tau|v_1|}}
		\mathcal{M}_{\nu}(f^n)dy,
		\quad\text{if $v_{1}<0$}
	\end{split}
\end{align}
where
\begin{align}\label{f_delta dff}
	\begin{split}
		f^{n+1}(0,v)&=\delta_1f_L(v)+\delta_2\mathcal{S}_L(f^n)M_w(0)+\delta_3 f^n(0,Rv),\quad (v_1>0)\cr
		f^{n+1}(1,v)&=\delta_1f_R(v)+\delta_2\mathcal{S}_R(f^n)M_w(1)+\delta_3 f^n(1,Rv),\quad (v_1<0)
	\end{split}
\end{align}
and 
\begin{align*}
	\begin{split}
	\mathcal{S}_L(f^n)&=\frac{1-\delta_1}{2-\delta_1}+\frac{\delta_1}{2-\delta_1}\int_{v_1<0}f_R|v_1|dv
	-\frac{1}{\tau(2-\delta_1)}\int_{v_1>0}\int^1_0\mathcal{R}^n(y,v)dydv\cr
	\mathcal{S}_R(f^n)&=\frac{1-\delta_1}{2-\delta_1}+\frac{\delta_1}{2-\delta_1}\int_{v_1>0}f_L|v_1|dv
	-\frac{1}{\tau(2-\delta_1)}\int_{v_1<0}\int^1_0\mathcal{R}^n(y,v)dydv
\end{split}
\end{align*}
with
\begin{align*}
		\mathcal{R}^n(y,v)=\rho^n\left\{\mm(f^n)-f^n\right\}.
\end{align*}
As in the inflow dominant case, we define two function spaces. First we define the function space for the non-critical case $-1/2<\nu<1$:
\begin{align*}
	\Omega_3=\Big\{f\in L^{\infty}\left([0,1]; L^1_2(\mathbb{R}^3)\right)\cap L^1_{\gamma,\langle v\rangle}(\mathbb{R}^3)~|&~f  \mbox{ satisfies } (\mathcal{A}_3), (\mathcal{B}_3), (\mathcal{C}_3), (\mathcal{D}_3)
	\Big\}
\end{align*}
 where $(\mathcal{A}_3)$, $(\mathcal{B}_3)$, $(\mathcal{C}_3)$ and $(\mathcal{D}_3)$ denote
\begin{itemize}
	\item ($\mathcal{A}_3$) $f$ is non-negative:
	\[
	f(x,v)\geq0 \mbox{ for }x,v\in [0,1]\times \mathbb{R}^3.
	\]
	\item ($\mathcal{B}_3$) The macroscopic field is well-defined:
	\begin{align*}
		&\int_{\mathbb{R}^3}f(x,v)dv\geq a_{\ell,2},\quad \int_{\mathbb{R}^3}f(x,v)(1+|v|^2)dv\leq 
		2C_{LR,2}.
	\end{align*}
	\item ($\mathcal{C}_3$) The temperature tensor is well-defined:
	\begin{align*}
		C^1_{\nu}\delta_1^2\frac{\gamma_{\ell,2}}{3C_{LR,2}^2}\leq\kappa^{\top}\left\{\mathcal{T}_{\nu}\right\}\kappa\leq \frac{2 }{{3a_{\ell,2}}}C^2_{\nu}C_{LR,2}.
	\end{align*}
	\item ($\mathcal{D}_3$) The inflow data satisfies:
	\begin{align*}
		&\|f\|_{L^1_{\gamma,|v_1|,\pm}}\leq 2\big(1+\|f_{LR}\|_{L^1_{\gamma,|v_1|}}\big),\quad\|f\|_{L^1_{\gamma,\langle v\rangle}}\leq 2C_{LR,2}.
	\end{align*}
\end{itemize}
For the critical case $\nu=-1/2$, we define 
\begin{align*}
	\Omega_4=\Big\{f\in L^{\infty}\left([0,1]; L^1_2(\mathbb{R}^3)\right)\cap L^1_{\gamma,\langle v\rangle}(\mathbb{R}^3)~|&~f  \mbox{ satisfies } (\mathcal{A}_4), (\mathcal{B}_4), (\mathcal{C}_4), (\mathcal{D}_4)
	\Big\}
\end{align*}
 where $(\mathcal{A}_4)$, $(\mathcal{B}_4)$, $(\mathcal{C}_4)$ and $(\mathcal{D}_4)$ denote
\begin{itemize}
	\item ($\mathcal{A}_4$) $f$ is non-negative:
	\[
	f(x,v)\geq0 \mbox{ for }x,v\in [0,1]\times \mathbb{R}^3.
	\]
	\item ($\mathcal{B}_4$) The macroscopic field is well-defined:
	\begin{align*}
		&\int_{\mathbb{R}^3}f(x,v)dv\geq a_{\ell,2},\quad \int_{\mathbb{R}^3}f(x,v)(1+|v|^2)dv\leq 2C_{LR,2}.
	\end{align*}
	\item ($\mathcal{C}_4$) The temperature tensor is well-defined:
	\begin{align*}
		\delta_2\frac{a_{-1/2}}{2C_{LR,2}}\leq\kappa^{\top}\left\{\mathcal{T}_{-1/2}\right\}\kappa\leq \frac{3}{2a_{\ell,2}}C_{LR,2}.
	\end{align*}
	\item ($\mathcal{D}_4$) The inflow data satisfies:
	\begin{align*}
		\|f\|_{L^1_{\gamma,|v_1|,\pm}}&\leq 2\big(1+\|f_{LR}\|_{L^1_{\gamma,|v_1|}}\big),\quad\|f\|_{L^1_{\gamma,\langle v\rangle,\pm}}\leq 2C_{LR,2}.
	\end{align*}
\end{itemize}

%
%
%
%

Before we move on to the proof of uniform estimates for $f^n$, we recored a few estimates that will be fruitfully used throughout the paper. The proof for the Lemma \ref{Max decomposition dff}  is almost identical to the corresponding estimates in
Lemma \ref{Max decomposition}, and we omit the proof.
\begin{lemma}\label{Max decomposition dff} $(1)$ Let $f\in\Omega_3$. Then there exist  positive constants $C$  depending only on the quantities (\ref{quantities}) and $\gamma_{\ell,2}$ such that
	\[
	\mathcal{M}_{\nu}(f)\leq  Ce^{-C|v|^2}.
	\]
	$(2)$ Let $f\in\Omega_4$. Then there exists  positive constants $C$  depending only on the quantities (\ref{quantities}) and $a_{-1/2,2}$ such that
	\[
	\mathcal{M}_{\nu}(f)\leq  Ce^{-C|v|^2}.
	\]
	
\end{lemma}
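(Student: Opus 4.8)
The plan is to transcribe the proof of Lemma \ref{Max decomposition} essentially verbatim, replacing the inflow-dominant constants $a_{\ell,1}$, $C_{LR,1}$, $\gamma_{\ell,1}$, $a_{-1/2,1}$ by their diffusive-dominant counterparts $a_{\ell,2}$, $C_{LR,2}$, $\gamma_{\ell,2}$, $a_{-1/2,2}$, and the properties $(\mathcal{B}_i)$, $(\mathcal{C}_i)$ of $\Omega_i$ $(i=1,2)$ by the corresponding properties $(\mathcal{B}_j)$, $(\mathcal{C}_j)$ of $\Omega_j$ $(j=3,4)$. For both $f\in\Omega_3$ and $f\in\Omega_4$ the task is to bound, uniformly in $x$ and $n$, the three ingredients of $\mathcal{M}_{\nu}(f)=\rho\,\{\det(2\pi\mathcal{T}_{\nu})\}^{-1/2}\exp\big(-\tfrac12(v-U)^{\top}\mathcal{T}^{-1}_{\nu}(v-U)\big)$: the density $\rho$ in the prefactor, the determinant $\det\mathcal{T}_{\nu}$ in the denominator, and the quadratic form in the exponent.

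First I would bound the bulk velocity. Property $(\mathcal{B}_3)$ (resp. $(\mathcal{B}_4)$) gives $\rho\geq a_{\ell,2}$ and $\int_{\mathbb{R}^3}f(1+|v|^2)\,dv\leq 2C_{LR,2}$, whence $|U|=|\rho U|/\rho\leq 2C_{LR,2}/a_{\ell,2}$, exactly as in (\ref{i1}). Thus the Gaussian is centered inside a fixed ball whose radius depends only on the boundary data.

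Next I would control the exponent and the determinant through the eigenvalues of $\mathcal{T}_{\nu}$, which is symmetric and hence diagonalizable. The upper bound on $\kappa^{\top}\mathcal{T}_{\nu}\kappa$ in $(\mathcal{C}_3)$ (resp. $(\mathcal{C}_4)$) bounds the largest eigenvalue of $\mathcal{T}_{\nu}$, hence bounds the smallest eigenvalue of $\mathcal{T}^{-1}_{\nu}$ from below by a positive constant, giving $(v-U)^{\top}\mathcal{T}^{-1}_{\nu}(v-U)\geq c|v-U|^2$ as in (\ref{i2}); combined with the velocity bound and $|v-U|^2\geq\frac12|v|^2-C$ this produces the factor $e^{-C|v|^2}$. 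The lower bound on $\kappa^{\top}\mathcal{T}_{\nu}\kappa$ bounds the smallest eigenvalue from below, so $\det\mathcal{T}_{\nu}=\lambda_1\lambda_2\lambda_3$ is bounded below by a positive constant as in (\ref{i3}). Together with $\rho\leq 2C_{LR,2}$ this bounds the prefactor $\rho\,\{\det(2\pi\mathcal{T}_{\nu})\}^{-1/2}$, and multiplying the two estimates yields $\mathcal{M}_{\nu}(f)\leq Ce^{-C|v|^2}$.

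There is no genuine obstacle here, the content being a direct adaptation; the only point requiring care is the bookkeeping of constants. In the critical case $\nu=-1/2$ the first inequality of the equivalence relation (\ref{ER}) degenerates, so one must feed the genuinely positive lower bound $\delta_2 a_{-1/2,2}/(2C_{LR,2})$ from $(\mathcal{C}_4)$ into the determinant estimate rather than any temperature equivalence, whereas in the non-critical case $-1/2<\nu<1$ one may equivalently route the argument through (\ref{ER}) to pass between $\mathcal{T}_{\nu}$ and $T\mathbb{I}_3$. Since all bounds depend only on the quantities in (\ref{quantities}) together with $\gamma_{\ell,2}$ (resp. $a_{-1/2,2}$), the resulting constants $C$ have the form asserted in the statement.
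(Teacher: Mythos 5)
Your proposal is correct and follows exactly the route the paper intends: the paper explicitly omits this proof, stating it is ``almost identical'' to Lemma \ref{Max decomposition}, and your argument is precisely that adaptation --- bounding $|U|$ via $(\mathcal{B}_3)$/$(\mathcal{B}_4)$ as in (\ref{i1}), the exponent via the upper bound in $(\mathcal{C}_3)$/$(\mathcal{C}_4)$ as in (\ref{i2}), and $\det\mathcal{T}_{\nu}$ from below via the lower bound as in (\ref{i3}), with the constants relabelled to their diffusive-dominant counterparts. Your remark that in the critical case the determinant bound must come from the genuine positivity in $(\mathcal{C}_4)$ rather than the degenerate equivalence (\ref{ER}) matches the paper's treatment of Lemma \ref{Max decomposition}(2).
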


%
%
%
%

%
%
%
%
\subsection{ $f^n\in\Omega_{i}$ $(i=3,4)$ for all $n$ }
The main result of this section is the following proposition
\begin{proposition}\label{fixed point 1 diff}
	\noindent$(1)$ Let $-1/2<\nu<1$. Assume $f_{LR}$ satisfies the conditions of Theorem \ref{Main2} (1). Then $f^{n}\in \Omega_3$ for all $n$.\newline
	\noindent$(2)$ Let $\nu=-1/2$. Assume $f_{LR}$ satisfies the conditions of Theorem \ref{Main2} (2). Then, $f^{n}\in \Omega_4$ for all $n$.
\end{proposition}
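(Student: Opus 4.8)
The plan is to mirror the proof of Proposition \ref{fixed point 1}, establishing the four defining conditions $(\mathcal{A}_i)$, $(\mathcal{B}_i)$, $(\mathcal{C}_i)$, $(\mathcal{D}_i)$ for $i=3,4$ by induction on $n$: assuming $f^n\in\Omega_i$, I would verify that $f^{n+1}$ defined through (\ref{f+ dff})--(\ref{f_delta dff}) again lies in $\Omega_i$. The structure is identical to Lemmas \ref{al}, \ref{auu}, \ref{au}, \ref{23}, \ref{gl}, and the bulk of the moment computations carry over verbatim once the inflow boundary term $\delta_2\big(\int f^n|v_1|dv\big)M_w$ is replaced by the reformulated term $\delta_2\mathcal{S}_L(f^n)M_w(0)$ (and its analogue at $x=1$). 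Consequently the real content reduces to controlling the new flux functionals $\mathcal{S}_L(f^n)$ and $\mathcal{S}_R(f^n)$.

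The first and decisive step is therefore a two-sided bound on $\mathcal{S}_L,\mathcal{S}_R$. Each consists of the leading constant $\frac{1-\delta_1}{2-\delta_1}$, a flux term of size $O(\delta_1)$ controlled by $\|f_{LR}\|_{L^1_{\gamma,|v_1|}}$, and the correction term $-\frac{1}{\tau(2-\delta_1)}\int_{v_1>0}\int_0^1\mathcal{R}^n\,dy\,dv$ (resp. over $v_1<0$). Using $(\mathcal{B}_i)$, $(\mathcal{D}_i)$ and Lemma \ref{Max decomposition dff} to bound $\int|\mathcal{M}_{\nu}(f^n)-f^n|$, the correction is $O(\tau^{-1})$; hence for $\delta_1$ small and $\tau$ large one obtains $\mathcal{S}_L,\mathcal{S}_R\in[\tfrac12-o(1),\tfrac12+o(1)]$, and in particular $\mathcal{S}_L,\mathcal{S}_R>0$. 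This positivity is exactly what reproduces Lemma \ref{al}: non-negativity of $f^{n+1}$ follows since every boundary contribution and the source $\mathcal{M}_{\nu}(f^n)$ are non-negative, while the density lower bound in $(\mathcal{B}_i)$ now comes from the dominant re-emitted wall-Maxwellian flux $\delta_2\mathcal{S}_L e^{-1/|v_1|}M_w$ (aggregated over both walls via the flux normalization (\ref{flux c})), producing the constant $a_{\ell,2}$. The trace bounds $(\mathcal{D}_i)$ and the energy bound $(\mathcal{B}_i)$ follow from the same estimates as Lemmas \ref{auu} and \ref{au}, with $\|f_{LR}\|$ replaced by $C_{LR,2}$ and the smallness now supplied by $\delta_1$.

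For the temperature-tensor bound $(\mathcal{C}_i)$ I would re-run Lemma \ref{gl}. In the non-critical range the equivalence (\ref{equiv T}) again reduces matters to two-sided bounds on $T^{n+1}$, with the lower bound supplied by the Cauchy--Schwarz and $a^2-b^2$ argument now anchored on the wall-Maxwellian flux, giving $\gamma_{\ell,2}$. The critical case $\nu=-1/2$ is the heart of the matter: writing $\kappa^{\top}\{\mathcal{T}^{n+1}_{-1/2}\}\kappa=I-II$ as in (\ref{tbt}), the term $I$ is bounded below by $\delta_2 a_{-1/2,2}/\rho^{n+1}$ using the directional energy of $M_w$, while $II\le|U^{n+1}|^2$ must be shown to be small. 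The key observation, and the reason the Remark holds, is that here the net $x$-flux is automatically small without any flux-discrepancy hypothesis: since $\mathcal{S}_L-\mathcal{S}_R=\frac{\delta_1}{2-\delta_1}\big(\int_{v_1<0}f_R|v_1|dv-\int_{v_1>0}f_L|v_1|dv\big)+O(\tau^{-1})$, the quantity $I$ in the analogue of Lemma \ref{23}(1) is $O(\delta_1,\tau^{-1})$, so $|U_1^{n+1}|=O(\delta_1,\tau^{-1})$; the vertical components $U_2^{n+1},U_3^{n+1}$ are controlled as in Lemma \ref{23}(2), using that $M_w$ is even in $(v_2,v_3)$ and that the remaining contributions carry the small factors $\delta_1,\delta_3,\tau^{-1}$.

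I expect the main obstacle to be precisely the propagation of the $\mathcal{S}_L,\mathcal{S}_R$ bounds together with the density lower bound: the correction term in $\mathcal{S}$ is itself built from $\mathcal{M}_{\nu}(f^n)-f^n$, so one must close the estimate by invoking $(\mathcal{B}_i)$--$(\mathcal{D}_i)$ of the inductive hypothesis before they are re-established for $f^{n+1}$, and keeping the $\tau^{-1}$ gains genuine (via the exponential-weight trick of Lemma \ref{main estimate}) is what forces $\tau$ large. The secondary difficulty is ensuring $II<I$ in the critical case when $\delta_2$ is not small but $a_{-1/2,2}$ is only a fixed positive constant, which is where the smallness of $\delta_1$, $\delta_3$ and $\tau^{-1}$ must be quantified against $a_{-1/2,2}$.
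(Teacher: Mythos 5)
Your proposal follows essentially the same route as the paper: the paper likewise reduces everything to a two-sided control of $\mathcal{S}_L(f^n),\mathcal{S}_R(f^n)$ (it proves $\tfrac13\le\mathcal{S}_{L,R}\le1$ for $\delta_1$, $\tau^{-1}$ small, from which $f^{n+1}\ge\tfrac13\delta_2e^{-1/|v_1|}M_w$ yields nonnegativity and the density lower bound), then reruns the inflow-dominant lemmas with the wall Maxwellian as the anchor for $\gamma_{\ell,2}$ and $a_{-1/2,2}$, and in the critical case exploits exactly your observation that the $\delta_2$-contributions to the net $x$-flux enter only through $\mathcal{S}_L-\mathcal{S}_R=O(\delta_1,\tau^{-1})$, so no flux-discrepancy hypothesis is needed. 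The obstacles you flag (closing the induction through $(\mathcal{B}_i)$--$(\mathcal{D}_i)$ and quantifying smallness against $a_{-1/2,2}$) are precisely where the paper's argument spends its effort, so the outline is correct as it stands.
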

We divide the proof into Lemma \ref{geq dff}, \ref{al dff}, \ref{au dff}, and Lemma \ref{gl dff}.
\begin{lemma}\label{geq dff} Let $f^n\in \Omega_3$ or $\Omega_4$. Then, for sufficiently small $\delta_1$ and sufficiently large $\tau$, we have
	\[
	f^{n+1}\geq0.
	\]
\end{lemma}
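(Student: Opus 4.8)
The plan is to treat the bulk (source) contribution and the boundary contribution in the mild scheme (\ref{f+ dff})--(\ref{f- dff}) separately; the only genuinely delicate point is the sign of the diffusive weights $\mathcal{S}_L(f^n)$ and $\mathcal{S}_R(f^n)$. First I would observe that the integral terms $\frac{1}{\tau|v_1|}\int_0^x e^{-\frac{x-y}{\tau|v_1|}}\mathcal{M}_{\nu}(f^n)\,dy$ and its analogue for $v_1<0$ are manifestly nonnegative: since $f^n\in\Omega_3$ or $\Omega_4$, the ellipsoidal Gaussian $\mathcal{M}_{\nu}(f^n)$ is a genuine Gaussian density (its covariance $\mathcal{T}_{\nu}$ is positive definite by $(\mathcal{C}_3)$ or $(\mathcal{C}_4)$ and $\rho^n\geq a_{\ell,2}>0$ by $(\mathcal{B}_3)$ or $(\mathcal{B}_4)$), hence pointwise positive, and the exponential prefactors are positive. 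Thus it remains only to verify that the boundary values $f^{n+1}(0,v)$ for $v_1>0$ and $f^{n+1}(1,v)$ for $v_1<0$ defined in (\ref{f_delta dff}) are nonnegative.

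In those boundary values, $\delta_1 f_L$ and $\delta_1 f_R$ are nonnegative by $(P_1)$, while $\delta_3 f^n(0,Rv)$ and $\delta_3 f^n(1,Rv)$ are nonnegative by $(\mathcal{A}_3)$ or $(\mathcal{A}_4)$. Since the wall Maxwellians satisfy $M_w(0),M_w(1)\geq 0$, the whole question reduces to proving $\mathcal{S}_L(f^n)\geq 0$ and $\mathcal{S}_R(f^n)\geq 0$. In $\mathcal{S}_L(f^n)$, the first term $\frac{1-\delta_1}{2-\delta_1}$ is a fixed positive constant, bounded below away from $0$ once $\delta_1$ is small, and $\frac{\delta_1}{2-\delta_1}\int_{v_1<0}f_R|v_1|\,dv\geq 0$; the only sign-indefinite piece is the error term $-\frac{1}{\tau(2-\delta_1)}\int_{v_1>0}\int_0^1\mathcal{R}^n(y,v)\,dy\,dv$.

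The key estimate, which is also the main (if modest) obstacle, is a uniform-in-$n$ bound on this error term. I would use $|\mathcal{R}^n|\leq \rho^n\{\mathcal{M}_{\nu}(f^n)+f^n\}$ and integrate: since $\int_{\mathbb{R}^3}\mathcal{M}_{\nu}(f^n)\,dv=\int_{\mathbb{R}^3}f^n\,dv=\rho^n$ and $\rho^n\leq 2C_{LR,2}$ by $(\mathcal{B}_3)$ or $(\mathcal{B}_4)$, and using $(2-\delta_1)^{-1}\leq 1$, one obtains
\begin{align*}
\frac{1}{\tau(2-\delta_1)}\Big|\int_{v_1>0}\int_0^1\mathcal{R}^n(y,v)\,dy\,dv\Big|\leq \frac{1}{\tau}\int_0^1 2\{\rho^n(y)\}^2\,dy\leq \frac{C}{\tau},
\end{align*}
with $C$ depending only on $C_{LR,2}$ (Lemma \ref{Max decomposition dff} supplies the pointwise decay of $\mathcal{M}_{\nu}(f^n)$ should it be needed at any intermediate step). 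Consequently $\mathcal{S}_L(f^n)\geq \frac{1-\delta_1}{2-\delta_1}-\frac{C}{\tau}$, which is nonnegative for $\tau$ sufficiently large; the smallness of $\delta_1$ is used precisely to keep the leading constant bounded below uniformly, so that a single threshold on $\tau$ works. The identical computation gives $\mathcal{S}_R(f^n)\geq 0$. Combining the nonnegativity of the boundary data with that of the source terms yields $f^{n+1}\geq 0$, which completes the proof.
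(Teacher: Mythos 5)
Your proof is correct and follows essentially the same route as the paper: both reduce the question to the sign of $\mathcal{S}_L(f^n)$, $\mathcal{S}_R(f^n)$, bound the remainder $\frac{1}{\tau(2-\delta_1)}\int_{v_1\gtrless 0}\int_0^1\mathcal{R}^n\,dy\,dv$ by $C/\tau$ using $|\mathcal{R}^n|\leq \mathcal{M}_{\nu}(f^n)+f^n$ together with the uniform density bounds of $\Omega_3$, $\Omega_4$, and use smallness of $\delta_1$ only to keep $\frac{1-\delta_1}{2-\delta_1}$ bounded away from zero. The only cosmetic difference is that the paper retains the quantitative conclusion $\mathcal{S}_L(f^n),\mathcal{S}_R(f^n)\geq 1/3$, hence $f^{n+1}\geq \frac{1}{3}\delta_2 e^{-1/|v_1|}M_w$, which it reuses in the next lemma, whereas you only record nonnegativity; this does not affect the validity of your argument for the stated claim.
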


\begin{proof}
	Since $f^n\in\Omega_{3}$, we have from Lemma \ref{main estimate}
\begin{align*}
&\mathcal{S}_L(f^n)=\frac{1-\delta_1}{2-\delta_1}+\frac{\delta_1}{2-\delta_1}\int_{v_1>0}f_L|v_1|dv
	-\frac{1}{\tau(2-\delta_1)}\int_{v_1>0}\int^1_0\mathcal{R}^n(y,v)dydv\cr
&\qquad\geq \frac{1-\delta_1}{2-\delta_1}+\delta_1\int_{v_1>0}f_L|v_1|dv
-\frac{1}{\tau}\int_{\rr}\int^1_0\left(\mm(f^n)+f^n\right)(1+|v|^2)dydv\cr
&\qquad\geq \frac{1}{3}
\end{align*}
for sufficiently small $\delta_1$ and $\tau^{-1}$.
Similarly, $\mathcal{S}_R(f^n)\geq 1/3$.
Therefore,
\begin{align*}
f^{n+1}&\geq \frac{1}{3}\delta_2 e^{-\frac{x}{\tau|v_1|}} M_w(0)1_{v_1>0}
+\frac{1}{3}\delta_2
         e^{-\frac{1-x}{\tau|v_1|}} M_w(1)1_{v_1<0}\cr
&\geq \frac{1}{3}\delta_2e^{-\frac{1}{|v_1|}}M_w\geq 0
\end{align*}
	for $v_1>0$.
	The case for $v_1<0$ is the same.
\end{proof}

\begin{lemma} \label{al dff} Assume $f\in \Omega_3$ or $\Omega_4$. Then we have
	\begin{align*}
		\int_{\mathbb{R}^3}f^{n+1}dv\geq \delta_2 a_{\ell,2}.
	\end{align*}
\end{lemma}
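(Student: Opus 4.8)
The plan is to treat this as the diffusive-dominant analogue of Lemma \ref{al}: there the lower bound on the mass came from the inflow term $\delta_1 f_{LR}$, whereas here it must be extracted from the diffusive (wall) contribution $\delta_2\mathcal{S}_L(f^n)M_w$ in the boundary conditions (\ref{f_delta dff}). First I would discard every manifestly nonnegative term. By Lemma \ref{geq dff} we already have $f^{n+1}\ge 0$, and $\mathcal{M}_{\nu}(f^n)\ge 0$, so in the mild representations (\ref{f+ dff})--(\ref{f- dff}) only the boundary factor need be kept:
\[
f^{n+1}(x,v)\ge e^{-\frac{x}{\tau|v_1|}}f^{n+1}(0,v)\,1_{v_1>0}+e^{-\frac{1-x}{\tau|v_1|}}f^{n+1}(1,v)\,1_{v_1<0}.
\]
In the boundary data (\ref{f_delta dff}) I then drop the nonnegative $\delta_1 f_{L/R}$ and $\delta_3 f^n(i,Rv)$ pieces, retaining only $\delta_2\mathcal{S}_L(f^n)M_w(0)$ for $v_1>0$ and $\delta_2\mathcal{S}_R(f^n)M_w(1)$ for $v_1<0$.

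Next I would bound $\mathcal{S}_L(f^n)$ and $\mathcal{S}_R(f^n)$ from below. This is the computation already performed inside the proof of Lemma \ref{geq dff}: using $(\mathcal{B}_i)$--$(\mathcal{D}_i)$ of $\Omega_i$ together with Lemma \ref{main estimate} to absorb the remainder integral $\tfrac{1}{\tau}\int_0^1\!\int \mathcal{R}^n\,dv\,dy$ into an $O(\tau^{-1})$ error, and exploiting smallness of $\delta_1$, one sees that the leading term $\tfrac{1-\delta_1}{2-\delta_1}$ forces $\mathcal{S}_L(f^n),\mathcal{S}_R(f^n)\ge \tfrac12$ up to negligible error, for $\delta_1$ small and $\tau$ large. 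The factor $\tfrac12$ is precisely what the definition (\ref{quantities}) of $a_{\ell,2}$ is calibrated against. Finally, using $x\in[0,1]$ and $\tau>1$, I replace both exponential factors by the common lower bound $e^{-1/|v_1|}$, obtaining
\[
f^{n+1}(x,v)\ge \tfrac12\,\delta_2\, e^{-\frac{1}{|v_1|}}\big(M_w(0)1_{v_1>0}+M_w(1)1_{v_1<0}\big)=\tfrac12\,\delta_2\,e^{-\frac{1}{|v_1|}}M_w,
\]
and integrating in $v$ while invoking $a_{\ell,2}=\tfrac12\int_{\mathbb{R}^3}e^{-1/|v_1|}M_w\,dv$ yields $\int_{\mathbb{R}^3}f^{n+1}\,dv\ge \delta_2 a_{\ell,2}$.

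The routine parts are the discarding of the nonnegative terms and the exponential comparison, exactly as in Lemma \ref{al}. The only genuinely quantitative step—and hence the main obstacle—is the lower bound $\mathcal{S}_L,\mathcal{S}_R\ge\tfrac12$, since unlike in the inflow case the diffusive coefficient is not simply $\delta_2$ but the flux-balancing quantity $\mathcal{S}_{L/R}(f^n)$, which carries the collision remainder $\mathcal{R}^n=\rho^n(\mathcal{M}_{\nu}(f^n)-f^n)$. Controlling that remainder as an $O(\tau^{-1})$ error through Lemma \ref{main estimate}, and keeping $\delta_1$ small, is what pins $\mathcal{S}_{L/R}$ near $\tfrac12$; because this is identical in spirit to the estimate already carried out in Lemma \ref{geq dff}, the remaining work is short.
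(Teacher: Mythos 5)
Your proposal follows essentially the same route as the paper: the paper's proof simply recalls the pointwise bound $f^{n+1}\ge \tfrac13\delta_2 e^{-1/|v_1|}M_w$ already established in the proof of Lemma \ref{geq dff} (where $\mathcal{S}_L(f^n),\mathcal{S}_R(f^n)\ge \tfrac13$ is extracted from the leading term $\tfrac{1-\delta_1}{2-\delta_1}$ using smallness of $\delta_1$ and $\tau^{-1}$ to absorb the $\mathcal{R}^n$ remainder) and then integrates in $v$. The one small caveat is your constant: $\mathcal{S}_{L/R}\ge\tfrac12$ cannot hold exactly since $\tfrac{1-\delta_1}{2-\delta_1}<\tfrac12$ and the remainder is only $O(\tau^{-1})$, while the paper's safer $\tfrac13$ actually integrates to $\tfrac23\delta_2 a_{\ell,2}$ rather than the stated $\delta_2 a_{\ell,2}$ --- a harmless constant mismatch shared by both arguments.
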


\begin{proof}
	We only prove the second one. Recall from the previous proof that
	\begin{align*}
		f^{n+1}\geq
          \frac{1}{3}\delta_2e^{-\frac{1}{|v_1|}}   M_w.
	\end{align*}
	Integrating with respect to $v$, we obtain the desired lower bound.
\end{proof}
\begin{lemma}\label{auu}
	\noindent$(1)$ Let $f^n\in \Omega_1$ or $\Omega_2$. Then we have
	\begin{align*}
	\|f^{n+1}\|_{L^1_{\gamma,|v_1|,+}},~ \|f^{n+1}\|_{L^1_{\gamma,|v_1|,-}}\leq 2\big(1+\|f_{LR}\|_{L^1_{\gamma,|v_1|}}\big).
	\end{align*}
	$(2)$ Let $f^n\in \Omega_1$ or $\Omega_2$. Then we have
	\begin{align*}
	\|f^{n+1}\|_{L^1_{\gamma,\langle v\rangle,+ }},~ \|f^{n+1}\|_{L^1_{\gamma,\langle v\rangle,- }}\leq 2\big(\|f_{LR}\|_{L^1_{\gamma,\langle v\rangle }}+\|M_w\|_{L^1_{\gamma,\langle v\rangle }}\big).
	\end{align*}
\end{lemma}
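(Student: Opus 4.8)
The plan is to repeat the half-space integration used for the analogous inflow-dominant lemma in Section 3, but to run it through the diffusive approximation scheme (\ref{f+ dff})--(\ref{f_delta dff}), whose boundary data now carry the coefficients $\mathcal{S}_L(f^n)$ and $\mathcal{S}_R(f^n)$ in place of the bare diffusive-reflection fluxes. Consequently the only genuinely new ingredient is a uniform bound on $\mathcal{S}_L(f^n)$ and $\mathcal{S}_R(f^n)$; once these are controlled, the remaining steps are the same bookkeeping as before, and the extra constant they contribute is precisely what accounts for the ``$1$'' on the right-hand side of both claimed inequalities.

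First I would show $0\le\mathcal{S}_L(f^n),\,\mathcal{S}_R(f^n)\le \tfrac12+o(1)$ for $\tau$ large and $\delta_1$ small. In the definition of $\mathcal{S}_L(f^n)$ the leading term is $\frac{1-\delta_1}{2-\delta_1}\le\frac12$, the inflow term is at most $\frac{\delta_1}{2-\delta_1}\|f_{LR}\|_{L^1_{\gamma,|v_1|}}$, and the remainder $\frac{1}{\tau(2-\delta_1)}\int_{v_1>0}\int_0^1\mathcal{R}^n\,dy\,dv$ is estimated by writing $\mathcal{R}^n=\rho^n\{\mathcal{M}_{\nu}(f^n)-f^n\}$, bounding $\mathcal{M}_{\nu}(f^n)$ pointwise by $Ce^{-C|v|^2}$ via Lemma \ref{Max decomposition} and bounding $\rho^n$ together with the energy of $f^n$ by property $(\mathcal{B}_i)$; this makes the remainder $O(\tau^{-1})$. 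Hence $\mathcal{S}_L,\mathcal{S}_R\le 1$ and in fact $\mathcal{S}_L(f^n)+\mathcal{S}_R(f^n)\le 1+o(1)$, so that $\delta_2\big(\mathcal{S}_L+\mathcal{S}_R\big)\le 1+o(1)$ since $\delta_2\le1$.

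For the inward trace $\|f^{n+1}\|_{L^1_{\gamma,|v_1|,-}}$ I would integrate the boundary identities (\ref{f_delta dff}) against $|v_1|\,dv$ on the incoming half-spaces, using the normalization $\int_{v_1>0}M_w(0)|v_1|dv=1$ and the substitution $v\mapsto Rv$ in the specular term; the two endpoints sum to $\delta_1\|f_{LR}\|_{L^1_{\gamma,|v_1|}}+\delta_2(\mathcal{S}_L+\mathcal{S}_R)+\delta_3\|f^n\|_{L^1_{\gamma,|v_1|,+}}$, into which I insert the $\mathcal{S}$-bound above and the trace hypothesis $(\mathcal{D}_i)$. For the outward trace $\|f^{n+1}\|_{L^1_{\gamma,|v_1|,+}}$ I would instead use the mild forms (\ref{f+ dff}),(\ref{f- dff}) evaluated at the opposite endpoint, so the outgoing trace splits into a boundary part (again of the $\delta_1 f_{LR}+\delta_2\mathcal{S}\,M_w+\delta_3 f^n$ type, carrying the factor $e^{-1/(\tau|v_1|)}\le1$) and a source integral of the form $\frac{1}{\tau|v_1|}\int_0^1 e^{-y/(\tau|v_1|)}\mathcal{M}_{\nu}(f^n)\,dy$, which Lemma \ref{main estimate} (with Lemma \ref{Max decomposition}) bounds by $C(\ln\tau+1)/\tau$. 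Grouping the $\|f_{LR}\|$-terms via $\delta_1+2\delta_3\le2$ and absorbing $\delta_2(\mathcal{S}_L+\mathcal{S}_R)\le1+o(1)$ together with the $\tau$-error into the slack ``$1$'', both traces close at $2(1+\|f_{LR}\|_{L^1_{\gamma,|v_1|}})$ for $\tau$ large. Part (2) is identical, except that the diffusive term $\delta_2\mathcal{S}\,M_w$ is now weighted by $\langle v\rangle$, which replaces the factor $1$ by $\|M_w\|_{L^1_{\gamma,\langle v\rangle,\pm}}$ and thereby produces the $\|M_w\|_{L^1_{\gamma,\langle v\rangle}}$ appearing on the right; the corresponding source term is handled by the $\langle v\rangle$-weighted analogue of Lemma \ref{main estimate}.

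The main obstacle is the control of $\mathcal{S}_L(f^n),\mathcal{S}_R(f^n)$, and specifically verifying that the $\mathcal{R}^n$-correction is genuinely $O(\tau^{-1})$ uniformly in $n$: this is exactly where the Gaussian pointwise bound of Lemma \ref{Max decomposition} and the uniform density/energy bound $(\mathcal{B}_i)$ enter. One must also check that the resulting $O(1)$ size of $\mathcal{S}$, multiplied by $\delta_2\le1$ (which need not be small in the diffusive-dominant regime), is absorbed by the additive ``$1$'' on the right rather than spoiling the factor $2$. Once this is secured, the half-space integrations are routine repetitions of the inflow-dominant argument.
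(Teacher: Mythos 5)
Your proposal is correct and takes essentially the same route as the paper's own proof of this lemma (the Section 5 version for the diffusive scheme): the paper likewise reduces everything to the uniform bound $\mathcal{S}_L(f^n),\mathcal{S}_R(f^n)\leq 1$ --- obtained from the smallness of $\delta_1$ together with the $O(\tau^{-1})$ control of the $\mathcal{R}^n$-remainder via Lemma \ref{Max decomposition} and property $(\mathcal{B}_i)$ --- and then repeats the inflow-dominant half-space integrations, invoking Lemma \ref{main estimate} for the Gaussian source term, the normalization of $M_w$, and the trace hypothesis $(\mathcal{D}_i)$ to close the bound at $2\big(1+\|f_{LR}\|_{L^1_{\gamma,|v_1|}}\big)$, with the $\langle v\rangle$-weighted variant producing the $\|M_w\|_{L^1_{\gamma,\langle v\rangle}}$ term in part (2). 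Your slightly sharper estimate $\mathcal{S}_L,\mathcal{S}_R\leq\tfrac12+o(1)$ and your explicit absorption bookkeeping are merely refinements of the paper's coarser use of $\mathcal{S}\leq 1$ combined with $\delta_1+\delta_2+\delta_3=1$.
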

\begin{proof} (1) $\bullet$ Estimate for outflux $\|f^{n+1}\|_{L^1_{\gamma,|v_1|,+}}$: 
	Using (\ref{f+ dff}), we can write $f^{n+1}(0,v)$ for $v_1<0$ as
	\begin{align}\label{int22}
	\begin{split}
	f^{n+1}(0,v)&=\delta_1 f_R+\delta_2\mathcal{S}_RM_w(1)+\delta_3 f^n(1,Rv)
	+\frac{1}{\tau|v_1|}\int_{0}^1 e^{-\frac{y}{\tau|v_1|}}
	\mathcal{M}_{\nu}(f^n)dy
	\end{split}
	\end{align}
	which, in view of  Lemma \ref{main estimate}, yields
	\begin{align}\label{f222}
	\begin{split}
	&\int_{v_1<0}f^{n+1}(0,v)|v_1|dv\cr
	&\leq \delta_1\int_{v_1<0}f_R|v_1|dv+\delta_2\int_{v_1>0}M_w(1)|v_1|dv+\delta_3\int_{v_1>0}f^{n}(1,v)|v_1|dv\cr
	&+C_{\ell,u}\left(\frac{\ln \tau+1}{\tau}\right).
	\end{split}
	\end{align}
	Here, we used $\mathcal{S}_L\leq 1$, which follows directly from the smallness of $\delta_1$ and Lemma \ref{main estimate}.
	Similarly,
	\begin{align}\label{f232}
	\begin{split}
	&\int_{v_1>0}f^{n+1}(1,v)|v_1|dv\cr
	&\leq \delta_1\int_{v_1>0}f_L|v_1|dv+\delta_2\int_{v_1<0}M_w(0)|v_1|dv+\delta_3\int_{v_1<0}f^{n}(0,v)|v_1|dv\cr
	&+C_{\ell,u}\left(\frac{\ln \tau+1}{\tau}\right).
	\end{split}
	\end{align}
	From (\ref{f222}) and $(\ref{f232})$, we obtain
	\begin{align}\label{ff112}
	\begin{split}
	\|f^{n+1}\|_{L^1_{\gamma,|v_1|,+}}&=\int_{v_1<0}f^{n+1}(0,v)|v_1|dv+\int_{v_1>0}f^{n+1}(1,v)|v_1|dv\cr
	&\leq \delta_1\|f_{LR}\|_{L^1_{\gamma,|v_1|}}+\delta_2\|M_w\|_{L^1_{\gamma,|v_1|,+}}+\delta_3\|f^n\|_{L^1_{\gamma,|v_1|,+}}\cr
	&+C_{\ell,u}\left(\frac{\ln \tau+1}{\tau}\right)\cr
		&\leq \delta_1\|f_{LR}\|_{L^1_{\gamma,|v_1|}}+\delta_2+\delta_3\|f^n\|_{L^1_{\gamma,|v_1|,+}}\cr
	&+C_{\ell,u}\left(\frac{\ln \tau+1}{\tau}\right).
	\end{split}
	\end{align}
	
	Therefore, in view of $\mathcal{D}_i $ of $\Omega_i$ $(i=1,2)$, we see that
	\begin{align}\label{outflux2}
	\begin{split}
	\|f^{n+1}\|_{L^1_{\gamma,|v_1|,+}}
	&\leq
	\delta_1\|f_{LR}\|_{L^1_{\gamma,|v_1|}}
	+\delta_2+2\delta_3\big(1+\|f_{LR}\|_{L^1_{\gamma,|v_1|}}\big)
	+C_{\ell,u}\left(\frac{\ln \tau+1}{\tau}\right)\cr
	&=2(\delta_1+\delta_2+\delta_3)\big(1+\|f_{LR}\|_{L^1_{\gamma,|v_1|}}\big)-(\delta_1+\delta_2)
	\big(1+\|f_{LR}\|_{L^1_{\gamma,|v_1|}}\big)\cr
	&+C_{\ell,u}\left(\frac{\ln \tau+1}{\tau}\right)\cr
	&\leq 2\big(1+\|f_{LR}\|_{L^1_{\gamma,|v_1|}}\big)
	\end{split}
	\end{align}
	for sufficiently large $\tau$.\newline

	\noindent $\bullet$ Estimate for influx: $\|f^{n+1}\|_{L^1_{\gamma,|v_1|,-}}$: When $v_1>0$, we have from the boundary condition (\ref{f_delta dff}) that
	\begin{align*}
	f^{n+1}(0,v)=\delta_1 f_L+\delta_2\mathcal{S}_LM_w(0)+\delta_3 f^n(0,Rv).
	\end{align*}
	Integrate both sides with respect to $|v_1|dv$ on $v_1>0$ to get
	\begin{align}\label{f1112}
	\begin{split}
	\int_{v_1>0}f^{n+1}(0,v)|v_1|dv= \delta_1\int_{v_1>0}f_L|v_1|dv+\delta_2+\delta_3\int_{v_1>0}f^{n}(0,Rv)|v_1|dv.
	\end{split}
	\end{align}
	where we used $\mathcal{S}_L<1$ and $\int_{v_1>0}M_w(0)|v_1|dv=1$.
	Similarly, we estimate 	
	\begin{align}\label{f1122}
	\begin{split}
	&\int_{v_1<0}f^{n+1}(1,v)|v_1|dv
	= \delta_1\int_{v_1<0}f_R|v_1|dv+\delta_2+\delta_3\int_{v_1>0}f^{n}(1,Rv)|v_1|dv.
	\end{split}
	\end{align}
	Combining (\ref{f1112}) and (\ref{f1122}) gives
	\begin{align}\label{ff11-2}
	\begin{split}
	\|f^{n+1}\|_{L^1_{\gamma,|v_1|,-}}
	&\leq \delta_1\|f_{LR}\|_{L^1_{\gamma,|v_1|}}+\delta_2+\delta_3\|f^n\|_{L^1_{\gamma,|v_1|,+}},
	\end{split}
	\end{align}
	which, thanks to (\ref{outflux2}), gives
	\begin{align*}
	\begin{split}
	\|f^{n+1}\|_{L^1_{\gamma,|v_1|,-}}
	&\leq 2\big(1+\|f_{LR}\|_{L^1_{\gamma,|v_1|}}\big).
	\end{split}
	\end{align*}
	This completes the proof of (1). The proof of (2) is identical. We omit the proof.
\end{proof}
\begin{lemma}\label{au dff} $(1)$ Let $f^n\in \Omega_3$ or $\Omega_4$. For sufficiently large $\tau>0$, we have
	\begin{align*}
		\int_{\mathbb{R}^3}f^{n+1}(1+|v|^2)dv\leq  2\left(\|f_{LR}\|_{L^1_{\gamma,|v_1|}}+\|M_w\|_{L^1_{\gamma,\langle v\rangle}}\right).
	\end{align*}
\end{lemma}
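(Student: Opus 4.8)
The plan is to reproduce the proof of Lemma~\ref{au} almost verbatim, the only substantive change being that the mass injected from the wall is now carried by the functionals $\mathcal{S}_L(f^n)$ and $\mathcal{S}_R(f^n)$ rather than by $\int_{v_1<0}f^n(0,v)|v_1|\,dv$. First I would integrate the mild form (\ref{f+ dff}) against $(1+|v|^2)\,dv$ over $\{v_1>0\}$ and (\ref{f- dff}) against $(1+|v|^2)\,dv$ over $\{v_1<0\}$, splitting each into a boundary (transport) part and a gain part; on $\{v_1>0\}$ this reads
\[
\int_{v_1>0}f^{n+1}(x,v)(1+|v|^2)\,dv
=\int_{v_1>0}e^{-\frac{x}{\tau|v_1|}}f^{n+1}(0,v)(1+|v|^2)\,dv
+\int_{v_1>0}\int_0^x\frac{1}{\tau|v_1|}e^{-\frac{x-y}{\tau|v_1|}}\mathcal{M}_{\nu}(f^n)(1+|v|^2)\,dy\,dv ,
\]
together with the symmetric identity on $\{v_1<0\}$ coming from (\ref{f- dff}).

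For the boundary part I would bound $e^{-x/(\tau|v_1|)}\le 1$ and insert (\ref{f_delta dff}). The key structural fact, established exactly as in Lemma~\ref{geq dff}, is that for $\delta_1$ small and $\tau$ large one has $0\le \mathcal{S}_L(f^n),\mathcal{S}_R(f^n)\le 1$; consequently the diffusive term contributes at most $\delta_2\|M_w\|_{L^1_{\gamma,\langle v\rangle,+}}$, the inflow term contributes $\delta_1\int_{v_1>0}f_L(1+|v|^2)\,dv$, and the specular term, after the change of variables $v\mapsto Rv$, contributes $\delta_3\int_{v_1<0}f^n(0,v)(1+|v|^2)\,dv$, which is a piece of $\|f^n\|_{L^1_{\gamma,\langle v\rangle,+}}$. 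For the gain part I would invoke the pointwise Gaussian domination $\mathcal{M}_{\nu}(f^n)\le Ce^{-C|v|^2}$ from Lemma~\ref{Max decomposition dff}, so that $\mathcal{M}_{\nu}(f^n)(1+|v|^2)\le C'e^{-C'v_1^2}$ after integrating out $v_2,v_3$, and then apply Lemma~\ref{main estimate} to conclude that the whole gain term is $O\!\big((\ln\tau+1)/\tau\big)$ uniformly in $x\in[0,1]$.

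Adding the two half-space estimates and bounding the specular feedback through $(\mathcal{D}_3)$ (resp.\ $(\mathcal{D}_4)$), i.e.\ $\|f^n\|_{L^1_{\gamma,\langle v\rangle,+}}\le 2C_{LR,2}$, I would reach
\[
\int_{\mathbb{R}^3}f^{n+1}(1+|v|^2)\,dv
\le \delta_1\|f_{LR}\|_{L^1_{\gamma,\langle v\rangle}}+\delta_2\|M_w\|_{L^1_{\gamma,\langle v\rangle}}+2\delta_3 C_{LR,2}
+C_{\ell,u}\Big(\frac{\ln\tau+1}{\tau}\Big).
\]
Using that both $\|f_{LR}\|_{L^1_{\gamma,\langle v\rangle}}$ and $\|M_w\|_{L^1_{\gamma,\langle v\rangle}}$ are at most $C_{LR,2}$, the first three terms are bounded by $(\delta_1+\delta_2+2\delta_3)C_{LR,2}$, which I would rewrite via $\delta_1+\delta_2+\delta_3=1$ in the telescoping form $2(\delta_1+\delta_2+\delta_3)C_{LR,2}-(\delta_1+\delta_2)C_{LR,2}=2C_{LR,2}-(\delta_1+\delta_2)C_{LR,2}$, exactly as in (\ref{outflux2}). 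Since $\delta_2>0$ in the diffusive regime, the strictly positive slack $(\delta_1+\delta_2)C_{LR,2}$ absorbs the $O((\ln\tau+1)/\tau)$ error for $\tau$ large, yielding the claimed bound $2C_{LR,2}$. The main obstacle, and the genuine departure from Lemma~\ref{au}, is that here $\delta_2$ and $\delta_3$ need not be small, so the closure cannot rest on smallness of $\delta_2+\delta_3$; instead it relies on the self-limiting bound $\mathcal{S}_L,\mathcal{S}_R\le 1$ (which in turn hinges on the normalization $\|M_w\|_{L^1_{\gamma,|v_1|,\pm}}=1$ and the smallness of $\delta_1$ in $\mathcal{S}_L,\mathcal{S}_R$) together with the partition identity $\delta_1+\delta_2+\delta_3=1$, the smallness of $\delta_1$ being used only to tame the inflow contribution.
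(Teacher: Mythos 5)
The paper omits this proof entirely (``almost identical to Lemma \ref{au}''), and your reconstruction follows exactly the intended route: integrate the mild forms, bound the wall source by the self-limiting estimate $0\le\mathcal{S}_L(f^n),\mathcal{S}_R(f^n)\le1$ (the genuine new ingredient versus Lemma \ref{au}, and the same device the paper uses in its trace-norm lemma), kill the gain term via Lemma \ref{Max decomposition dff} and Lemma \ref{main estimate}, and close with $(\mathcal{D}_3)$/$(\mathcal{D}_4)$ and the partition $\delta_1+\delta_2+\delta_3=1$. The one wobbly step is the claim that $\|f_{LR}\|_{L^1_{\gamma,\langle v\rangle}}\le C_{LR,2}$, which is false under the paper's literal definition $C_{LR,2}=\|f_{LR}\|_{L^1_{\gamma,|v_1|}}+\|M_w\|_{L^1_{\gamma,\langle v\rangle}}$ (since $1+|v|^2\ge 2|v_1|$, the $\langle v\rangle$-norm dominates the $|v_1|$-norm, not the reverse); however, your closing remark that the inflow contribution $\delta_1\|f_{LR}\|_{L^1_{\gamma,\langle v\rangle}}$ is tamed by the smallness of $\delta_1$ is the correct repair, and the discrepancy traces back to the paper's own inconsistent use of $C_{LR,2}$ versus $C_{LM,2}$ rather than to a flaw in your argument.
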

\begin{proof}
The proof is almost identical to Lemma \ref{au}. We omit it.	
\end{proof}
\begin{lemma}\label{23 dff}
	Let $f^n\in\Omega_3$ or $\Omega_4$. \newline
	\noindent$(1)$ For $i=1$, we have
	
	\begin{align*}
		\Big|\int_{\mathbb{R}^3}f^{n+1}v_1dv\Big|
		&\leq\delta_1\left|\int_{v_1>0}f_L|v_1|dv-\int_{v_1<0}f_R|v_1|dv\right|
		+\frac{2}{\tau}C_{LM,2}
	\end{align*}
	where $C_{LR,2}$ denotes
	\[
	C_{LM,2}=\|f_{LR}\|_{L^1_{\gamma,\langle v\rangle}}+\|M_w\|_{L^1_{\gamma,\langle v\rangle}}.
	\]
	\noindent$(2)$ For $i=2,3$, we have
	\begin{align*}
		\Big|\int_{\mathbb{R}^3}f^{n+1}v_idv\Big|\leq 2\delta_3C_{LR,2}
		+
		C\left(\frac{\ln \tau+1}{\tau}\right).
	\end{align*}
\end{lemma}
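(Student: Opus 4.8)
The plan is to reproduce the structure of the proof of Lemma~\ref{23} almost verbatim, replacing the boundary data (\ref{f_delta}) by the reformulated data (\ref{f_delta dff}) and invoking the bounds of $\Omega_3$, $\Omega_4$ in place of those of $\Omega_1$, $\Omega_2$. For part $(1)$ I would first rewrite (\ref{f+ dff}) and (\ref{f- dff}) in the equivalent integrated form obtained by dividing out the exponential, multiply by $v_1$, and integrate over $v_1>0$ and $v_1<0$ separately. Summing the two identities yields the decomposition
\begin{align*}
\int_{\mathbb{R}^3}f^{n+1}v_1\,dv = I + II,
\end{align*}
where $I=\int_{v_1>0}f^{n+1}(0,v)|v_1|\,dv-\int_{v_1<0}f^{n+1}(1,v)|v_1|\,dv$ is the net boundary flux and $II$ is the $\tau^{-1}$-weighted space–time integral of $\mathcal{M}_{\nu}(f^n)-f^{n+1}$. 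Exactly as in Lemma~\ref{23}, $|II|$ is bounded by $\tau^{-1}\int^1_0(\rho^n+\rho^{n+1})\,dy$, which by $(\mathcal{B}_3)$ (resp.\ $(\mathcal{B}_4)$) and Lemma~\ref{au dff} is $O(\tau^{-1}C_{LR,2})$; this produces the $\tfrac{2}{\tau}C_{LM,2}$ in the claim.

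The new content is the evaluation of $I$. Substituting (\ref{f_delta dff}), using the normalization $\int_{v_1>0}M_w(0)|v_1|\,dv=1$ and the change of variables $v\mapsto Rv$ to turn each specular term into an influx, I obtain
\begin{align*}
I = \delta_1\Big(\textstyle\int_{v_1>0}f_L|v_1|\,dv-\int_{v_1<0}f_R|v_1|\,dv\Big) + \delta_2\big(\mathcal{S}_L(f^n)-\mathcal{S}_R(f^n)\big) + \delta_3(p^n-q^n),
\end{align*}
with $p^n=\int_{v_1<0}f^n(0,v)|v_1|\,dv$ and $q^n=\int_{v_1>0}f^n(1,v)|v_1|\,dv$. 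The whole purpose of the reformulation is that in $\mathcal{S}_L-\mathcal{S}_R$ the constants $\tfrac{1-\delta_1}{2-\delta_1}$ cancel, the flux-proportional pieces contribute $-\tfrac{\delta_1}{2-\delta_1}$ times the same flux difference, and what remains is exactly $-\tfrac{1}{\tau(2-\delta_1)}$ times a signed integral of $\mathcal{M}_{\nu}(f^n)-f^n$. Hence the flux difference survives with coefficient $\delta_1\tfrac{1+\delta_3}{2-\delta_1}\le\delta_1$, while the residual is $O(\tau^{-1}C_{LR,2})$ after the $\Omega$-bounds are applied. I expect the genuine obstacle to be the treatment of the two remaining boundary pieces: showing that the $\delta_2$ contribution collapses precisely as above, and controlling the specular discrepancy $\delta_3(p^n-q^n)$ by means of the flux-control relation (\ref{flux c}) baked into $\mathcal{S}_L,\mathcal{S}_R$ together with the trace bounds of $(\mathcal{D}_3)/(\mathcal{D}_4)$, so that these two pieces contribute only a clean $\tau^{-1}$ error and no free $\delta_2,\delta_3$ term. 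This is exactly where the diffusive reformulation departs from the direct inflow boundary of Lemma~\ref{23}.

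For part $(2)$ I would integrate (\ref{f+ dff}) and (\ref{f- dff}) against $v_i\,dv$ for $i=2,3$. The diffusive contribution vanishes because $M_w$ is even in $v_i$, so $\int M_w v_i\,dv=0$; after $v\mapsto Rv$ the specular contribution reduces to $\delta_3\int_{v_1<0}f^n(0,v)v_i\,dv$, bounded by $\delta_3$ times the trace bound in $(\mathcal{D}_3)$ (resp.\ $(\mathcal{D}_4)$), hence by $2\delta_3 C_{LR,2}$; the inflow contribution $\delta_1\int_{v_1>0}f_L v_i\,dv$ is $O(\delta_1 C_{LR,2})$ and is absorbed in the diffusive regime where $\delta_1$ is small. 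The ellipsoidal-Gaussian source term is estimated by Lemma~\ref{Max decomposition dff} followed by Lemma~\ref{main estimate}, yielding the $C(\ln\tau+1)/\tau$ error. Summing the left- and right-wall identities and invoking Lemma~\ref{auu} to bound the traces by $C_{LR,2}$ gives the stated estimate.
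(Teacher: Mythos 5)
Your skeleton is the paper's: the same decomposition $\int_{\mathbb{R}^3}f^{n+1}v_1\,dv=I+II$ with $I$ the net boundary flux and $II$ the $\tau^{-1}$-weighted source, the same bound $|II|\le \tau^{-1}\int_0^1(\rho^n+\rho^{n+1})\,dy\preceq \tau^{-1}C_{LR,2}$ via $(\mathcal{B}_3)/(\mathcal{B}_4)$ and Lemma~\ref{au dff}, and for part $(2)$ the same observation that the diffusive contribution dies because $\int_{\mathbb{R}^2}\mathcal{S}(f^n)M_w v_i\,dv_2dv_3=0$. Your explicit identification of the cancellation inside $\mathcal{S}_L-\mathcal{S}_R$ (the constants $\tfrac{1-\delta_1}{2-\delta_1}$ cancel, the flux terms contribute $-\tfrac{\delta_1}{2-\delta_1}$ times the same difference, the remainder is an $O(\tau^{-1})$ integral of $\mathcal{R}$) is exactly what produces the paper's coefficient $\delta_1+\tfrac{\delta_1\delta_2}{2-\delta_1}$, and your remark that part $(2)$ picks up an extra $O(\delta_1)$ from $\delta_1\int f_Lv_i\,dv$ (since $(P_2)$ is not assumed in Theorem~\ref{Main2}) is a point the paper passes over.

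The genuine gap is the step you yourself flag: the specular piece $\delta_3(p^n-q^n)$ with $p^n=\int_{v_1<0}f^n(0,v)|v_1|\,dv$, $q^n=\int_{v_1>0}f^n(1,v)|v_1|\,dv$. The mechanism you propose cannot reduce it to ``a clean $\tau^{-1}$ error.'' The trace bounds $(\mathcal{D}_3)/(\mathcal{D}_4)$ control $p^n$ and $q^n$ separately by $2(1+\|f_{LR}\|_{L^1_{\gamma,|v_1|}})$, hence give only $|p^n-q^n|=O(1)$; and the flux-control relation (\ref{flux c}) is a constraint on the \emph{sum} $p+q=1$ (and only for the limit $f$, not the iterates), so it says nothing about the difference. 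What one can actually extract is a recursion of the form $q^{n+1}-p^{n+1}=\delta_1(\text{flux diff})+\delta_2(\mathcal{S}_L-\mathcal{S}_R)(f^n)+\delta_3(p^n-q^n)+O((\ln\tau+1)/\tau)$, which would have to be propagated inductively through the solution space to keep $|p^n-q^n|$ small — and no such quantity is built into $\Omega_3$ or $\Omega_4$. Without that, $I$ carries an extra $O(\delta_3)$ term absent from the stated bound. You should be aware that the paper's own proof does no better: its displayed estimate for $|I|$ simply omits the $\delta_3$ contribution without comment, so you have correctly located the weak point of the argument, but your proposed way of closing it does not close it.
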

\begin{proof}
	\noindent$(1)$
Recall from the proof of Lemma \ref{23} that

	\begin{align*}
		\int_{\mathbb{R}^3}f^{n+1}(x,v) v_1dv&=I+II,
	\end{align*}
	where
	\begin{align*}
		I=\int_{v_1>0}f^{n+1}(0,v)|v_1|dv-\int_{v_1<0}f^{n+1}(1,v)|v_1|dv
	\end{align*}
	and
	\begin{align*}
		II&=\int^x_0\int_{v_1>0}\frac{\rho^n}{\tau}\left(\mathcal{M}_{\nu}(f^n)-f^{n+1}\right)dvdy
		+\frac{1}{\tau}\int^1_x\int_{v_1<0}\left(\mathcal{M}_{\nu}(f^n)-f^{n+1}\right)dydv.
	\end{align*}
	$(a)$ The estimate of $I$: 
	We observe from our boundary condition that
	\begin{align*}
		|I|&\leq \left(\delta_1+\frac{\delta_1\delta_2}{2-\delta_1}\right)\left|\int_{v_1>0}f_L|v_1|dv-\int_{v_1<0}f_R|v_1|dv\right|\cr
		&+\frac{1}{\tau(2-\delta_2)}\int_{\rrr}\int^1_0\left\{\mm(f^n)+f^n\right\}(1+|v|^2)dydv\cr
			&\leq2\delta_1\left|\int_{v_1>0}f_L|v_1|dv-\int_{v_1<0}f_R|v_1|dv\right|+\frac{2}{\tau}C_{LM,2}.
	\end{align*}
	In the last line, we used Lemma \ref{auu}.\newline
	
	\noindent$(b)$ The estimate for $II$: The argument for this part is identical
	except that we use Lemma \ref{au dff} instead of Lemma \ref{au}.
	Now, we combine $(a)$ and $(b)$ to obtain the desired result.\newline\newline
	\noindent (2) The proof is identical to the inflow dominant case, since the $\delta_2$ contribution vanishes:
	\begin{align*}
		\int_{\mathbb{R}^2}\mathcal{S}(f^{n})M_wv_2dv_2dv_3=0.
	\end{align*}
	We omit the proof.
\end{proof}
\begin{lemma}\label{gl dff} $(1)$ Let $-1/2<\nu<1$. Assume $f^n\in\Omega_3$.  Then, for sufficiently large $\tau$, we have
	\begin{align*}
		C^1_{\nu}\delta_2^2\frac{\gamma_{\ell,2}}{27C_{LR,2}^2}\leq\kappa^{\top}\left\{\mathcal{T}^{n+1}_{\nu}\right\}\kappa\leq \frac{2 }{{3a_{\ell,2}}}C^2_{\nu}C_{LM,2}
	\end{align*}
	
	\noindent$(2)$ Let $\nu=-1/2$ and $f\in \Omega_4$. Then, for sufficiently large $\tau$, we have
	\begin{align*}
		\delta_1\frac{a_{-1/2}}{4C_{LR,2}}\leq\kappa^{\top}\left\{\mathcal{T}^{n+1}_{-1/2}\right\}\kappa\leq \frac{3}{2a_{\ell,2}}C_{LR,2}
	\end{align*}
	for any $\kappa\in\mathbb{R}$ and $|\kappa|=1$. We recall that 
	\[
	C_{LM,2}=\|f_{LR}\|_{L^1_{\gamma,\langle v\rangle}}+\|M_w\|_{L^1_{\gamma,\langle v\rangle}}.
	\]
\end{lemma}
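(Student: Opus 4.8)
The plan is to follow the proof of Lemma~\ref{gl} in structure, making two systematic substitutions: every appeal there to the pointwise inflow lower bound $f^{n+1}\geq\delta_1 e^{-1/|v_1|}f_{LR}$ is replaced by the diffusive pointwise lower bound
\[
f^{n+1}\geq\tfrac{1}{3}\delta_2\, e^{-1/|v_1|}M_w
\]
established in Lemma~\ref{geq dff} and Lemma~\ref{al dff}, and the constants $a_{\ell,1},\gamma_{\ell,1},a_{-1/2,1},C_{LR,1}$ are replaced by their diffusive counterparts $a_{\ell,2},\gamma_{\ell,2},a_{-1/2,2}$ and $C_{LR,2}$ (or $C_{LM,2}$). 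The density lower bound $\rho^{n+1}\geq a_{\ell,2}$ from $(\mathcal{B}_3)$--$(\mathcal{B}_4)$ and the energy bound of Lemma~\ref{au dff} supply all scalar controls, while the velocity controls come from Lemma~\ref{23 dff}.

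For part (1) ($-1/2<\nu<1$) I would first invoke the equivalence estimate (\ref{equiv T}), reducing the claim to two-sided bounds on the scalar temperature $T^{n+1}$. The upper bound is immediate from $T^{n+1}\leq\frac{1}{3\rho^{n+1}}\int f^{n+1}|v|^2dv$ together with $\rho^{n+1}\geq a_{\ell,2}$ and the energy bound $\leq 2C_{LM,2}$, giving $C^2_\nu\cdot\frac{2C_{LM,2}}{3a_{\ell,2}}$. For the lower bound I would run the Cauchy--Schwarz argument of Lemma~\ref{gl}: using $|v|\geq|v_1|$,
\[
3\{\rho^{n+1}\}^2 T^{n+1}\geq\Big(\int f^{n+1}|v_1|dv\Big)^2-\Big(\int f^{n+1}v_1\,dv\Big)^2-R,
\]
where $R$ collects the off-diagonal velocity products. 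The factored main term equals $4\big(\int_{v_1>0}f^{n+1}|v_1|dv\big)\big(\int_{v_1<0}f^{n+1}|v_1|dv\big)$, which the diffusive lower bound bounds below by $4\cdot\frac{1}{9}\delta_2^2\gamma_{\ell,2}$; the extra factor $\frac19$ coming from $(\tfrac13)^2$ is precisely what turns the denominator $3C_{LR,1}^2$ of the inflow case into $27C_{LR,2}^2$. The remainder $R$ is $O(\delta_3,(\ln\tau+1)\tau^{-1})$ by Lemma~\ref{23 dff} and is absorbed for large $\tau$.

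For part (2) ($\nu=-1/2$) the equivalence (\ref{equiv T}) degenerates, so I would use the identity of Lemma~\ref{gl}(2), writing $\kappa^{\top}\{\mathcal{T}^{n+1}_{-1/2}\}\kappa=I+II$ with $I=\frac{1}{\rho^{n+1}}\int f^{n+1}\{|v|^2-(v\cdot\kappa)^2\}dv$ the directional local energy and $II=-\frac{1}{\rho^{n+1}}\{\rho^{n+1}|U^{n+1}|^2-\rho^{n+1}(U^{n+1}\cdot\kappa)^2\}\leq0$. The upper bound drops $II$ and bounds $I$ by $\frac{1}{\rho^{n+1}}\int f^{n+1}|v|^2dv$. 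For the lower bound, the diffusive pointwise bound yields $\rho^{n+1}I\geq\frac13\delta_2\,a_{-1/2,2}$ with $a_{-1/2,2}$ as in (\ref{a2}), after which it remains to show $|II|$ is negligible, via $|II|\leq\frac{1}{\rho^{n+1}}\sum_i|\int f^{n+1}v_idv|^2$ and Lemma~\ref{23 dff}.

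The main obstacle, and the genuine point of departure from the inflow case, is controlling $II$ in the critical regime \emph{without} any smallness hypothesis on the flux discrepancy $\left|\int_{v_1>0}f_L|v_1|dv-\int_{v_1<0}f_R|v_1|dv\right|$. In the inflow case that smallness was imposed and used to kill the $|U_1|^2$ contribution; here no such assumption is available. The resolution is that Lemma~\ref{23 dff}(1) bounds $|\int f^{n+1}v_1dv|$ by $\delta_1\times(\text{flux discrepancy})+O(\tau^{-1})$, and since the diffusive dominant regime assumes $\delta_1$ small, the whole $v_1$-contribution is $O(\delta_1,\tau^{-1})$ regardless of the size of the discrepancy, while the $v_2,v_3$-contributions are $O(\delta_3,(\ln\tau+1)\tau^{-1})$ by Lemma~\ref{23 dff}(2). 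Hence $II=O(\delta_1^2,\delta_3^2,\tau^{-2})$, and choosing $\delta_1,\delta_3,\tau^{-1}$ small gives $\kappa^{\top}\{\mathcal{T}^{n+1}_{-1/2}\}\kappa\geq\frac{1}{2\rho^{n+1}}\cdot\frac13\delta_2\,a_{-1/2,2}$, which with $\rho^{n+1}\leq 2C_{LR,2}$ yields the stated lower bound. This is exactly the phenomenon recorded in the Remark following Theorem~\ref{Main2}.
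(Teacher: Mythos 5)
Your proposal follows the same route as the paper: the paper's proof of Lemma \ref{gl dff} explicitly reduces to the argument of Lemma \ref{gl}, replacing the inflow lower bound by $f^{n+1}\geq\tfrac13\delta_2 e^{-1/|v_1|}M_w$ (whence the $\gamma_{\ell,2}$, $a_{-1/2,2}$ constants and the extra factor $1/9$ you identify), and disposes of the flux-discrepancy term in $II$ exactly as you do, via Lemma \ref{23 dff} and the smallness of $\delta_1$ in the diffusive dominant regime. Your bookkeeping of the constants ($4\cdot\tfrac19\delta_2^2\gamma_{\ell,2}$ producing the $27$ in the denominator) is in fact slightly more careful than the paper's own display.
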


\begin{proof}
	(1) The proof is identical to the inflow dominant case, except for the computation of $I$,
	 where we bound it from below using $\delta_2$ and $\gamma_{\ell,2}$, instead of using 
	 $\delta_1$ and $\gamma_{\ell,1}$.
	\begin{align*}
	I
	&=4\left\{\int_{v_1>0}f^{n+1}|v_1|dv\right\}\left\{\int_{v_1<0}f^{n+1}|v_1|dv\right\}\cr
	&\geq\delta_2^2\left(\int_{v_1>0}e^{-\frac{1}{|v_1|}}M_w(0)|v_1|dv\right)
	\left(\int_{v_1<0}e^{-\frac{1}{|v_1|}}M_w(1)|v_1|dv\right)\cr
	&=\delta^2_2\gamma_{\ell,2}.
	\end{align*}
	In the last line, we used  
	\begin{align*}
	f^{n+1}&\geq \frac{1}{3}\delta_2e^{-\frac{1}{\tau|v_1|}}M_w(0)1_{v_1>0}+\frac{1}{3}\delta_2
	e^{-\frac{1}{\tau|v_1|}}M_w(1)1_{v_1<0}\cr
	\end{align*}
	and $\tau>1$.
\newline
	%
	%
	\noindent(2) We recall from the inflow dominant case that
	\begin{align}\label{tbt dff}
	\begin{split}
	&\rho^{n+1}\kappa^{\top}\left\{\mathcal{T}^{n+1}_{-1/2}\right\}\kappa\cr
	&\quad=\int_{\mathbb{R}^3}f^{n+1}\left\{|v|^2-(v\cdot\kappa)^2\right\}dv-\left\{\rho^{n+1}
	|U^{n+1}|^2-\rho^{n+1} (U^{n+1}\cdot \kappa)^2\right\}\cr
	&\quad\equiv I+II,
	\end{split}
	\end{align}
	for $|\kappa|=1$\newline
	
	\noindent$(i)$ Upper bound: We have from Lemma \ref{au dff} that
	\begin{align*}
	\kappa^{\top}\left\{\mathcal{T}^{n+1}_{-1/2}\right\}\kappa\leq \frac{1}{\rho^{n+1}}\int_{\mathbb{R}^3}f^{n+1}|v|^2dv\leq\frac{1}{a_{\ell,2}\delta_2}
	\left(1+\|f_{LR}\|_{L^1_{\gamma,\langle v\rangle}}\right)\|M_w\|_{L^1_{\gamma,\langle v\rangle}}.
	\end{align*}
	\noindent$(ii)$ Lower bound: For this, we estimate the lower bound of $I$ and the smallness of $II$:\newline
	\noindent$(ii$-$a)$ Lower bound of $I$: The proof is the same, except that we bound it using $a_{-1/2,2}$ this time:
	\begin{align*}
	I
	\geq\frac{1}{3}\delta_2\inf_{|\kappa|=1}\int_{\mathbb{R}^3}e^{-\frac{1}{|v_1|}}
		M_w\left\{|v|^2-(v\cdot\kappa)^2\right\}dv
		=\frac{1}{3}\delta_2 a_{-1/2,2}.
	\end{align*}

	\noindent$(ii$-$b)$ Smallness of of $II$: The estimate for this case is the same either, except that we use Lemma  \ref{23 dff}, instead of Lemma \ref{23}: 
	\begin{align*}
	II&\leq \frac{|\rho^{n+1} U^{n+1}|^2}{\rho^{n+1}}
	\leq\frac{1}{a_{\ell,1}}\left|\int_{\mathbb{R}^3}f^{n+1}vdv\right|^2
	\leq \frac{1}{a_{\ell,1}}\sum_{i=1}^3\left|\int_{\mathbb{R}^3}f^{n+1}v_idv\right|^2\cr
	&\leq4\delta_1^2\left|\int_{v_1>0}f_L|v_1|dv-\int_{v_1<0}f_R|v_1|dv\right|^2+
	O\left(\delta_3,\tau^{-1}\right).
	\end{align*}
The by exactly the same argument, we get the desired result. Note that, since we can take $\delta_1$ arbitrariliy small in this case, we don't need to assume that the discrepancy of the flux from the inflow data is small.
\end{proof}
\subsection{Cauchy estimate for $f^n$}

\begin{proposition}\label{Lipshitz dff} Let $f$, $g$ be elements of $\Omega_3$ $(-1/2<\nu<1)$ or  $\Omega_4$ $(\nu=-1/2)$. Then the non-isotropic Gaussian $\mathcal{M}_{\nu}$ satisfies 
	\begin{eqnarray*}
		|\mathcal{M}_{\nu}(f)-\mathcal{M}_{\nu}(g)|\leq C\sup_x\|f-g\|_{L^{1}_2}
		e^{-C|v|^2}.
	\end{eqnarray*}
\end{proposition}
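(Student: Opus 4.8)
The plan is to follow the proof of Proposition \ref{Lipshitz} essentially verbatim, the only change being that every controlling constant now comes from the defining properties of $\Omega_3$ and $\Omega_4$ rather than of $\Omega_1$ and $\Omega_2$. As before, the case $-1/2<\nu<1$ is covered in \cite{Bang Y}, so I treat only the critical case $\nu=-1/2$, fixing $f,g\in\Omega_4$. First I would write $\mathcal{M}_{\nu}(f)-\mathcal{M}_{\nu}(g)$ as an integral along the segment joining the macroscopic fields of $f$ and $g$, exactly as in (\ref{turnbackto}), producing the three contributions
\[
(\rho_{f}-\rho_{g})I_1+(U_{f}-U_{g})I_2+(\mathcal{T}_{f}-\mathcal{T}_{g})I_3,
\]
where $I_1,I_2,I_3$ are the $\theta$-averages of $\partial_{\rho}\mathcal{M}_{\nu}$, $\partial_{U}\mathcal{M}_{\nu}$ and $\partial_{\mathcal{T}_{ij}}\mathcal{M}_{\nu}$ evaluated at the transitional fields $(\rho_{\theta},U_{\theta},\mathcal{T}_{\theta})$. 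The decisive observation, identical to that in Proposition \ref{Lipshitz}, is that each transitional field is a convex combination of the fields of $f$ and $g$; hence every bound in $(\mathcal{A}_4)$--$(\mathcal{D}_4)$ transfers to $(\rho_{\theta},U_{\theta},\mathcal{T}_{\theta})$ unchanged, which is what legitimizes the pointwise derivative estimates below.

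Next I would bound the three derivatives, invoking Lemma \ref{Max decomposition dff} for the Gaussian decay of $\mathcal{M}_{\nu}(\theta)$. For $I_1$ I use $\partial_{\rho}\mathcal{M}_{\nu}=\mathcal{M}_{\nu}/\rho_{\theta}$ together with the density lower bound of $(\mathcal{B}_4)$. For $I_2$ I first estimate $|\mathcal{T}_{\theta}^{-1}(v-U_{\theta})|\leq C(1+|v|^2)$ by polarizing the quadratic form as in (\ref{similar}) and invoking the lower bound on $\kappa^{\top}\{\mathcal{T}_{-1/2}\}\kappa$ supplied by $(\mathcal{C}_4)$, namely $\delta_2 a_{-1/2,2}/(2C_{LR,2})$; combined with the decay this gives $|I_2|\leq Ce^{-C|v|^2}$. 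For $I_3$ I differentiate both the determinant and the quadratic exponent: the quadratic part is controlled by $(1+|v|^2)$ through the same bound on $\mathcal{T}_{\theta}^{-1}(v-U_{\theta})$, while the determinant part requires the upper bound $|\mathcal{T}_{\theta,ij}|\leq C$ (again by polarization and $(\mathcal{C}_4)$) together with the determinant lower bound $\det\mathcal{T}_{\theta}\geq(\delta_2 a_{-1/2,2}/(2C_{LR,2}))^3$ that follows from the eigenvalue control in $(\mathcal{B}_4)$ and $(\mathcal{C}_4)$. Absorbing the resulting polynomial growth into the Gaussian yields $|I_3|\leq Ce^{-C|v|^2}$.

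Finally I would convert the differences of the macroscopic fields into $\sup_x\|f-g\|_{L^1_2}$: directly for $\rho$, and for $U$ and $\mathcal{T}$ by splitting $U_{f}-U_{g}=\rho_f^{-1}(\rho_fU_f-\rho_gU_g)-\rho_f^{-1}(\rho_f-\rho_g)U_g$ and analogously for $\mathcal{T}$, using the density lower bound of $(\mathcal{B}_4)$ and the fact that $|U_g|$ and $|\mathcal{T}_g|$ are themselves bounded in $\Omega_4$. Combining these with the pointwise bounds $|I_1|,|I_2|,|I_3|\leq Ce^{-C|v|^2}$ gives the claim. The one step deserving genuine care is the $I_3$ estimate: as in the inflow case (cf. (\ref{hence})) the constant degenerates like a negative power of the relevant small boundary parameter, but here that parameter is $\delta_2$, which is \emph{not} small in the diffusive dominant regime, so the degeneracy is harmless and the final Lipschitz constant may legitimately depend on $\delta_2$.
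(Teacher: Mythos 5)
Your proposal is correct and follows exactly the route the paper intends: the paper's own proof of Proposition \ref{Lipshitz dff} simply states that it is almost identical to that of Proposition \ref{Lipshitz}, and your argument is precisely that adaptation, with the bounds from $(\mathcal{B}_4)$ and $(\mathcal{C}_4)$ (constants degenerating in $\delta_2$ rather than $\delta_1$) replacing those of $\Omega_2$. Your closing remark that the $\delta_2$-degeneracy is harmless in the diffusive dominant regime is a correct and worthwhile observation that the paper leaves implicit.
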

\begin{proof}
The proof is almost identical with the one given for Proposition \ref{Lipshitz}. We omit it.
\end{proof}

\begin{proposition}\label{fixed point 2 dff}
	Suppose $f^{n},\, f^{n+1}\in\Omega_{i}$ $(i=3,4)$. Then, under the assumption of Theorem \ref{Main2}, we have 
	\begin{align*}
		&\sup_{x}\|f^{n+1}-f^{n}\|_{L^1_{2}}+\|f^{n+1}-f^n\|_{L^1_{\gamma, |v_1|}}
		+\|f^{n+1}-f^n\|_{L^1_{\gamma, \langle v\rangle}}\cr
		 &\hspace{0.6cm}\preceq \left(\frac{\ln\tau+1+\delta_2}{\tau}\right)\sup_{x}\|f^{n}-f^{n-1}\|_{L^1_2}
		 +\delta_3\|f^{n}-f^{n-1}\|_{L^1_{\gamma, |v_1|}}+\delta_3\|f^{n}-f^{n-1}\|_{L^1_{\gamma, \langle v\rangle}}.
	\end{align*}
\end{proposition}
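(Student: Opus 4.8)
The plan is to run the proof of Proposition~\ref{fixed point 2} essentially verbatim, splitting $f^{n+1}-f^n$ into a boundary contribution (call it $I$) and a mild-integral contribution (call it $II$) and estimating each in the three norms $\|\cdot\|_{L^1_{\gamma,|v_1|}}$, $\|\cdot\|_{L^1_{\gamma,\langle v\rangle}}$ and $\sup_x\|\cdot\|_{L^1_2}$. The integral part $II$ is unchanged from the inflow case, since the mild formulas (\ref{f+ dff}), (\ref{f- dff}) coincide with (\ref{f+}), (\ref{f-}): using Proposition~\ref{Lipshitz dff} to bound $|\mathcal{M}_{\nu}(f^n)-\mathcal{M}_{\nu}(f^{n-1})|$ by $C\sup_x\|f^n-f^{n-1}\|_{L^1_2}\,e^{-C|v|^2}$ and then invoking Lemma~\ref{main estimate}, one picks up exactly $\left(\frac{\ln\tau+1}{\tau}\right)\sup_x\|f^n-f^{n-1}\|_{L^1_2}$. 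Thus the only novelty lies in the boundary term, where the diffusive operators $\mathcal{S}_L,\mathcal{S}_R$ replace the direct flux integrals of the inflow problem.

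The decisive observation is that $\mathcal{S}_L(f^n)$ depends on $f^n$ only through its last, $\tau^{-1}$-weighted term: the constant $\frac{1-\delta_1}{2-\delta_1}$ and the inflow flux $\frac{\delta_1}{2-\delta_1}\int_{v_1<0}f_R|v_1|\,dv$ carry no $n$-dependence and cancel in the difference. Hence
\begin{align*}
\mathcal{S}_L(f^n)-\mathcal{S}_L(f^{n-1})
=-\frac{1}{\tau(2-\delta_1)}\int_{v_1>0}\int^1_0\big(\mathcal{R}^n-\mathcal{R}^{n-1}\big)\,dy\,dv .
\end{align*}
Writing $\mathcal{R}^n-\mathcal{R}^{n-1}$ in terms of $\mathcal{M}_{\nu}(f^n)-\mathcal{M}_{\nu}(f^{n-1})$ and $f^n-f^{n-1}$ (the bounded factor $\rho^n$ being controlled by $(\mathcal{B}_i)$), Proposition~\ref{Lipshitz dff} and Lemma~\ref{Max decomposition dff} give
\begin{align*}
\big|\mathcal{S}_L(f^n)-\mathcal{S}_L(f^{n-1})\big|\leq \frac{C}{\tau}\sup_x\|f^n-f^{n-1}\|_{L^1_2},
\end{align*}
and likewise for $\mathcal{S}_R$. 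This is precisely the mechanism that converts what in the inflow case was a $(\delta_2+\delta_3)$-coefficient multiplying a trace norm into a $\frac{\delta_2}{\tau}$-coefficient multiplying $\sup_x\|\cdot\|_{L^1_2}$, producing the combined factor $\frac{\ln\tau+1+\delta_2}{\tau}$ in the statement.

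With this in hand the boundary term is routine. For $v_1>0$ one has
\begin{align*}
f^{n+1}(0,v)-f^n(0,v)=\delta_2\big(\mathcal{S}_L(f^n)-\mathcal{S}_L(f^{n-1})\big)M_w(0)+\delta_3\big\{f^n(0,Rv)-f^{n-1}(0,Rv)\big\},
\end{align*}
so the $\delta_1 f_L$ term drops out; the $\delta_2$ piece is controlled by $\frac{C\delta_2}{\tau}\sup_x\|f^n-f^{n-1}\|_{L^1_2}$ via the bound above (using the normalization of $M_w$), while the $\delta_3$ reflection piece contributes $\delta_3$ times the corresponding trace difference, exactly as in Proposition~\ref{fixed point 2}. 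Assembling the two ends of the slab and summing the $I$ and $II$ contributions in each norm yields
\begin{align*}
\|f^{n+1}-f^n\|_{L^1_{\gamma,|v_1|}}\preceq\left(\frac{\ln\tau+1+\delta_2}{\tau}\right)\sup_x\|f^n-f^{n-1}\|_{L^1_2}+\delta_3\|f^n-f^{n-1}\|_{L^1_{\gamma,|v_1|}},
\end{align*}
together with its analogues in $\|\cdot\|_{L^1_{\gamma,\langle v\rangle}}$ and $\sup_x\|\cdot\|_{L^1_2}$, which combine to the claimed estimate.

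The main obstacle is the sharp bookkeeping in the second step: one must verify that every occurrence of $\delta_2$ in the boundary data is attached to the $\tau^{-1}$-weighted $\mathcal{R}$-term and therefore improves to $\delta_2/\tau$, rather than leaving a bare $\delta_2$ on a trace norm as in the inflow problem. This relies on the exact algebraic form of $\mathcal{S}_L,\mathcal{S}_R$ (in particular that $\mathcal{S}_L(f^n)$ is affine in $f^n$ with the only $n$-dependent part carrying the $\frac{1}{\tau(2-\delta_1)}$ prefactor) and on the uniform control of the macroscopic fields over $\Omega_i$ through $(\mathcal{B}_i)$--$(\mathcal{D}_i)$, which keeps the velocity integrals of $\mathcal{M}_{\nu}(f^n)-\mathcal{M}_{\nu}(f^{n-1})$ and of $f^n-f^{n-1}$ finite and $\sup_x\|\cdot\|_{L^1_2}$-bounded after applying Proposition~\ref{Lipshitz dff}.
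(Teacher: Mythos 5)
Your proposal is correct and follows essentially the same route as the paper: the paper's proof likewise isolates the boundary difference, observes that the only $n$-dependence of $\mathcal{S}_L,\mathcal{S}_R$ sits in the $\frac{1}{\tau(2-\delta_1)}\int\!\!\int\mathcal{R}$ term, and uses Proposition \ref{Lipshitz dff} to bound $|\mathcal{S}_L(f^n)-\mathcal{S}_L(f^{n-1})|\leq \frac{C}{\tau}\sup_x\|f^n-f^{n-1}\|_{L^1_2}$, which is exactly how the $\delta_2$ contribution acquires the extra $\tau^{-1}$ and merges into the $\frac{\ln\tau+1+\delta_2}{\tau}$ prefactor. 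The remaining $\delta_3$ reflection and mild-integral terms are handled verbatim as in Proposition \ref{fixed point 2}, just as you describe.
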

\begin{remark}
We note that, unlike in Proposition \ref{fixed point 2}, $K$ does not have $\|f_{LR}|v|^{-1}\|_{L^1_{\gamma,\langle v\rangle}}$ term in this case. This is why we don't need the no-concentration assumption ($P_2$) in Theorem \ref{Main2}.
\end{remark}
\begin{proof}
We only consider the boundary terms in  $\|\cdot\|_{L^1_{\gamma,|v_1|}}$ estimate. We note from our boundary condition that, for $v_1>0$
	\begin{align}\label{sum1 dff}
		\begin{split}
			&\int_{v_1>0}|f^{n+1}(0,v)-f^n(0,v)||v_1|dv\cr
			&\hspace{1.2cm}\leq\delta_2|\mathcal{S}^+(f^n)-\mathcal{S}(f^{n-1})|\int_{v_1>0}M_w(0)|v_1|dv\cr
			&\hspace{1.2cm}+\delta_3\int_{v_1>0}|f^n(0,Rv)-f^{n-1}(0,Rv)||v_1|dv\cr
			&\hspace{1.2cm}\leq C\frac{\delta_2}{\tau}\sup_x\|f^n-f^{n-1}\|_{L^1_2}
			+\delta_3\int_{v_1<0}|f^n(0,v)-f^{n-1}(0,v)||v_1|dv,
		\end{split}
	\end{align}
where we used Proposition \ref{Lipshitz dff} as
\begin{align*}
	\begin{split}
		|\mathcal{S}^+(f^{n})-\mathcal{S}(f^n)|&=
		\frac{1}{\tau(2-\delta_1)}\int_{v_1>0}\int^1_0\left|\mathcal{R}^n(y,v)-\mathcal{R}^{n-1}(y,v)\right|dydv\cr
		&\leq \frac{C}{\tau}\sup_x\|f^n-f^{n-1}\|_{L^1_2}.
	\end{split}
\end{align*}

	On the other hand, for $v_1<0$, we have from (\ref{f- dff})
	\[
	f^{n+1}(0,v)=I(f^n)+II(f^n),
	\]
	where 
	\begin{eqnarray*}
		I(f)=e^{-\frac{1}{\tau|v_1|}\int^x_0\rho_{f}(y)dy}f(1,v),\quad
		II(f)=\frac{1}{\tau|v_1|}\int^x_0e^{-\frac{x-y}{\tau|v_1|}}\mathcal{M}_{\nu}(h)dy.
	\end{eqnarray*}

Since
	\begin{align*}
		I(f^{n})-I(f^{n-1})
		&+\delta_2 e^{-\frac{x}{\tau|v_1|}}
		\left\{\mathcal{S}^-(f^n)-\mathcal{S}^-(f^{n-1})\right\}M_w(1)\cr
		&+\delta_3 e^{-\frac{x}{\tau|v_1|}}\left\{f^n(1,Rv)-f^{n-1}(1,Rv)\right\},
	\end{align*}

	we have 
	\begin{align}\label{sum2 dff}
		\begin{split}
			&\int_{v_1<0}|I(f^{n})-I(f^{n-1})|v_1|dv\cr
			&\qquad\leq C\frac{\delta_2}{\tau}\sup_x\|f^n-f^{n-1}\|_{L^1_2}+\delta_3\left\{\int_{v_1>0}|f^n(1,v)-f^{n-1}(1,v)||v_1|dv\right\}.
		\end{split}
	\end{align}
Now, through an almost identical computations as in the inflow dominant case, we get the following estimates:
	\begin{align}\label{final1 dff}
		\begin{split}
			\|f^{n+1}-f^{n}\|_{L^1_{\gamma,|v_1|}}
			&\preceq \left(\frac{\ln\tau+1+\delta_2}{\tau}\right)\sup_{x}\|f^{n}-f{n-1}\|_{L^1_{2}}
			+\delta_3\|f^{n}-f^{n-1}\|_{L^1_{\gamma,|v_1|}}.
		\end{split}
	\end{align}
Estimates in $\|\cdot\|_{L^1_{\gamma,\langle v\rangle}}$ and  $\|\cdot\|_{L^1_{2}}$ can be obtained similarly:
	\begin{align}\label{final2 dff}
		\begin{split}
			\|f^{n+1}-f^{n}\|_{L^1_{\gamma,\langle v\rangle}}
			&\preceq\left(\frac{\ln\tau+1+\delta_2}{\tau}\right)\sup_{x}\|f^{n}-f^{n-1}\|_{L^1_{2}}			
			+\delta_3C\|f^{n}-f^{n-1}\|_{L^1_{\gamma,\langle v\rangle}}
		\end{split}
	\end{align}
and
	\begin{align}\label{final3 dff}
		\begin{split}
			\|f^{n+1}-f^{n}\|_{L^1_2}
			&\preceq \left(\frac{\ln\tau+1+\delta_2}{\tau}\right)\sup_{x}\|f^{n}-f^{n-1}\|_{L^1_{2}}
		+\delta_3\|f^{n}-f^{n-1}\|_{L^1_{\gamma,\langle v\rangle}}.
		\end{split}
	\end{align}
	The estimates (\ref{final1 dff}), (\ref{final2 dff}) and (\ref{final3 dff}) give the desired result.
\end{proof}

\noindent{\bf Acknowledgement}\newline
Part of the this work is done while S.-B. Yun was visiting 
the institute of Mathematics at University of Bordeaux.
S.-B. Yun would like to acknowledge the hospitality of the institute.
St\'ephane Brull is supported by the French-Korean IRL FK maths.
Seok-Bae Yun is supported by Samsung Science and Technology Foundation under Project Number SSTF-BA1801-02.

%
%
%
%

\bibliographystyle{amsplain}

\end{document}